\setlist[enumerate, 1]{label=(\arabic*),leftmargin=4em}
\DeclareMathOperator{\res}{res}
\DeclareMathOperator{\ddeg}{ddeg}
\DeclareMathOperator{\ndeg}{ndeg}
\DeclareMathOperator{\tp}{tp}
\DeclareMathOperator{\qftp}{qftp}
\DeclareMathOperator{\conv}{conv}
\DeclareMathOperator{\supp}{supp}
\def\d{\operatorname{d}}
\def\nl{\operatorname{nl}}
\def\dhl{\operatorname{dhl}}
\def\sm{\operatorname{small}}
\def\lift{\operatorname{lift}}
\def\pair{\operatorname{pair}}
\def\r{\operatorname{r}}
\def\sp{\operatorname{sp}}
\def\LE{\operatorname{LE}}
\newcommand{\On}{\mathsf{On}}
\newcommand{\No}{\mathsf{No}}
\newcommand{\OR}{\textnormal{OR}}
\newcommand{\flatter}{\mathrel{\prec\!\!\!\prec}}
\newcommand{\deft}[1]{\textbf{\textup{#1}}}
\newcommand{\ca}{\mathcal}
\newcommand{\llp}{(\!(}
\newcommand{\rrp}{)\!)}
\newcommand{\prece}{\preccurlyeq}
\newcommand{\succe}{\succcurlyeq}
\newcommand{\x}{\times}
\newcommand{\mf}{\mathfrak}
\newcommand{\0}{\emptyset}
\newcommand{\ges}{\geqslant}
\newcommand{\les}{\leqslant}
\newcommand{\N}{\mathbb{N}}
\newcommand{\Q}{\mathbb{Q}}
\newcommand{\R}{\mathbb{R}}
\newcommand{\T}{\mathbb{T}}
\newcommand{\HH}{\mathbb{H}}
\newcommand{\LL}{\mathbb{L}}
\newcommand{\dotrel}[1]{\mathrel{\dot{#1}}}
\DeclareFontFamily{U}{fsy}{}
\DeclareFontShape{U}{fsy}{m}{n}{<->s*[.9]psyr}{}
\DeclareSymbolFont{der@m}{U}{fsy}{m}{n}
\DeclareMathSymbol{\der}{\mathord}{der@m}{182}
\DeclareFontFamily{OMS}{smallo}{}
\DeclareFontShape{OMS}{smallo}{m}{n}{<->s*[.65]cmsy10}{}
\DeclareSymbolFont{smallo@m}{OMS}{smallo}{m}{n}
\DeclareMathSymbol{\cao}{\mathord}{smallo@m}{79}
\newtheorem{lem}{Lemma}[section]
\newtheorem{prop}[lem]{Proposition}
\newtheorem{cor}[lem]{Corollary}
\newtheorem{thm}[lem]{Theorem}
\theoremstyle{definition}
\newtheorem*{defn}{Definition}
\numberwithin{claim}{lem}
\numberwithin{equation}{section}
\title{Tame pairs of transseries fields}
\author{Nigel Pynn-Coates}
\address{Kurt G\"{o}del Research Center, Institute of Mathematics, University of Vienna, Austria}
\email{\href{mailto:nigel.pynn-coates@univie.ac.at}{nigel.pynn-coates@univie.ac.at}}
\begin{document}

\begin{abstract}
This paper concerns pairs of models of the theory of the differential field of logarithmic-exponential transseries that are tame as a pair of real closed fields. That is, the smaller model is bounded inside the larger model and there exists a standard part map. This covers for instance the differential fields of hyperseries or surreal numbers or maximal Hardy fields equipped with suitable enlargements of the differential field of transseries.
The theory of such pairs is complete and model complete in a natural language and it has quantifier elimination in the same language expanded by two predicates and a standard part map; it is also distal and hence has NIP.
Additionally, the smaller model is purely stably embedded in the pair, and hence so is the constant field.

More generally, we study differential-Hensel-Liouville closed pre-$H$-fields, i.e., pre-$H$-fields that are differential-henselian, real closed, and closed under exponential integration, equipped with lifts of their differential residue fields, and establish similar results in that setting relative to the differential residue field.
\end{abstract}
\maketitle


\section{Introduction}

Transseries were introduced independently by Dahn and G\"{o}ring in their work on nonstandard models of the real exponential field \cite{dahngoering} and by \'{E}calle in his solution of Dulac's Conjecture, connected to Hilbert's 16th problem \cite{ecalle1,ecalle2}.
Building on these constructions, the differential field of logarithmic-exponential transseries, denoted here by $\T$, was constructed in \cite{dmm97} (there denoted by $\R\llp t \rrp^{\LE}$).
These rich structures have an intricate construction, but roughly they are built from real power series in $x$ using exponentials and logarithms, allowing suitable infinite sums; some elements can be thought of as formal series expansions at $+\infty$ of solutions to real differential and functional equations ($x$ is thought of as going to $+\infty$).
For example, although the functional inverse of $(\log x)(\log\log x)$ cannot be expressed using exponentials, logarithms, and algebraic functions in finite terms, it can be expanded as a transseries in $\T$ \cite{dmm97,vdh-thesis}.
There is also a connection with o-minimality through Hardy fields of definable functions, which is outside the scope of this article.
The interest here is in $\T$ and structures like it as differential fields, and from its differential field structure emerge two other important relations, the ordering and the valuation ring, where the latter is the convex hull of $\R$, and so we consider $\T$ as an ordered valued differential field.

Studying $\T$ from the perspective of model theory, \cite{adamtt} establishes that $\T$ is \emph{model complete} as an ordered valued differential field.
Model completeness can be viewed as an abstract analogue of Hilbert's Nullstellensatz, and indeed the familiar Nullstellensatz can be deduced from the model completeness of algebraically closed fields; likewise, real closed fields are model complete, yielding a real Nullstellensatz.
Model completeness for $\T$ as an ordered valued differential field is thus informally a kind of Nullstellensatz for systems of algebraic differential equations with asymptotic conditions on solutions that can be solved in an ordered setting.
In other words, this result together with others from \cite{adamtt}, which won the 2018 Karp Prize from the Association for Symbolic Logic, substantiate the conjecture from \cite{dmm01}, made precise in \cite{adh-toward}, that $\T$ forms a universal domain for ordered differential algebra.
Among the other results is a complete and effective axiomatization of the elementary properties of~$\T$. 

Since those results, there has been interest in larger models of the theory of $\T$, such as hyperseries, surreal numbers, and maximal Hardy fields.
In contrast to $\T$, in which every element is bounded in absolute value by some finite compositional iterate $\exp_n(x)$ of $\exp(x)$, each of the above contains elements that are transexponential; this is clear for $\HH$ and $\No$, but for maximal Hardy fields is a result of Boshernitzan \cite{bosher-neworders,bosher-transexpHf}.
In this sense, $\T$ is small, which may seem perverse since $\T$ is itself a nonstandard real closed field (or even exponential field) extension of $\R$.
But taking the theory of $\T$ as a differential field as our starting point, $\T$ becomes the standard model. 

The differential field $\HH \supseteq \T$ of hyperseries was constructed as a field in \cite{hyperseries}, building on \cite{loghyperseries}, and as an elementary differential field extension of $\T$ in \cite{hyperseries-deriv}. 
Just to highlight a couple of points, $\HH$ is a proper class that contains formally transexponential elements such as $\exp_{\omega}(x)$, which in $\HH$ is a solution to the functional equation $E_{\omega}(x+1)=\exp E_{\omega}(x)$ that cannot be solved in $\T$.
Similarly, Conway's field $\No$ of surreal numbers from \cite{conway-surreals} (see also \cite{gonshor-surreals}) is a proper class containing every ordinal number.
A suitable derivation on $\No$ was constructed in \cite{bm-surreals}, and \cite{adh-surreals} shows that there is an elementary embedding $\T \to \No$ of differential fields sending $x$ to the ordinal $\omega$.
Finally, $\T$ can also be embedded into any maximal Hardy field 
\cite[Corollary~7.10]{ad-analHf}, and this embedding is elementary by \cite[Theorem~11.19]{adh-maxHftheory} (which also uses the main result of \cite{adh-omegafreeHf}).
In particular, $\HH$, $\No$, and maximal Hardy fields have exactly the same elementary properties as $\T$ as ordered valued differential fields, and thus are model complete.
On the other hand, their transexponential elements cannot be detected by these elementary properties.

In order to study these large elementary extensions of $\T$, we can enrich $\HH$ by the convex hull $\dot{\ca O}_{\HH}$ of $\T$ in $\HH$, which consists exactly of those elements of $\HH$ bounded in absolute value by some $\exp_n(x)$. 
This $\dot{\ca O}_{\HH}$ is a second valuation ring of $\HH$ properly containing the natural valuation ring $\ca O_{\HH}$ of $\HH$, namely the convex hull of $\R$ in $\HH$.
It therefore allows us to express in elementary terms the property of being exponentially bounded in $\HH$, in the same way that $\ca O_{\HH}$ expresses the property of being bounded by a real number.
These yield asymptotic relations comparing sizes of elements with respect to different scales.
In $\No$ one can take $\dot{\ca O}_{\No}$ to be the convex hull of the image of the elementary embedding $\T \to \No$ and $\ca O_{\No}$ to be the convex hull of $\R$ in $\No$, and for $H$ a maximal Hardy field define $\dot{\ca O}_{H}$ and $\ca O_{H}$ likewise.
We thus strengthen the model completeness of $\HH$, $\No$, and maximal Hardy fields as follows.
\begin{thm}\label{thmint:HNomodcomp}
The structures $(\HH, \ca O_{\HH}, \dot{\ca O}_{\HH})$, $(\No, \ca O_{\No}, \dot{\ca O}_{\No})$, and $(H, \ca O_{H}, \dot{\ca O}_{H})$ for $H$ any maximal Hardy field are model complete. 
\end{thm}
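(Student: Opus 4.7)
The plan is to derive this as a direct application of the general model completeness theorem for tame pairs of differential-Hensel-Liouville closed pre-$H$-fields announced in the abstract, which is the principal technical result of the paper. Accordingly, I would verify that each of $(\HH, \ca O_\HH, \dot{\ca O}_\HH)$, $(\No, \ca O_\No, \dot{\ca O}_\No)$, and $(H, \ca O_H, \dot{\ca O}_H)$ for a maximal Hardy field $H$ is an instance of that general framework, with $\T$ (via its canonical embedded copy) playing the role of the lift of the differential residue field relative to the coarsened valuation ring $\dot{\ca O}_H$.

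First I would recall that $\T$ embeds elementarily as an ordered valued differential field into $\HH$, $\No$, and any maximal Hardy field, by \cite{hyperseries-deriv}, \cite{adh-surreals}, and \cite{adh-maxHftheory} respectively; in particular each of the three underlying structures satisfies the same first-order theory as $\T$ in the ordered valued differential field language and is therefore dh-l closed. Next I would check that $\dot{\ca O}_H$, defined in each case as the convex hull of (the image of) $\T$ in $H$, is a coarsening of $\ca O_H$ whose differential residue field $\dot{\ca O}_H/\dot{\mf m}_H$ is canonically identified with $\T$ via the composition $\T \hookrightarrow \dot{\ca O}_H \twoheadrightarrow \dot{\ca O}_H/\dot{\mf m}_H$. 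Surjectivity follows immediately from $\dot{\ca O}_H$ being the convex hull of $\T$, and injectivity follows because no nonzero element of $\T$ is infinitesimal relative to $\T$ itself; together with $\T$ being closed under the derivation, this shows that $\T$ does furnish the required lift of the differential residue field.

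With these hypotheses in place, the model completeness of $(H, \ca O_H, \dot{\ca O}_H)$ in the natural language follows directly from the general theorem. I expect the main obstacle to lie not in this deduction, which is essentially a matter of hypothesis-checking, but in the proof of the general theorem on tame pairs of dh-l closed pre-$H$-fields, where the genuine model-theoretic content resides. One conceptual point worth emphasizing is that the general theorem is formulated so as to yield model completeness already in the language containing only predicates for the two valuation rings $\ca O_H$ and $\dot{\ca O}_H$, and in particular no predicate for the lift $\T$ is needed (unlike in the stronger quantifier elimination statement mentioned in the abstract); this is exactly the language appearing in the statement of the theorem, so no further reduct argument is required.
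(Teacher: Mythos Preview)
Your approach has a genuine gap: the claim that $\T$ is a lift of the differential residue field of $(K,\dot{\ca O}_K)$ is false in all three cases. Concretely, you assert that the composite $\T \hookrightarrow \dot{\ca O}_K \twoheadrightarrow \dot{\ca O}_K/\dot{\cao}_K$ is surjective because $\dot{\ca O}_K$ is the convex hull of $\T$; but convex hulls do not behave this way. In $\HH$, the element $\log_{\omega}(x)$ lies in $\dot{\ca O}_{\HH}$ (it is bounded by $x$) yet there is no $t \in \T$ with $\log_{\omega}(x)-t \in \dot{\cao}_{\HH}$, so the residue map is not surjective onto the image of $\T$. Equivalently, $(\HH,\T)$ is \emph{not} tame as a pair of real closed fields, a point made explicitly in the introduction.

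The paper circumvents this in two essentially equivalent ways. One route is to first enlarge $\T$ to an elementary extension $\T^* \subseteq \dot{\ca O}_K$ that \emph{is} a lift (this is Lemma~\ref{lem:largeTnlcoarsendhl} and Corollary~\ref{cor:largeTnlexTTP}, using Proposition~\ref{adh:7.1.3}), apply the model completeness of transserial tame pairs to $(K,\T^*)$, and then pass to the reduct $(K,\ca O_K,\dot{\ca O}_K)$; this is Proof~1 of Theorem~\ref{thm:modcompTnldhl}. The other route is to verify directly the first-order axioms of the theory $T^{\nl,\dhl}$ for $(K,\ca O_K,\dot{\ca O}_K)$, which do not mention any lift, and invoke the model completeness of $T^{\nl,\dhl}$ (Theorem~\ref{thm:modcompTnldhl}). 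Either way, the hypothesis-checking you describe must be replaced by one of these two arguments; the step ``surjectivity follows immediately from $\dot{\ca O}_H$ being the convex hull of $\T$'' cannot be salvaged.
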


This result is a consequence of our main theorem about tame pairs of transseries fields.
Consider the pair $(\HH, \T)$ of differential fields.
There is no standard part map from $\dot{\ca O}_{\HH} \to \T$, since for instance the compositional inverse $\log_{\omega}(x) \in \HH$ of $\exp_{\omega}(x)$ has no standard part in $\T$.
Thus $(\HH, \T)$ is not \emph{tame} as a pair of real closed fields, the study of which goes back to \cite{macintyre-thesis} (see also \cite[Section~3.6]{adamtt} for an exposition of results about tame pairs of real closed fields and \cite{dl-Tconvex} for tame pairs of o-minimal structures), but as we show in Corollary~\ref{cor:largeTnlexTTP}, $\T$ can be elementarily extended to $\T^* \subseteq \dot{\ca O}_{\HH}$ so that $(\HH, \T^*)$ is tame.
The same is true in $\No$ and maximal Hardy fields.
In Section~\ref{sec:hyperseries} we give a more explicit example of a $\T^* \subseteq \dot{\ca O}_{\HH}$ so that $(\HH, \T^*)$ is tame.

A \deft{transserial tame pair} is a pair $(K, L)$ of differential fields such that $K$ and $L$ are models of the theory of $\T$ as differential fields, $L$ is a proper differential subfield of $K$, and $(K, L)$ is tame as a pair of real closed fields.
Thus the pair $(\HH, \T^*)$ above is a transserial tame pair.
Our goal is to study the model theory of such pairs.
It turns out (see Lemma~\ref{lem:elemext}) that construing $K$ and $L$ as ordered valued differential fields with their natural valuation rings, i.e., the convex hulls of the constant field, $K$ is an elementary extension of $L$, so in the introduction we construe a transserial tame pair $(K, L)$ as a pair of ordered valued differential fields (in the body we are more explicit).
Then:

\begin{thm}[Corollary~\ref{cor:modcompTTP}]
Every transserial tame pair $(K, L)$ expanded by a predicate for the convex hull of $L$ in $K$ is model complete.
\end{thm}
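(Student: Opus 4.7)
The plan is to deduce this corollary from the paper's main model completeness result for differential-Hensel-Liouville closed pre-$H$-fields equipped with a lift of their differential residue field. Given a transserial tame pair $(K,L)$, let $V$ denote the convex hull of $L$ in $K$; this is a valuation ring of $K$ properly containing the natural valuation ring $\ca O_K$, and tameness supplies a standard part map $V \twoheadrightarrow L$ whose kernel is the maximal ideal of $V$, realizing $L$ as the residue field of $V$.

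First I would verify that $(K, V)$ is a pre-$H$-field: $V$ is convex for the ordering by construction, and the pre-$H$-field axioms relating the derivation of $K$ to $\ca O_K$ descend to any convex coarsening of $\ca O_K$ containing the constants. Second I would check that $(K, V)$ is differential-Hensel-Liouville closed: $K$ is real closed and closed under exponential integration since $K \models \Th(\T)$, while differential-henselianity with respect to $V$ follows from $(K, \ca O_K)$ being $\omega$-free and newtonian, via the standard transfer of differential-henselianity to coarsenings in such pre-$H$-fields. Third, since $L$ is a differential subfield of $K$, the inclusion $L \hookrightarrow V$ is a differential ring section of the standard part map, identifying $L$ with a lift of the differential residue field of $(K, V)$.

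At this stage $(K, V)$ together with the lift $L \hookrightarrow V$ fits the hypotheses of the general theorem, yielding model completeness of $(K, V, L)$ in the language of ordered valued differential fields expanded by predicates for $V$ and $L$. Since the pair $(K, L)$ already includes the predicate for $L$, adding the predicate for $V$ as in the statement gives exactly the stated model completeness.

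The main obstacle I anticipate is the differential-henselianity step: verifying that the coarsening $(K, V)$ inherits differential-henselianity from $(K, \ca O_K)$, and that its differential residue field is $L$ with the derivation induced from $K$. The remaining steps are more a matter of translating between the pair formalism and the pre-$H$-field-with-lift formalism of the general theorem.
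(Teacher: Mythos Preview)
Your approach is essentially the paper's: Lemma~\ref{lem:TTPcoarsenTdhl} packages your first three steps (showing that $(K,\dot{\ca O})$ with $\dot{\ca O}=\conv_K(L)$ is a $\d$-Hensel-Liouville closed pre-$H$-field with $L$ a lift of its differential residue field), and Corollary~\ref{cor:modcompTTP} then applies Theorem~\ref{thm:modcompTlift} with $T_{\res}=T^{\nl}_{\sm}$.

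Two points deserve sharpening. First, on the obstacle you flag: $(K,\ca O_K)$ is newtonian but \emph{not} $\d$-henselian (its residue field carries the trivial derivation, hence is not linearly surjective), so there is no ``transfer of differential-henselianity to coarsenings''. Rather, $\d$-henselianity of $(K,\dot{\ca O})$ is obtained from newtonianity of $(K,\ca O_K)$ together with specific conditions on this particular coarsening---notably that $a\notin\dot{\ca O}$ implies $a^\dagger\notin\dot{\ca O}$, which uses tameness and closure of $L$ under exponential integration---via Proposition~\ref{prop:coarsen-Tnldhl}. These conditions do not hold for an arbitrary convex coarsening. Second, the hypothesis of Theorem~\ref{thm:modcompTlift} is that $T_{\res}$ be model complete in $\ca L_{\res}$; you need to invoke the model completeness of $T^{\nl}_{\sm}$ in $\ca L_{\OR,\der}\cup\{\ca O\}$, which is why the natural valuation ring of $L$ must be part of the language of the pair.
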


Let $(K, L)$ be a transserial tame pair and $\dot{\ca O}$ be the convex hull of $L$ in $K$.
The proof of the above theorem makes use both of the model completeness of the theory of $\T$ as an ordered valued differential field and of the model theory and algebra of structures like $(K, \dot{\ca O})$, a coarsening in the sense of valued fields of $K$.
These latter structures $(K, \dot{\ca O})$ are differential-Hensel-Liouville closed pre-$H$-fields (see Section~\ref{sec:prelim} for definitions and Lemma~\ref{lem:TTPcoarsenTdhl}), and their model theory and algebra was developed in \cite{pc-preH-gap}, which relies crucially on \cite{pc-dh} and of course considerable parts of the machinery of \cite{adamtt}.

We can obtain the theorem about $\HH$ and $\No$ above from the result about transserial tame pairs by considering structures of the form $(K, \ca O, \dot{\ca O})$, where $\ca O$ is the natural valuation ring of $K$ and $\dot{\ca O}$ is the convex hull of $L$ in $K$.
Model completeness for these structures is Theorem~\ref{thm:modcompTnldhl}.
As we intend to show elsewhere, $L$ is not definable in $(K, \ca O, \dot{\ca O})$, making $(K, \ca O, \dot{\ca O})$ a proper reduct of $(K, L)$ in the sense of definability.
In fact, the theory of such structures $(K, \ca O, \dot{\ca O})$ can be axiomatized in the language of ordered differential fields together with predicates for $\ca O$ and $\dot{\ca O}$, without reference to $L$.
In Section~\ref{sec:coarsenuncoarsen} we show how to pass between a transserial tame pair $(K, L)$, such a structure $(K, \ca O, \dot{\ca O})$, and a structure $(K, \dot{\ca O})$ together with its ordered differential residue field, which are thus three ways of considering essentially the same objects.

In fact, the model completeness of transserial tame pairs is a consequence of more general results about differential-Hensel-Liouville closed pre-$H$-fields equipped with lifts of their differential residue fields.
These differential residue fields may be expanded by additional structure, such as a valuation ring (which is necessary to obtain model completeness for transserial tame pairs).
Section~\ref{sec:modcomp} establishes model completeness relative to the possibly enriched differential residue field, as well as the model completeness of the structures $(K, \ca O, \dot{\ca O})$ mentioned above.

In Section~\ref{sec:AKErelQE} we refine our techniques and results, obtaining relative completeness, relative model completeness, and relative quantifier elimination with a standard part map, where ``relative'' is in the sense of the previous paragraph.
These results are analogous to two-sorted results from \cite{pc-preH-gap}.
Such relative completeness is often called an Ax--Kochen/Ershov theorem, after their results about henselian valued fields \cite{ak1,ak3,ershov}, and has here the following consequence for transserial tame pairs.
\begin{thm}[Corollary~\ref{cor:TTPcomp}]
The theory of transserial tame pairs is complete.
\end{thm}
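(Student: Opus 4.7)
The plan is to derive completeness from the relative (Ax--Kochen/Ershov style) completeness theorem that the paper will establish in Section~\ref{sec:AKErelQE}. Concretely, I would take two arbitrary transserial tame pairs $(K_1, L_1)$ and $(K_2, L_2)$ and show they satisfy the same first-order sentences, reducing this to elementary equivalence of their differential residue fields in an appropriate enriched language.

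The first step would be to pass from each pair to the coarsening setting of Section~\ref{sec:coarsenuncoarsen}: letting $\dot{\ca O}_i$ denote the convex hull of $L_i$ in $K_i$, Lemma~\ref{lem:TTPcoarsenTdhl} ensures that each $(K_i, \dot{\ca O}_i)$ is a differential-Hensel-Liouville closed pre-$H$-field, with $L_i$ appearing as a lift of its differential residue field. The induced structure on that differential residue field would be that of an ordered valued differential field whose valuation ring is the convex hull of $\R$, matching the natural valuation ring of $L_i \models \Th(\T)$. I would then invoke the relative completeness theorem of Section~\ref{sec:AKErelQE} to reduce elementary equivalence of the two pairs to elementary equivalence of these differential residue fields as ordered valued differential fields. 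Since $L_1$ and $L_2$ are both models of the complete theory $\Th(\T)$ in this language (complete by definition, as the theory of the specific structure $\T$), they are elementarily equivalent, whence the pairs $(K_1, L_1)$ and $(K_2, L_2)$ inherit elementary equivalence.

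The main delicacy will be ensuring that the structure induced on the differential residue field is precisely what the relative completeness theorem demands, and in particular that the induced valuation ring on the residue field corresponds to the natural valuation ring of $L_i$. This is handled by the three-way correspondence developed in Section~\ref{sec:coarsenuncoarsen}, so once those pieces are in place the corollary reduces to invoking the correspondence and the AKE-style theorem in sequence, with no substantive additional calculation.
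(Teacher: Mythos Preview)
Your proposal is correct and matches the paper's approach: the paper's proof of Corollary~\ref{cor:TTPcomp} is a one-line application of the Ax--Kochen/Ershov principle (Corollary~\ref{cor:AKE}) together with the completeness of $T^{\nl}_{\sm}$ in the language $\ca L_{\OR,\der}\cup\{\ca O\}$, which is exactly what you outline. The only cosmetic difference is that the paper cites the completeness of the axiomatic theory $T^{\nl}_{\sm}$ from \cite[Corollary~16.6.3]{adamtt} rather than appealing to ``$\Th(\T)$ is complete by definition,'' but since $T^{\nl}_{\sm}=\Th(\T)$ this is immaterial.
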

The relative quantifier elimination similarly yields a quantifier elimination for transserial tame pairs by expanding the language by two predicates needed for quantifier elimination for $\T$ (see \cite[Chapter~16]{adamtt}) and by a standard part map.
For precise statements, see Section~\ref{sec:AKErelQE}.
Finally, we mention two more results.
The first concerns definablility, that in a transserial tame pair no new structure is induced on the smaller model or the constant field:
\begin{thm}[Corollary~\ref{cor:Cstabembed}]
Let $(K, L)$ be a transserial tame pair and $C$ be the constant field of $K$.
Then any subset of $L^n$ that is definable in $(K, L)$ with parameters from $K$ is definable in $L$ with parameters from $L$, and any subset of $C^n$ that is definable in $(K, L)$ with parameters from $K$ is definable in $C$ with parameters from~$C$.
\end{thm}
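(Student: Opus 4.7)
The plan is to deduce the theorem from the relative quantifier elimination with standard part map established in Section~\ref{sec:AKErelQE}. Let $(K, L)$ be a transserial tame pair, write $\dot{\ca O}$ for the convex hull of $L$ in $K$, and $\operatorname{st} \colon \dot{\ca O} \to L$ for the standard part map. Given a formula $\varphi(x, y)$ in the pair language and a parameter $a$ from $K$, the first step is to replace $\varphi$ with an equivalent quantifier-free formula $\psi(x, y)$ in the language of the pair augmented by predicates for $L$ and $\dot{\ca O}$ together with the symbol for $\operatorname{st}$.

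The atomic sub-formulas of $\psi$ are of three types: differential polynomial (in)equalities over $K$; the predicates $L(\cdot)$ and $\dot{\ca O}(\cdot)$ applied to terms; and identities involving $\operatorname{st}$. For $x \in L^n$ the predicates $L(x_i)$ and $\dot{\ca O}(x_i)$ are automatic and $\operatorname{st}(x_i) = x_i$. The decisive identity is that, for any differential polynomial term $t(x, a)$ with $x \in L^n$ and sub-terms of $a$ all in $\dot{\ca O}$, one has $\operatorname{st}(t(x, a)) = t(x, \operatorname{st}(a))$, since $\operatorname{st}$ is a homomorphism of ordered differential rings onto $L$. After a finite case split on which sub-terms of $a$ belong to $\dot{\ca O}$, each atomic formula restricted to $L^n$ becomes a formula in the language of ordered valued differential fields on $L$ alone, with parameters that are $\operatorname{st}$-values of sub-terms of $a$ and hence lie in $L$. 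This yields the first assertion.

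For $C = C_K$ the same strategy applies, with two additional observations: since the natural valuation ring $\ca O$ of $K$ satisfies $C \subseteq \ca O \subseteq \dot{\ca O}$, the map $\operatorname{st}$ restricts to a surjective ring homomorphism $C \to C_L$; and derivations vanish on $C$, so differential polynomial terms in $x \in C^n$ reduce to ordinary polynomial terms. The induced structure on $C$ from $(K, L)$ is thus the tame pair $(C, C_L)$ of real closed fields equipped with its standard part map, and the same case-split argument shows that any $(K, L)$-definable $X \subseteq C^n$ is definable in this induced structure on $C$ with parameters from $C$.

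The main technical obstacle is the bookkeeping in the case split: one must track which sub-terms of $a$ lie in $\dot{\ca O}$ and coherently substitute $\operatorname{st}$-values so that the resulting parameters are indeed in $L$ (respectively $C$) and no residual appearance of $a$ remains. Once this is organised, the conclusion is essentially a direct reading-off of information from the quantifier-free normal form provided by the relative quantifier elimination.
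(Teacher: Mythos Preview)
For the claim about $L$, the paper takes a different and shorter route: Corollary~\ref{cor:kstabembed} deduces pure stable embeddedness of the lift $\bm k$ directly from the Equivalence Theorem (Theorem~\ref{thm:eqthm}) by a type-realization argument, and the statement for transserial tame pairs is the special case $\ca L_{\res}=\ca L_{\OR,\der}$. Your syntactic approach via quantifier elimination is plausible in spirit, but the case split you describe is too coarse to count as a proof. Knowing which coordinates of $a$ lie in $\dot{\ca O}$ and replacing those by their standard parts does not decide atomic formulas such as $t(x,a)\ges 0$ or $t_1(x,a)\dotrel{\prece} t_2(x,a)$ for $x\in L^n$: for the former, when $t(x,\operatorname{st}(a))=0$ the sign of $t(x,a)$ still depends on the $\dot{\cao}$-infinitesimal remainder; for the latter one needs further $L$-parameters such as the values $\pi(a_i,a_j)$ and the $\dotrel{\asymp}$-relations among products of the $a_i$ and their derivatives. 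A correct syntactic argument would require a more elaborate induction on terms than the one-line reduction you propose.

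The claim about $C$ contains a genuine error. You treat $(C,C_L)$ as a nontrivial tame pair of real closed fields with a standard part map $C\to C_L$, but by Lemma~\ref{lem:elemext} one has $C_K=C_L$ in any transserial tame pair, so $\operatorname{st}|_C$ is the identity and there is no pair structure on $C$ to exploit. The paper instead uses the first assertion together with $C\subseteq L$ to pass to an $L$-definable subset of $C^n$, and then invokes the known pure stable embeddedness of the constant field in any model of $T^{\nl}_{\sm}$ \cite[Proposition~16.6.7]{adamtt}. Your ``same case-split argument'' cannot replace this step, since the standard part map attached to $\dot{\ca O}$ lands in $L$, not in~$C$.
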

The next result shows that transserial tame pairs are tame in the sense of model-theoretic dividing lines.
Roughly, the class of distal structures consists of those structures with NIP that are ``purely unstable''. 
\begin{thm}[Theorem~\ref{thm:TTPdistal}]
The theory of transserial tame pairs is distal and so has NIP.
\end{thm}

\section{Preliminaries and notation}\label{sec:prelim}

We let $n$ range over $\N = \{0, 1, 2, \dots\}$.

\subsection{Valued differential fields and differential-henselianity}

Let $K$ be a differential field, equipped with the derivation $\der \colon K \to K$.
Attached to $K$ is an important subfield, the \deft{constant field} of $K$, denoted by $C \coloneqq \{ f \in K : \der(f)=0 \}$; to indicate the dependence on $K$, we write $C_K$.
For $f \in K$, we often write $f'$ for $\der(f)$ if the derivation is clear from context and set $f^\dagger \coloneqq f'/f$ if $f \neq 0$, the logarithmic derivative of $f$.
We say that $K$ is \deft{closed under integration} if $\der K = K$
and \deft{closed under exponential integration} if $(K^{\x})^\dagger = K$.
We let $K\{Y\} \coloneqq K[Y, Y', Y'', \dots]$ be the ring of differential polynomials over $K$ and set $K\{Y\}^{\neq} \coloneqq K\{Y\} \setminus \{0\}$.
For $P \in K\{Y\}^{\neq}$, the \deft{order} of $P$ is the smallest $m \in \N$ such that $P \in K[Y, Y', \dots, Y^{(m)}]$ and $P_n$ is the homogeneous part of $P$ of degree $n$, which we will only need for $n=0$ and $n=1$; degree for differential polynomials means total degree.
We call $K$ \deft{linearly surjective} if for all $a_0, \dots, a_n \in K$ with $a_n \neq 0$, the equation $1 + a_0y + a_1y' + \dots + a_n y^{(n)} = 0$ has a solution in $K$.
If $L$ is a differential field extension of $K$ and $a \in L$, then $K \langle a \rangle$ denotes the differential subfield of $L$ generated by $a$ over~$K$.

Now let $(K, \ca O)$ be a \deft{valued differential field} in the sense of \cite[Section~4.4]{adamtt}, which simply means that $K$ is a differential field and $\ca O \supseteq \Q$ is a valuation ring of $K$, i.e., $\ca O$ is a subring of $K$ that contains $a$ or $a^{-1}$ for every $a \in K^{\x}$.
In this paper, there will often be two valuation rings on a field equipped with a single derivation, so in the notation, we specify $\ca O$ but leave $\der$ implicit.
With $\ca O^{\x} = \{ a \in K^{\x} : a, a^{-1} \in \ca O \}$, the ring $\ca O$ has a unique maximal ideal $\cao \coloneqq \ca O \setminus \ca O^{\x}$.
We introduce the following binary relations, for $f, g \in K$:
\begin{align*}
f \prece g\ &\Leftrightarrow\ f \in \ca Og, & f \asymp g\ &\Leftrightarrow\ f \prece g\ \text{and}\ g \prece f,\\
f \prec g\ &\Leftrightarrow\ f \in \cao g\ \text{and}\ g\neq 0, & f\sim g\ &\Leftrightarrow\ f-g \prec g.
\end{align*}
The relation $\asymp$ is an equivalence relation on $K$, the relation $\sim$ is an equivalence relation on $K^{\x}$, and they satisfy: if $f \sim g$, then $f \asymp g$.
(For relations like $\prece$ and their connection to valuations, see for example \cite[Definition~3.1.1]{adamtt} and subsequent remarks.)
When there is a second valuation ring of $K$ denoted by $\dot{\ca O}$, we distinguish the above as $\dot{\cao}$, $\dotrel{\prece}$, etc.
The \deft{residue field} of $(K, \ca O)$ is $\res(K, \ca O) \coloneqq \ca O/\cao$.
By our assumption that $\Q \subseteq \ca O$, the characteristics of $K$ and $\res(K, \ca O)$ are~$0$.

To define differential-henselianity below and in two lemmas in Section~\ref{sec:coarsenuncoarsen}, we extend the relations displayed above to $K\{Y\}$, for which it is convenient to work with valuations instead of valuation rings.
It is well-known that to $(K, \ca O)$ is associated a surjective \emph{valuation} $v \colon K^{\x} \to \Gamma$, where $\Gamma$ is an ordered abelian group called the \deft{value group} of $K$ (and conversely, from such a valuation one gets a valuation ring of $K$).
Adding a new symbol $\infty$ to the value group $\Gamma$ and extending the addition and ordering to $\Gamma_\infty \coloneqq \Gamma \cup \{\infty\}$ by $\infty+\gamma=\gamma+\infty=\infty$ and $\infty>\gamma$ for all $\gamma \in \Gamma$ allows us to extend $v$ to $K$ by setting $v(0) \coloneqq \infty$.
Then for all $f, g \in K$,
\[
f \prece g\ \Leftrightarrow\ vf \ges vg\ \qquad\ \text{and}\ \qquad\ f \prec g\ \Leftrightarrow\ vf > vg.
\]
Thus setting $v(P)$ to be the minimum valuation of the coefficients of $P \in K\{Y\}$, we can extend the relations $\prece$, $\prec$, $\asymp$, and $\sim$ to $K\{Y\}$.

One basic condition relating the valuation and the derivation is called \deft{small derivation}, which means for $(K, \ca O)$ that $\der\cao \subseteq \cao$.
In this case, $\der\ca O \subseteq \ca O$ \cite[Lemma~4.4.2]{adamtt}, so $\der$ induces a derivation on $\res(K, \ca O)$, and we always construe $\res(K, \ca O)$ as a differential field with this induced derivation.
(Small derivation is a strong form of continuity of the derivation with respect to the valuation topology, for which see \cite[Lemma~4.4.7]{adamtt}.)
Note that if $(K,\ca O)$ has small derivation and $\ca O = C + \cao$, then the derivation induced on $\res(K,\ca O)\cong C$ is trivial.
In this paper, $(K,\ca O)$ will usually satisfy $\ca O = C + \cao$ (see the next subsection), while $(K,\dot{\ca O})$ will usually have small derivation and nontrivial derivation on $\res(K,\dot{\ca O})$.

An analogue of henselianity of a valued field for a valued differential field with small derivation is differential-henselianity, which we define now.
The distinct notion of newtonianity (see \cite[Chapter~14]{adamtt}), which is relevant for valued differential fields like $\T$, will be introduced in Section~\ref{subsec:newtonomega}.
In light of the above, $(K,\dot{\ca O})$ is a valued differential field for the rest of this subsection.

\begin{defn}
We call $(K, \dot{\ca O})$ \deft{differential-henselian} (\deft{$\d$-henselian} for short) if $(K, \dot{\ca O})$ has small derivation and:
\begin{enumerate}[label=(DH\arabic*)]
	\item the differential field $\res(K, \dot{\ca O})$ is linearly surjective;
	\item if $P \in K\{Y\}$ satisfies $P_0 \dotrel{\prec} 1$ and $P \dotrel{\asymp} P_1 \dotrel{\asymp} 1$, there is $y \dotrel{\prec} 1$ in $K$ with $P(y) = 0$.
\end{enumerate}
\end{defn}
Note that $P_0=P(0)$ and $P_1 \dotrel{\asymp} 1$ means $\frac{\partial P}{\partial Y^{(n)}}(0) \dotrel{\asymp} 1$ for some $n$ at most the order of $P$.
Differential-henselianity was introduced in \cite{scanlon} and developed systematically in \cite[Chapter~7]{adamtt}.
We call $K$ \deft{Liouville closed} if $K$ is real closed, closed under integration, and closed under exponential integration.
We call $(K, \dot{\ca O})$ \deft{differential-Hensel-Liouville closed} (shorter: \deft{$\d$-Hensel-Liouville closed}) if $(K, \dot{\ca O})$ is $\d$-henselian and $K$ is Liouville closed.
(If $(K, \dot{\ca O})$ is $\d$-henselian, then closure under integration comes for free by \cite[Lemma~7.1.8]{adamtt}.)

A \deft{lift} of $\res(K, \ca O)$ is a subfield $\bm k \subseteq \ca O$, in the sense that $\bm k$ is a subring of $\ca O$ that is itself a field, that maps isomorphically as a field onto $\res(K, \ca O)$ under the residue map; equivalently, for every $a \in \ca O^{\x}$ there is a (necessarily unique) $u \in \bm k^{\x}$ with $a \sim u$ (i.e., $a-u \in \cao$).
Recall that if $(K, \ca O)$ is henselian as a valued field, then $(K, \ca O)$ always admits a lift of its residue field \cite{kaplansky1,maclane-powerseries} (or see \cite[Proposition~3.3.8]{adamtt}).
In case $(K, \dot{\ca O})$ is $\d$-henselian, it can be equipped with a lift of its \emph{differential} residue field \cite[Proposition~7.1.3]{adamtt}, which means that there exists $\bm k \subseteq \dot{\ca O}$ that is a differential subfield of $\dot{\ca O}$ and maps isomorphically as a differential field onto $\res(K, \dot{\ca O})$ under the residue map.
By the proof of that proposition:
\begin{prop}[{\cite[Proposition~7.1.3]{adamtt}}]\label{adh:7.1.3}
If $(K, \dot{\ca O})$ is $\d$-henselian, then any differential subfield of $\dot{\ca O}$ can be extended to a lift of the differential residue field $\res(K,\dot{\ca O})$. 
\end{prop}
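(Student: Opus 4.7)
The plan is a Zorn's lemma argument combined with a standard dichotomy. Let $\ca F$ be the collection of differential subfields of $\ca O$ extending the given one, partially ordered by inclusion. Chains admit unions in $\ca F$, so Zorn yields a maximal element $F \in \ca F$. The residue map $\ca O \to \res(K, \ca O)$ restricts to an embedding of differential fields $F \hookrightarrow \res(K, \ca O)$ with image $\res F$; the goal is to prove $\res F = \res(K, \ca O)$, from which $F$ is the desired lift. Assuming the contrary, pick $a \in \ca O$ with $\bar a \notin \res F$; I will construct a proper extension of $F$ in $\ca F$, contradicting maximality, splitting into two cases according to whether $\bar a$ is differentially transcendental or differentially algebraic over $\res F$.

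If $\bar a$ is differentially transcendental over $\res F$, then $a$ is differentially transcendental over $F$: a nonzero annihilator $R \in F\{Y\}$ of $a$ would reduce to a nonzero $\bar R \in \res F\{Y\}$ (as $v$ is trivial on $F$) satisfying $\bar R(\bar a) = 0$, a contradiction. For every nonzero $R \in F\{Y\}$ one then has $\overline{R(a)} = \bar R(\bar a) \neq 0$, so $R(a) \in \ca O^{\x}$; hence $F\langle a\rangle \subseteq \ca O$, properly extending $F$.

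If $\bar a$ is differentially algebraic over $\res F$, choose $\bar P \in \res F\{Y\}$ of minimal Ritt rank annihilating $\bar a$, lift $\bar P$ coefficientwise to $P \in F\{Y\}$ of the same order $r$, and set $S \coloneqq \partial P / \partial Y^{(r)}$. Then $Q(Y) \coloneqq P(a + Y) \in K\{Y\}$ satisfies $Q_0 = P(a) \prec 1$ (from $\bar P(\bar a) = 0$), and minimality of Ritt rank forces $\bar S(\bar a) \neq 0$, so $S(a) \asymp 1$, giving $Q_1 \asymp 1$; since $a$ and the coefficients of $P$ all lie in $\ca O$, also $Q \asymp 1$. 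Applying (DH2), there exists $y \prec 1$ in $K$ with $Q(y) = 0$, and then $b \coloneqq a + y \in \ca O$ satisfies $\bar b = \bar a$ and $P(b) = 0$.

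The remaining step, and the main technical obstacle, is to verify $F\langle b\rangle \subseteq \ca O$. Minimality of Ritt rank of $\bar P$ implies that $\bar b, \bar b', \ldots, \bar b^{(r-1)}$ are algebraically independent over $\res F$, and consequently for every nonzero polynomial $Q \in F[Y_0, \ldots, Y_{r-1}]$ the value $Q(b, b', \ldots, b^{(r-1)})$ reduces to $\bar Q(\bar b, \ldots, \bar b^{(r-1)}) \neq 0$, so $v$ is trivial on the purely transcendental extension $F(b, b', \ldots, b^{(r-1)})$. Because $\bar S(\bar b) \neq 0$, the equation $P(b) = 0$ makes $b^{(r)}$ algebraic over this field, and successively differentiating $P(b) = 0$ and dividing by $S(b) \asymp 1$ expresses every higher derivative of $b$ as a rational function in $b, b', \ldots, b^{(r)}$. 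Therefore $F\langle b\rangle = F(b, b', \ldots, b^{(r)})$ is an algebraic extension of a subfield on which $v$ is trivial, so $v$ is trivial on $F\langle b\rangle$ too, placing $F\langle b\rangle$ inside $\ca O$ and contradicting the maximality of $F$. The coordination of Ritt--Kolchin minimality with the small-derivation hypothesis packaged into $\d$-henselianity is what makes this last verification delicate.
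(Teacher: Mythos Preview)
The paper does not give its own proof of this proposition; it is quoted verbatim from \cite[Proposition~7.1.3]{adamtt}, with the remark that the \emph{proof} there already yields the stronger form about extending a prescribed differential subfield. Your argument is correct and is exactly the standard Zorn-plus-dichotomy proof one finds in that reference: take a maximal differential subfield $F\subseteq\ca O$, and if $\res F\neq\res(K,\ca O)$ enlarge $F$ by treating the $\d$-transcendental case directly and the $\d$-algebraic case via (DH2) applied to a minimal annihilator, then verify that the resulting $F\langle b\rangle$ still lies in $\ca O$. There is thus nothing to contrast---your write-up reconstructs the cited proof.
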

These lifts play an important role in this paper.

\subsection{Ordered differential fields and (pre-)\texorpdfstring{$H$}{H}-fields}\label{subsec:preH}

Let $K$ be an ordered differential field, in the sense that $K$ is a differential field additionally equipped with an ordering $\les$ making it an ordered field (i.e., the ordering is compatible with addition and multiplication in the usual way).
For $A \subseteq K$, we let $\conv_K(A) \coloneqq \{ b \in K : a_1 \les b \les a_2\ \text{for some}\ a_1, a_2 \in A \}$ denote the convex hull of $A$ in $K$.
Then $K$ can always be equipped with its so-called natural valuation ring $\conv_K(C)$, which henceforth we denote by $\ca O$, while another valuation ring of $K$ will be denoted by $\dot{\ca O}$.
For another ordered differential field $L$ we write $\ca O_L$ and~$\dot{\ca O}_L$.

The valuation ring $\ca O$ is always existentially definable in the ordered differential field $K$ without parameters.
Certain model-theoretic statements are therefore independent of the distinction between $K$ and $(K, \ca O)$: For example, $K \prece L$ if and only if $(K, \ca O) \prece (L, \ca O_L)$.
On the other hand, it may be that $L$ is a differential field extension of $K$ but $(L, \ca O_L)$ is not a valued differential field extension of $(K, \ca O)$, so caution will be taken to specify valuation rings when necessary.

Typically, we expect nice interaction between the ordering and the derivation.
For instance, we call $K$ an \deft{$H$-field} if:
\begin{enumerate}[label=(H\arabic*)]
    \item for all $a \in K$, if $a>C$, then $a'>0$;
    \item $\ca O = C + \cao$.
\end{enumerate}
The second condition says that $C$ is a lift of $\res(K, \ca O)$ as a field, and also yields $\der\ca O\subseteq \cao$ if $(K,\ca O)$ has small derivation.
For example, the differential field $\T$ is an $H$-field, and indeed $H$-fields were introduced in \cite{ad-hf} towards axiomatizing the theory of $\T$.
As described above, when convenient we construe an $H$-field $K$ as an ordered valued differential field $(K, \ca O)$.
A related notion is the following.
We call $(K, \dot{\ca O})$ a \deft{pre-$H$-field} if:
\begin{enumerate}[label=(PH\arabic*)]
    \item\label{ph1} $\dot{\ca O}$ is convex (with respect to $\les$);
    \item\label{ph2} for all $f \in K$, if $f > \dot{\ca O}$, then $f'>0$;
    \item\label{ph3} for all $f, g \in K^{\x}$ with $f \dotrel{\prece} 1$ and $g \dotrel{\prec} 1$, we have $f' \dotrel{\prec} g^\dagger$.
\end{enumerate}
In this definition, $\dot{\ca O}$ may be any valuation ring of $K$ with associated relations $\dotrel{\prece}$ and $\dotrel{\prec}$, but \ref{ph2} forces $C \subseteq \dot{\ca O}$, so $\ca O \subseteq \dot{\ca O}$ by \ref{ph1}.
Recall that \ref{ph1} holds if and only if $\dot{\cao}$ is convex,
so if \ref{ph1} holds, then $\les$ induces an ordering on $\res(K, \dot{\ca O})$ making it an ordered field.
We thus construe the residue field of a pre-$H$-field with small derivation as an ordered differential field.
Construing an $H$-field $K$ as an ordered valued differential field $(K, \ca O)$, it is a pre-$H$-field, where \ref{ph3} holds by \cite[Lemma~10.5.1]{adamtt}.
Pre-$H$-fields are so named because every ordered valued differential subfield of an $H$-field is a pre-$H$-field and moreover every pre-$H$-field can be extended to an $H$-field by \cite[Corollary~4.6]{ad-hf} (also see \cite[Corollary~10.5.13]{adamtt}).

We have already mentioned that $\T$ is an $H$-field, but it is also a Liouville closed $H$-field. 
Although it has small derivation, it is not $\d$-henselian because its residue field is isomorphic to $\R$ with the trivial derivation.
Instead, it is \emph{newtonian}, a more subtle and more technical differential analogue of henselianity defined next.

\subsection{Newtonianity and \texorpdfstring{$\upomega$}{ω}-freeness}\label{subsec:newtonomega}

We define what it means for an $H$-field to be newtonian and $\upomega$-free, critical properties of $\T$ that appear directly only in Lemmas~\ref{lem:uncoarsen-newtonian} and \ref{lem:uncoarsen-omegafree}.
Ignoring those two proofs will allow the reader to skip this subsection.
For more on newtonianity, see \cite[Chapter~14]{adamtt}; for more on $\upomega$-freeness, see \cite[Sections~11.7, 11.8, 13.6]{adamtt}.

Before defining newtonianity, we need to explain some additional induced structure on the value group of a pre-$H$-field.
Let $(K, \ca O)$ be a pre-$H$-field with $\ca O \neq K$.
Then logarithmic differentiation in $K$ induces a map defined by, for $g \in K^{\x}$ with $vg\neq 0$,
\begin{align*}
\psi \colon \Gamma^{\neq} &\to \Gamma\\
vg &\mapsto v(g^\dagger)
\end{align*}
(this map makes sense by \cite[Proposition~9.1.3 and Lemma~10.1.1]{adamtt}).
We call $(\Gamma, \psi)$ the \deft{asymptotic couple} of $K$; 
such structures were introduced by Rosenlicht \cite{rosen-dval}, and more about them can be found in \cite[Sections~6.5, 9.2]{adamtt}.
Asymptotic couples appear a little in the rest of this section and more in Lemmas~\ref{lem:uncoarsen-newtonian} and \ref{lem:uncoarsen-omegafree}.

It is worth noting that asymptotic couples make sense in a more general context, namely asymptotic fields, for which see \cite[Chapter~9]{adamtt}. Likewise, the rest of the subsection, including newtonianity and $\upomega$-freeness, make sense in the context of $H$-asymptotic fields. In particular, the ordering on $K$ is not used in this subsection.
Rather, we have assumed that $(K, \ca O)$ is a pre-$H$-field to avoid introducing additional definitions that are not important to this paper.
In appealing to results from \cite{adamtt} about asymptotic couples, note that every asymptotic couple $(\Gamma, \psi)$ in this paper is of $H$-type in the sense that $0<\alpha<\beta$ in $\Gamma$ implies $\psi(\alpha)\ges\psi(\beta)$ (also called $H$-asymptotic couples).

To define newtonianity, we recall the notion of newton degree from \cite[Sections~11.1 and 11.2]{adamtt}.
Now we suppose that $\psi(\Gamma^{\neq})$ has no maximum.
We call $\phi \in K^{\x}$ \deft{active} (tacitly, in $(K, \ca O)$) if $v\phi \les \psi(\gamma)$ for some $\gamma \in \Gamma^{\neq}$.
Below, we let $\phi$ range over active elements of $K$.
Then the differential field $K^{\phi}$ is the field $K$ whose derivation $\der$ is replaced by its multiple $\phi^{-1}\der$, so the constant field of $K^{\phi}$ remains $C$.
If $\phi>0$, then $(K^{\phi}, \ca O)$ remains a pre-$H$-field; if $\phi>0$ and $K$ is moreover an $H$-field, then so is $K^{\phi}$.
The asymptotic couple of $(K^{\phi}, \ca O)$ is $(\Gamma, \psi-v\phi)$, where $(\psi-v\phi)(\gamma)=\psi(\gamma)-v\phi$ for $\gamma \in \Gamma^{\neq}$, so $(\psi-v\phi)(\Gamma^{\neq})$ still has no maximum.
What we have arranged is that $(K^{\phi}, \ca O)$ has small derivation by \ref{ph3} and $\{ \gamma \in \Gamma^{\neq} : (\psi-v\phi)(\gamma)>0 \} \neq \0$.

This procedure leads to the ring $K^{\phi}\{Y\}$ of differential polynomials over $K^{\phi}$, with evaluation taking place in the differential field $K^{\phi}$.
We have a ring isomorphism $K\{Y\} \to K^{\phi}\{Y\}$ given by associating to $P \in K\{Y\}$ an appropriate element $P^{\phi} \in K^{\phi}\{Y\}$, 
with the property that $P^{\phi}(y) = P(y)$ for all $y \in K$.
The details of this map are not important here and can be found in \cite[Section~5.7]{adamtt}, but it is the identity on the common subring $K[Y] = K^{\phi}[Y]$ of $K\{Y\}$ and $K^{\phi}\{Y\}$.
Consider $P^{\phi} \in K^{\phi}\{Y\}^{\neq}$ and take $a \in K^{\x}$ with $va=-vP^{\phi}$.
Then we have the differential polynomial $\overline{aP^{\phi}} \in \res(K^{\phi}, \ca O)\{Y\}^{\neq}$ obtained by applying the residue map to $aP^{\phi}$, which makes sense since $(K^{\phi}, \ca O)$ has small derivation (although $\res(K^{\phi}, \ca O)$ is just the field $\res(K, \ca O)$ equipped with the trivial derivation), and we let $\ddeg P^{\phi}$ be the (total) degree of $\overline{aP^{\phi}}$.
What is important here is that $\ddeg P^{\phi}$ eventually stabilizes, meaning that there is an active $\phi_0 \in K^{\x}$ such that for all (active) $\phi \prece \phi_0$, $\ddeg P^\phi=\ddeg P^{\phi_0}$.
We denote this eventual value of $\ddeg P^\phi$ by $\ndeg P$. 
With this, we can define newtonianity, one of the most consequential elementary properties of $\T$, which includes the assumption above that $\psi(\Gamma^{\neq})$ has no maximum.
\begin{defn}
We call $(K, \ca O)$ \deft{newtonian} if each $P \in K\{Y\}$ with $\ndeg P = 1$ has a zero in~$\ca O$.
\end{defn}

Another important property of $\T$ is that it is $\upomega$-free.
We continue to assume that $(K, \ca O)$ is a pre-$H$-field with $\ca O \neq K$ such that $\psi(\Gamma^{\neq})$ has no maximum.
\begin{defn}
We say that $(K, \ca O)$ is \deft{$\upomega$-free} if for every $f \in K$, there exists $g \succ 1$ in $K$ with $f - 2(g^{\dagger\dagger})' + (g^{\dagger\dagger})^2 \succe (g^{\dagger})^2$.
\end{defn}

\subsection{The theory of \texorpdfstring{$\T$}{T}}
Let $T^{\nl}_{\sm}$ be the theory of $\upomega$-free, newtonian, Liouville closed $H$-fields with small derivation.
This theory completely axiomatizes the theory of the differential field $\T$; the most difficult of these properties to establish for $\T$ is newtonianity, for which see \cite[Corollary~15.0.2]{adamtt}.
As explained in Section~\ref{subsec:preH}, the natural valuation ring of an $H$-field is definable in its differential field structure without parameters and the ordering of a real closed $H$-field is definable in its field structure without parameters, so the theory $T^{\nl}_{\sm}$ can be formulated more naturally in the language $\{ +, -, \cdot, 0, 1, \der, \les, \ca O \}$ of ordered valued differential fields or alternatively in the language $\{ +, -, \cdot, 0, 1, \der \}$ of differential fields.

One of the main results of \cite{adamtt} is that $T^{\nl}_{\sm}$ is model complete in the language $\{ +, -, \cdot, 0, 1, \der, \les, \ca O \}$ by \cite[Corollary~16.2.5]{adamtt} (in fact, the theory $T^{\nl}$ without the assumption ``small derivation'' is already model complete, and $T^{\nl}_{\sm}$ is one of its two completions \cite[Corollary~16.6.3]{adamtt}).
To be precise, in \cite[Corollary~16.2.5]{adamtt} model completeness is stated with a binary relation $\prece$ replacing the unary predicate $\ca O$, but it is easy to see that $\ca O$ is enough for model completeness ($\prece$ is important for quantifier elimination if multiplicative inversion is not in the language).
But $T^{\nl}_{\sm}$ is not model complete without the valuation ring, because the valuation ring of $\T$ is not universally definable (allowing parameters) in the differential field $\T$ \cite[Corollary~16.2.6]{adamtt}.
Hence, to obtain model completeness for transserial tame pairs, it is critical to put the valuation ring in the language.

\section{Three perspectives on transserial tame pairs}\label{sec:coarsenuncoarsen}

\subsection{Introduction}
\begin{defn}
A pair $(K, L)$ of differential fields is a \deft{transserial tame pair} if:
\begin{enumerate}[label=(TTP\arabic*)]
    \item $K, L \models T^{\nl}_{\sm}$ as differential fields;
    \item $L$ is a proper differential subfield of $K$;
    \item $L$ is tame in $K$ as real closed fields, i.e., $\dot{\ca O} = L + \dot{\cao}$, where $\dot{\ca O}=\conv_{K}(L)$ and $\dot{\cao}$ is its maximal ideal.
\end{enumerate}    
\end{defn}
Although a transserial tame pair $(K, L)$ was defined above as a pair of differential fields, it could have been defined similarly as a pair of ordered valued differential fields.
Regardless, it turns out that $K$ must be an elementary extension of $L$ as differential (equivalently, ordered valued differential) fields, as we now show.
We use this fact often in what follows.

\begin{lem}\label{lem:elemext}
Let $(K, L)$ be a transserial tame pair.
Then $C = C_L$ and so $(K, \ca O)$ is an elementary extension of $(L, \ca O_L)$.
\end{lem}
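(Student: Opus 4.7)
The plan is to show $C_K = C_L$; then the elementary extension follows from the model completeness of $T^{\nl}_{\sm}$ in the language of ordered valued differential fields, since $C_K = C_L$ forces $\ca O_L = \conv_L(C_L) = \conv_L(C_K) = \ca O \cap L$, making $L$ a substructure of $K$ in this language.

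To prove $C_K = C_L$, I exploit that $\ca O$ and $\dot{\ca O}$, being convex valuation rings of the ordered field $K$, are comparable under inclusion. First I rule out $\dot{\ca O} \subseteq \ca O$, equivalently $L \subseteq \ca O$: in that case $L \cap \cao$ would be an ideal of the field $L$ strictly smaller than $L$, hence $\{0\}$, so the residue map $\ca O \to C_K$---a differential ring homomorphism onto $C_K$ with its trivial derivation, by (H2) and small derivation of $K$---would restrict to an injective differential ring homomorphism $L \hookrightarrow C_K$. This forces $\der(\ell) = 0$ for every $\ell \in L$, contradicting the nontriviality of the derivation on $L$ required by $L \models T^{\nl}_{\sm}$ (which includes Liouville closure, hence $\der L = L$). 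So $\ca O \subseteq \dot{\ca O}$, and in particular $C_K \subseteq \dot{\ca O}$.

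Now fix $c \in C_K$. By tameness (TTP3), $c = \ell + \epsilon$ for some $\ell \in L$ and $\epsilon \in \dot{\cao}$; differentiating, $\ell' = -\epsilon'$. The main technical step is to show $\epsilon' \in \dot{\cao}$, i.e., that the coarsening $(K, \dot{\ca O})$ has small derivation. I would derive this from small derivation for $(K, \ca O)$ by verifying that the convex subgroup $\Delta$ of the value group $\Gamma$ corresponding to $\dot{\ca O}$---namely $\conv_{\Gamma}(v(L^{\x}))$---is closed under the asymptotic couple map $\psi$: one has $\psi(v(\ell)) = v(\ell^\dagger) \in v(L^{\x}) \subseteq \Delta$ for every $\ell \in L^{\x}$, as $L$ is closed under logarithmic differentiation, and $H$-type monotonicity of $\psi$ together with convexity of $\Delta$ extend $\psi$-closure to all of $\Delta^{\neq}$. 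Standard results about coarsenings of $H$-asymptotic couples from \cite{adamtt} then transfer small derivation from $\ca O$ to $\dot{\ca O}$; this is the main technical obstacle, though it could alternatively be subsumed into the proof of Lemma~\ref{lem:TTPcoarsenTdhl} provided that lemma is established independently of the present one.

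Granted $\epsilon' \in \dot{\cao}$, we obtain $\ell' \in L \cap \dot{\cao} = \{0\}$ (since any nonzero element of the field $L$ is a unit in the overring $\dot{\ca O}$), so $\ell \in C_L$ and $\epsilon = c - \ell \in C_K$. If $\epsilon \neq 0$, then $\epsilon^{-1} \in C_K \subseteq \dot{\ca O}$ shows $\epsilon \in \dot{\ca O}^{\x}$, contradicting $\epsilon \in \dot{\cao}$. Hence $\epsilon = 0$ and $c = \ell \in L \cap C_K = C_L$, completing the proof that $C_K = C_L$.
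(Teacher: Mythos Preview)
Your argument is correct and follows the same strategy as the paper: use tameness to write $c = \ell + \epsilon$ with $\ell \in L$ and $\epsilon \in \dot{\cao}$, show $\ell' = -\epsilon' \in L \cap \dot{\cao} = \{0\}$, conclude $c = \ell \in C_L$, and then appeal to model completeness of $T^{\nl}_{\sm}$. You differ only in details---you show $\ca O \subseteq \dot{\ca O}$ by contradiction (trivialized derivation on $L$) rather than by exhibiting $a \in L$ with $a' \in C_L^{\times}$ and hence $a \succ 1$, and you explicitly isolate the step $\epsilon' \in \dot{\cao}$ (small derivation of the coarsening) where the paper simply uses it without comment, so if anything you are more careful here.
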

\begin{proof}
First, note that $C \subseteq \ca O \subseteq \dot{\ca O}$.
To observe the latter inclusion, take any $a \in L$ with $a' \in C_L^{\x}$.
Then since $K$ is an $H$-field, we have $a \succ 1$ in $K$, so $|a|>\ca O$ and thus $\ca O \subseteq \dot{\ca O}$.
Let $c \in C$, so $c=a-\varepsilon$ with $a\in L$ and $\varepsilon \in \dot{\cao}$.
Then $\varepsilon'=a'\in L \cap \dot{\cao} = \{0\}$, so $a \in C_L$ and $\varepsilon \in C \cap \dot{\cao} = \{0\}$.
Thus $c=a\in C_L$ and so $C=C_L$.
Hence $K$ is not just an ordered differential field extension of $L$, but even an ordered valued differential field extension of $L$, since $C=C_L$ yields $\ca O \cap L = \ca O_L$.
It remains to appeal to the model completeness of $T^{\nl}_{\sm}$ in the language $\{ +, -, \cdot, 0, 1, \der, \les, \ca O \}$ \cite[Corollary~16.2.5]{adamtt}.
\end{proof}

The paper \cite{pc-preH-gap} studies $\d$-henselian pre-$H$-fields, and the results of that paper are critical to the study of transserial tame pairs.
The rest of this section explains the connection by showing how a transserial tame pair is essentially the same as a $\d$-Hensel-Liouville closed pre-$H$-field whose differential residue field is a model of $T^{\nl}_{\sm}$. This is Corollary~\ref{cor:tamepairequiv}.
The approach is to examine which properties descend from $(K,\ca O)$ to a coarsening $(K,\dot{\ca O})$ or to its differential residue field $\res(K,\dot{\ca O})$, and conversely which can be lifted from $(K,\dot{\ca O})$ and $\res(K,\dot{\ca O})$ to $(K,\ca O)$.
Note that a transserial tame pair $(K,L)$ yields the coarsening $(K,\dot{\ca O})$ of $(K,\ca O)$.
Answering these questions gives more precise results, which in particular show how a model of $T^{\nl}_{\sm}$ that is large in a certain sense, for instance the differential field $\HH$ of hyperseries, yields a transserial tame pair.

Conversely, transserial tame pairs are fundamentally about large models of the theory of $\T$, such as $\HH$.
To see this, note that if $L \subseteq \T$ is a Liouville closed $H$-subfield of $\T$ containing $\R$, then $L$ contains $x$ and therefore $\exp_n(x)$ for every $n$. In particular, $(\T, L)$ is not a transserial tame pair.

\subsection{Coarsening}
In \cite[Section~8.A]{pc-preH-gap}, we constructed an example of a two-sorted structure to which the results of that paper apply by taking an $\aleph_0$-saturated elementary extension of $\T$ and enlarging its valuation ring by all exponentially bounded elements, elaborating on \cite[Example~10.1.7]{adamtt}.
Some technical verifications showed that this coarsening is then a $\d$-Hensel-Liouville closed pre-$H$-field whose differential residue field is a model of $T^{\nl}_{\sm}$.
In fact, by going carefully through the proof, all we actually needed was the coarsened valuation to satisfy a few axioms summarized as follows.
Let $T^{\nl,\dhl}$ be the theory of structures $(K, \ca O, \dot{\ca O})$ in the language $\{ +, -, \cdot, 0, 1, \der, \les, \ca O, \dot{\ca O} \}$ of ordered differential fields expanded by two unary predicates for valuation rings such that
\begin{enumerate}
    \item\label{Tnldhl1} $(K, \ca O) \models T^{\nl}_{\sm}$;
    \item\label{Tnldhl2} $\dot{\ca O} \supseteq \ca O$ is a convex valuation ring of $K$ with $\dot{\ca O} \neq K$;
    \item\label{Tnldhl3} for all $a \in K \setminus \dot{\ca O}$, we have $a^{\dagger} \in K \setminus \dot{\ca O}$;
    \item\label{Tnldhl4} for all (equivalently, some) $a \in K \setminus \ca O^{\x}$ with $a^{\dagger} \asymp a$, we have $a \in \dot{\ca O}^{\x}$.
\end{enumerate}
It follows from \ref{Tnldhl1} together with either \ref{Tnldhl3} or \ref{Tnldhl4} that $\ca O \neq \dot{\ca O}$.
Then \cite[Section~8.A]{pc-preH-gap} shows the following.
\begin{prop}\label{prop:coarsen-Tnldhl}
If $(K, \ca O, \dot{\ca O}) \models T^{\nl,\dhl}$, then $(K, \dot{\ca O})$ is a $\d$-Hensel-Liouville closed pre-$H$-field and the differential residue field of $(K, \dot{\ca O})$ models $T^{\nl}_{\sm}$ as a differential field.
\end{prop}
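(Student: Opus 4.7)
The plan is to verify each defining property of a $\d$-Hensel-Liouville closed pre-$H$-field for $(K, \dot{\ca O})$ and then to show that the differential residue field models $T^{\nl}_{\sm}$. The strategy mirrors \cite[Section~8.A]{preH-gap}, which carries this out for the specific coarsening by exponentially bounded elements in a saturated elementary extension of $\T$. The four axioms of $T^{\nl, \dhl}$ are designed precisely to abstract the features of that coarsening used in the argument, so most of the work is to observe that each step there goes through using only \ref{Tnldhl1}--\ref{Tnldhl4}.

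First I would check the pre-$H$-field axioms. Convexity of $\dot{\ca O}$ (PH1) is axiom \ref{Tnldhl2}. For (PH2), since $C \subseteq \ca O \subseteq \dot{\ca O}$, any $f > \dot{\ca O}$ satisfies $f > C$, so $f' > 0$ by the $H$-field property inherited from $(K, \ca O) \models T^{\nl}_{\sm}$. For (PH3), axiom \ref{Tnldhl3} applied to $1/g$ with $g \dotrel{\prec} 1$ gives $(1/g)^\dagger = -g^\dagger \notin \dot{\ca O}$, so $g^\dagger$ dominates every element of $\dot{\ca O}$; combined with small derivation of $(K, \dot{\ca O})$ (treated next) this yields $f' \dotrel{\prec} g^\dagger$ whenever $f \dotrel{\prece} 1$.

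Next, for small derivation of $(K, \dot{\ca O})$, given $\varepsilon \in \dot{\cao} \subseteq \cao$, the small derivation of $(K, \ca O)$ gives $\varepsilon' \in \cao \subseteq \dot{\ca O}$; to upgrade to $\varepsilon' \in \dot{\cao}$ one uses axioms \ref{Tnldhl3} and \ref{Tnldhl4} to exclude the possibility $\varepsilon^\dagger \dotrel{\asymp} 1$ in that regime. After this, I would verify $\d$-henselianity: (DH1) reduces to linear surjectivity of $\res(K, \dot{\ca O})$, which will follow once that residue field is known to be Liouville closed, and (DH2) is extracted from newtonianity of $(K, \ca O)$ together with the translation of the newton-degree-one setup across the two valuations. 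Liouville closedness of $(K, \dot{\ca O})$ combines real closedness and closure under exponential integration from \ref{Tnldhl1} with closure under integration, which comes for free from $\d$-henselianity via \cite[Lemma~7.1.8]{adamtt}.

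The main obstacle will be the last part: showing that $\res(K, \dot{\ca O})$ is an $\upomega$-free, newtonian, Liouville closed $H$-field with small derivation. The $H$-field and small-derivation structure transfer by the standard residue construction for a pre-$H$-field with small derivation and convex valuation ring. Liouville closedness of the residue field follows by lifting any element to a representative in $\dot{\ca O}$ and solving in $K$, using $\d$-henselianity to refine the solution. The genuinely delicate properties are $\upomega$-freeness and newtonianity of the residue field: both are governed by logarithmic sequences, asymptotic couples, and newton degrees taken with respect to the fine valuation $\ca O$, and the task is to show that these invariants descend correctly modulo $\dot{\ca O}$. This is where axioms \ref{Tnldhl3} and \ref{Tnldhl4}, which control how $\dagger$ interacts with the coarsening, do the heavy lifting, allowing witnesses to newton-degree-one zeros and to $\upomega$-freeness in $(K, \ca O)$ to be transported to $\res(K, \dot{\ca O})$. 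This technical bookkeeping is essentially the content of \cite[Section~8.A]{preH-gap}, and the proposition is the observation that it runs in the axiomatic setting.
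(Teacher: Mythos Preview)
Your proposal is correct and matches the paper's approach: the paper does not give an independent proof but simply observes that the argument of \cite[Section~8.A]{preH-gap} (stated there for the exponentially-bounded coarsening of a saturated elementary extension of $\T$) uses only the abstracted axioms \ref{Tnldhl1}--\ref{Tnldhl4}, which is exactly what you outline. Your sketch has a couple of minor organizational wrinkles (e.g., invoking small derivation of $(K,\dot{\ca O})$ in (PH3) before establishing it, and deriving (DH1) from properties of the residue field that are themselves part of the conclusion), but these are ordering issues rather than gaps, and the underlying argument is the one in \cite{preH-gap}.
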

Note that it is part of the $\d$-henselianity of $(K, \dot{\ca O})$ that $(K, \dot{\ca O})$ has small derivation,
so it makes sense to speak of the differential residue field of $(K, \dot{\ca O})$.

In \cite[Proposition~8.1]{pc-preH-gap}, we stated this for $K$ an $\aleph_0$-saturated elementary extension of $\T$ and $\dot{\ca O}$ the set of exponentially bounded elements of $K$, which satisfies \ref{Tnldhl3}--\ref{Tnldhl4} and $\ca O \neq \dot{\ca O}$.
The saturation was only used to ensure $\dot{\ca O} \neq K$.
This elaborated on \cite[Example~10.1.7]{adamtt}.
However, \cite[Proposition~8.1]{pc-preH-gap} and its proof were stated in terms of the value group and used the machinery of asymptotic couples, which we briefly reviewed in Section~\ref{subsec:newtonomega}.
For easier comparison with \cite[Section~8.A]{pc-preH-gap} we reformulate \ref{Tnldhl1}--\ref{Tnldhl4} in those terms now, but the reader could jump to Lemma~\ref{lem:TTPcoarsenTdhl}, especially if they do not intend to read the later proofs of Lemma~\ref{lem:uncoarsen-newtonian} and \ref{lem:uncoarsen-omegafree}.
By translation into these terms via $\Delta = v(\dot{\ca O}^{\x})$, we have $(K, \ca O, \dot{\ca O}) \models T^{\nl,\dhl}$ if and only if:
\begin{enumerate}[label=(\arabic*$^{\prime}$)]
    \item $(K, \ca O) \models T^{\nl}_{\sm}$;
    \item $\Delta \neq \{0\}$ is a convex subgroup of $\Gamma$;
    \item $\psi(\Gamma\setminus\Delta) \subseteq \Gamma\setminus\Delta$;
    \item $1 \in \psi(\Delta^{\neq})\cap\Delta$, where $1 \in \Gamma^>$ is the unique element of $\Gamma$ satisfying $\psi(1)=1$ (see \cite[Lemma~9.2.15]{adamtt}).
\end{enumerate}
Note that by \cite[Lemma~9.2.25]{adamtt}, $\psi(\Delta^{\neq})\cap\Delta \neq \0$ is equivalent to $\psi(\Delta^{\neq})\subseteq\Delta$, which is used in the proof of \cite[Proposition~8.1]{pc-preH-gap}.

Now we use Proposition~\ref{prop:coarsen-Tnldhl} to show how transserial tame pairs yield $\d$-Hensel-Liouville closed pre-$H$-fields.
\begin{lem}\label{lem:TTPcoarsenTdhl}
Let $(K, L)$ be a transserial tame pair and $\dot{\ca O} \coloneqq \conv_K(L)$.
Then $(K, \ca O, \dot{\ca O}) \models T^{\nl,\dhl}$, $(K, \dot{\ca O})$ is a $\d$-Hensel-Liouville closed pre-$H$-field, and $L$ is a lift of the differential residue field of $(K, \dot{\ca O})$.
\end{lem}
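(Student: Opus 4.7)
The plan is to verify that $(K, \ca O, \dot{\ca O}) \models T^{\nl,\dhl}$ and then invoke Proposition~\ref{prop:coarsen-Tnldhl}. Axiom~\ref{Tnldhl1} is (TTP1). For axiom~\ref{Tnldhl2}, $\dot{\ca O} = \conv_K(L)$ is a convex valuation ring as the convex hull of a subfield in an ordered field, and Lemma~\ref{lem:elemext} gives $C = C_L \subseteq L$, so $\ca O = \conv_K(C) \subseteq \dot{\ca O}$. If $\dot{\ca O} = K$, its maximal ideal would be $\{0\}$, and then (TTP3) would force $K = L$, contradicting (TTP2). For axiom~\ref{Tnldhl4}: if $a \in K \setminus \ca O^\times$ satisfies $a^\dagger \asymp a$, then $\gamma \coloneqq v(a) \neq 0$ is a fixed point of $\psi_K$; by \cite[Lemma~9.2.15]{adamtt} the unique positive such fixed point $1_K$ coincides with the corresponding $1_L \in \Gamma_L$ (also a fixed point of $\psi_K$), so $\gamma = 1_L \in \Gamma_L = \Delta$ and $a \in \dot{\ca O}^\times$.

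The main obstacle is axiom~\ref{Tnldhl3}. I argue by contradiction: take $a \in K^\times$ with $\gamma \coloneqq v(a) \in \Gamma \setminus \Delta$ and assume $v(a^\dagger) = v(\ell_0)$ for some $\ell_0 \in L^\times$. Then $a^\dagger = \ell_0 u$ with $u \in \ca O^\times$; since $(K, \ca O)$ is an $H$-field, $u = c + \eta$ with $c \in C_L^\times$ and $\eta \in \cao$. Closure of $L$ under exponential integration gives $b \in L^\times$ with $b^\dagger = c\ell_0$, and then
\[
(a/b)^\dagger \ =\ a^\dagger - b^\dagger \ =\ \ell_0 \eta.
\]
Now $v(a/b) = \gamma - v(b)$ still lies outside $\Delta$, and since $v(b) \in \Delta$ while $\gamma \notin \Delta$, it lies in the same archimedean class as $\gamma$, so by archimedean-class invariance of $\psi$ in an $H$-asymptotic couple~\cite[Section~9.2]{adamtt} we have $v((a/b)^\dagger) = \psi_K(\gamma) = v(\ell_0)$. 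Comparing with $v(\ell_0 \eta) = v(\ell_0) + v(\eta)$ forces $v(\eta) = 0$, contradicting $\eta \in \cao$; the alternative $\eta = 0$ yields $(a/b)^\dagger = 0$, so $a/b \in C_L$ and $a \in L$, again contradicting $\gamma \notin \Delta$.

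Having verified $(K, \ca O, \dot{\ca O}) \models T^{\nl,\dhl}$, Proposition~\ref{prop:coarsen-Tnldhl} yields that $(K, \dot{\ca O})$ is a $\d$-Hensel-Liouville closed pre-$H$-field. Finally, $L$ lifts $\res(K, \dot{\ca O})$: the residue map $L \to \res(K, \dot{\ca O})$ is injective because for nonzero $\ell \in L$, $1/\ell \in L \subseteq \dot{\ca O}$ forbids $\ell \in \dot{\cao}$; surjective because (TTP3) writes any $a \in \dot{\ca O}$ as $\ell + \varepsilon$ with $\ell \in L$, $\varepsilon \in \dot{\cao}$; and it commutes with the derivation since $L$ is a differential subfield of $K$.
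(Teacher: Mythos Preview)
Your proof is correct and follows the paper's overall plan (verify the axioms of $T^{\nl,\dhl}$, then invoke Proposition~\ref{prop:coarsen-Tnldhl}), but your treatment of axiom~\ref{Tnldhl3} takes a different route. The paper works with the coarse relation: from $a^\dagger \in \dot{\ca O}$, tameness, and closure of $L$ under exponential integration it obtains $b \in L^\times$ with $a^\dagger \dotrel{\sim} b^\dagger$, then invokes \cite[Corollaries~9.1.4 and 9.2.26]{adamtt} to contradict $a \not\dotrel{\asymp} 1$. You instead stay with the fine valuation $v$ and argue directly in the asymptotic couple, using that $\psi$ is constant on archimedean classes to force $v(\eta)=0$; this is more self-contained (no black-box citations) but longer. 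Incidentally, the paper's proof of this lemma does not explicitly verify axiom~\ref{Tnldhl4}; that argument appears instead in the more general Lemma~\ref{lem:largeTnlcoarsendhl}, and yours is essentially the same as the one given there.

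Two small omissions in your argument. First, the equality $\Gamma_L = \Delta$ that you use deserves a line: for $a \in \dot{\ca O}^\times$, tameness gives $a = \ell + \varepsilon$ with $\ell \in L^\times$ and $\varepsilon \in \dot{\cao} \subseteq \cao$, and since $v(\varepsilon) > \Delta \ni v(\ell)$ we get $v(a) = v(\ell) \in \Gamma_L$. Second, your contradiction hypothesis for~\ref{Tnldhl3}, namely $v(a^\dagger) \in \Gamma_L$, does not cover the possibility $a^\dagger \in \dot{\cao}$ (i.e., $v(a^\dagger) > \Delta$); but that case cannot occur, since $|v(a)| > \Delta \ni 1_L$ gives $\psi(v(a)) \leq \psi(1_L) = 1_L \in \Delta$ by $H$-type, so one line disposes of it.
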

\begin{proof}
We verify that $(K, \ca O, \dot{\ca O}) \models T^{\nl,\dhl}$.
For \ref{Tnldhl2}, $\dot{\ca O} \supseteq \ca O$ was shown in the proof of Lemma~\ref{lem:elemext}, and if $\dot{\ca O} = K$, then $K=L$, a contradiction.
For \ref{Tnldhl3}, suppose towards a contradiction that $a \in K \setminus \dot{\ca O}$ but $a^{\dagger} \in \dot{\ca O}$. Then since $L$ is closed under exponential integration and $L$ is tame in $K$, we have $b \in L^{\x}$ with $a^{\dagger} \dotrel{\sim} b^{\dagger}$. But $b \dotrel{\asymp} 1$ and $a \not\dotrel{\asymp} 1$, so $a^{\dagger} \not\dotrel{\sim} b^{\dagger}$ by \cite[Corollaries~9.1.4 and 9.2.26]{adamtt}, a contradiction.
For \ref{Tnldhl4}, let $a \in K \setminus \ca O^{\x}$ with $a^{\dagger}\asymp a$. By the uniqueness in \cite[Lemma~9.2.15]{adamtt}, there exists $b \in L$ with $a \asymp b$. It follows that $a \in \dot{\ca O}^{\x}$.
\end{proof}
Note that the maximal ideal $\dot{\cao}$ of $\dot{\ca O}$ is $\dot{\cao} = \{ \varepsilon \in K : 0\les |\varepsilon|<L^> \}$ and satisfies $\der\dot{\cao} \subseteq \dot{\cao}$.
Alternatively, we can obtain Lemma~\ref{lem:TTPcoarsenTdhl} as a consequence of the following generalization.

\begin{lem}\label{lem:largeTnlcoarsendhl}
Suppose that $K \models T^{\nl}_{\sm}$, $L \prece K$, and there is $a \in K$ with $a>L$.
Let $\dot{\ca O} \coloneqq \conv_K(L)$.
Then $(K, \ca O, \dot{\ca O}) \models T^{\nl,\dhl}$, $(K, \dot{\ca O})$ is a $\d$-Hensel-Liouville closed pre-$H$-field, and there is a differential subfield $L^* \subseteq \dot{\ca O}$ such that $L \prece L^* \prece K$ and $(K, L^*)$ is a transserial tame pair.
\end{lem}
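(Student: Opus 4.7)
The plan is to verify the four axioms of $T^{\nl,\dhl}$ for $(K, \ca O, \dot{\ca O})$, apply Proposition~\ref{prop:coarsen-Tnldhl} for the pre-$H$-field conclusion, and then use Proposition~\ref{adh:7.1.3} to extend $L$ to a lift $L^* \subseteq \dot{\ca O}$ of the differential residue field.

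Two consequences of elementarity underpin the verification. First, by the $T^{\nl}_{\sm}$-theorem $\exists y\,(y > C)$, some $y_0 \in L$ satisfies $y_0 > C_L$; elementarity applied to $\forall c(\der c = 0 \to -y_0 < c < y_0)$ with parameter $y_0$ transfers this to $K$, yielding $|C_K| < y_0$ and hence $C_K \subseteq \conv_K(L) = \dot{\ca O}$. Second, picking $\epsilon \in \cao_L^{>}$ (nonempty since $L$ has nontrivial valuation), the sentence $\neg\exists c(\der c = 0 \wedge 0 < |c| < \epsilon)$ holds in $L$ and transfers to $K$, whence $C_K \cap \dot{\cao} = \{0\}$.

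Axiom (Tnldhl1) is given; (Tnldhl2) follows from $C_K \subseteq \dot{\ca O}$ and $a \notin \dot{\ca O}$. For (Tnldhl4), $a^\dagger \asymp a$ with $v(a) \neq 0$ forces $v(a) = 1 \in \Gamma^{>}$, the unique fixed point of $\psi$; elementarity supplies $b \in L^{\x}$ with $v(b) = 1$, so $a \asymp b$, and $C_K \subseteq \dot{\ca O}$ then bounds $a$ and $a^{-1}$ by $L$-elements, giving $a \in \dot{\ca O}^{\x}$. The main obstacle is (Tnldhl3): I argue by contradiction, assuming $a_0 \in K \setminus \dot{\ca O}$ has $a_0^\dagger \in \dot{\ca O}$, WLOG $a_0 > L^{>}$. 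Pick $c \in L^{>}$ with $|a_0^\dagger| \les c$; Liouville closure of $L$ gives $b \in L^{>}$ with $b^\dagger = c$. Then $f = a_0/b > 0$ has $f^\dagger = a_0^\dagger - c \les 0$, so $f' \les 0$; (H1) for $K$ forces $f \les c_0$ for some $c_0 \in C_K^{>}$, and $c_0 \in \dot{\ca O}$ places $a_0 = fb \in \dot{\ca O}$, contradicting $a_0 \notin \dot{\ca O}$.

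Proposition~\ref{prop:coarsen-Tnldhl} now delivers the pre-$H$-field conclusion and $\res(K, \dot{\ca O}) \models T^{\nl}_{\sm}$. Proposition~\ref{adh:7.1.3} extends $L$ to a lift $L^* \subseteq \dot{\ca O}$ of the differential residue field, so $L^* \models T^{\nl}_{\sm}$ as a differential field, $\conv_K(L^*) = \dot{\ca O}$, and $\dot{\ca O} = L^* + \dot{\cao}$. A short argument gives $C_K \subseteq L^*$: for $c \in C_K$, the unique $c^* \in L^*$ with $c^* - c \in \dot{\cao}$ satisfies $(c^*)' \in L^* \cap \dot{\cao} = \{0\}$, so $c^* \in C_{L^*} \subseteq C_K$, and then $c - c^* \in C_K \cap \dot{\cao} = \{0\}$ forces $c = c^* \in L^*$. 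Hence $C_{L^*} = C_K$, yielding $\ca O_{L^*} = L^* \cap \ca O$, so $L \subseteq L^* \subseteq K$ as ordered valued differential fields; model completeness of $T^{\nl}_{\sm}$ upgrades this to $L \prece L^* \prece K$, and $a \notin \dot{\ca O} \supseteq L^*$ gives $L^* \neq K$, completing the verification that $(K, L^*)$ is a transserial tame pair.
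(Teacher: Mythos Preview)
Your proof is correct and follows the same overall strategy as the paper's: verify the four axioms of $T^{\nl,\dhl}$, apply Proposition~\ref{prop:coarsen-Tnldhl}, extend $L$ to a lift $L^*$ via Proposition~\ref{adh:7.1.3}, and finish with model completeness of $T^{\nl}_{\sm}$.

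There are two places where your route differs from the paper's. For axiom~\ref{Tnldhl3}, the paper simply invokes \cite[Lemma~16.6.9]{adamtt}, which gives directly that $a>L$ implies $a^{\dagger}>L$ for an elementary extension $L\prece K$; you instead give a self-contained argument using exponential integration in $L$ together with the $H$-field axiom (H1). Your argument is more elementary and avoids a citation into the large monograph, at the cost of a few extra lines. For the final elementarity chain $L\prece L^*\prece K$, the paper first notes that $(K,L^*)$ is a transserial tame pair and then appeals to Lemma~\ref{lem:elemext} to obtain $C_K=C_{L^*}$; you instead inline that argument, proving $C_K\subseteq L^*$ directly via the lift property and your preliminary observation $C_K\cap\dot{\cao}=\{0\}$. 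Again this is the same idea, just unpacked rather than cited.
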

\begin{proof}
First we verify that $(K, \ca O, \dot{\ca O}) \models T^{\nl,\dhl}$.
Items \ref{Tnldhl1} and \ref{Tnldhl2} are clear from the assumptions.
Let $a \in K \setminus \dot{\ca O}$.
If $a>L$, then $a^{\dagger}>L$ by \cite[Lemma~16.6.9]{adamtt}.
If $0<a<L^>$, then by applying the lemma to $a^{-1}$ we get $a^{\dagger}<L$.
This shows \ref{Tnldhl3}.
For \ref{Tnldhl4}, let $a \in K\setminus \ca O^{\x}$ with $a^{\dagger} \asymp a$.
By \cite[Lemma~9.2.15]{adamtt} we have $b \in L^{\x}$ with $b^{\dagger} \asymp b$, so $a \asymp b$, and thus $a \in \dot{\ca O}^{\x}$.
Hence $(K, \dot{\ca O})$ is a $\d$-Hensel-Liouville closed pre-$H$-field and its differential residue field $\dot{K}\models T^{\nl}_{\sm}$ by Proposition~\ref{prop:coarsen-Tnldhl}.

Next, by Proposition~\ref{adh:7.1.3}, extend $L$ to a lift $L^* \subseteq \dot{\ca O}$ of $\dot{K}$, so $(K, L^*)$ is a transserial tame pair.
Finally, we have $L \prece L^* \prece K$ by Lemma~\ref{lem:elemext} and the model completeness of $T^{\nl}_{\sm}$ in the language $\{ +, -, \cdot, 0, 1, \der, \les, \ca O \}$.
\end{proof}

Lemma~\ref{lem:largeTnlcoarsendhl} shows that a large model of $T^{\nl}_{\sm}$ can be expanded to a transserial tame pair and also that a coarsening of this large model yields a $\d$-Hensel-Liouville closed pre-$H$-field whose differential residue field is a model of $T^{\nl}_{\sm}$. 
In particular, we obtain a claim from the introduction.
\begin{cor}\label{cor:largeTnlexTTP}
Let $K$ be $\HH$, $\No$, or a maximal Hardy field and identify $\T$ with its image under an elementary embedding $\T \to K$.
Then $\T$ can be extended to a differential subfield $\T^* \subseteq \dot{\ca O} = \conv_{K}(\T)$ so that $(K, \T^*)$ is a transserial tame pair.
\end{cor}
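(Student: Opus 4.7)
The plan is to apply Lemma~\ref{lem:largeTnlcoarsendhl} directly, with $L = \T$ and $K$ one of the three structures. To do so, I need to verify the three hypotheses of that lemma: (a) $K \models T^{\nl}_{\sm}$, (b) $\T \prece K$, and (c) there exists $a \in K$ with $a > \T$. Once these are in place, the conclusion supplies the required $\T^* \subseteq \dot{\ca O}$ with $(K, \T^*)$ a transserial tame pair.

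Hypotheses (a) and (b) have already been recorded in the introduction for each of the three cases: for $\HH$ this is the main theorem of \cite{hyperseries-deriv}; for $\No$ this is the main result of \cite{adh-surreals}; and for a maximal Hardy field $H$ this is \cite[Corollary~7.10]{ad-analHf} together with \cite[Theorem~11.19]{adh-maxHftheory}. In each case the embedding $\T \to K$ sends $x$ to a canonical element, and we identify $\T$ with its image, so that $\T \prece K$ as ordered valued differential fields.

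For hypothesis (c), I need a transexponential element in each of the three structures. For $\HH$, the element $\exp_{\omega}(x)$ constructed in \cite{hyperseries} satisfies $\exp_{\omega}(x) > \exp_n(x)$ for all $n$, hence $\exp_{\omega}(x) > \T$ since every element of $\T$ is bounded in absolute value by some $\exp_n(x)$. For $\No$, the ordinal $\exp_{\omega}(\omega) \in \No$ (or any transexponential surreal, e.g.\ obtained from the derivation of \cite{bm-surreals}) likewise exceeds every $\exp_n(\omega)$ and hence all of $\T$ under the embedding $x \mapsto \omega$. For a maximal Hardy field $H$, the existence of a transexponential element $a \in H$, i.e.\ one with $a > \exp_n(x)$ for all $n$, is exactly Boshernitzan's theorem \cite{bosher-neworders,bosher-transexpHf} already cited in the introduction; again this forces $a > \T$.

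With (a), (b), (c) verified, Lemma~\ref{lem:largeTnlcoarsendhl} applies and delivers a differential subfield $\T^* \subseteq \dot{\ca O} = \conv_K(\T)$ with $\T \prece \T^* \prece K$ such that $(K, \T^*)$ is a transserial tame pair, proving the corollary. There is no genuine obstacle here: the content of the corollary lies entirely in Lemma~\ref{lem:largeTnlcoarsendhl} and in the existence of transexponential elements, so this proof is essentially a matter of citing the relevant results case by case.
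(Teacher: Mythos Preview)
Your proof is correct and follows essentially the same approach as the paper: both apply Lemma~\ref{lem:largeTnlcoarsendhl} and reduce the corollary to exhibiting a transexponential element in each of the three cases. The only cosmetic difference is your choice of witness in $\No$: the paper uses the ordinal $\epsilon_0$, which is cleaner than invoking $\exp_{\omega}(\omega)$ (whose meaning in $\No$ requires additional justification).
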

\begin{proof}
For the element $a \in K$ with $a>\T$ in the first two examples, one can take for instance $\exp_{\omega}(x) \in \HH$ and the ordinal $\epsilon_0 \in \No$.
The existence of a transexponential germ in any maximal Hardy field follows from \cite[Corollary~12.24]{bosher-neworders} and \cite[Theorem~1.3]{bosher-transexpHf}.
\end{proof}
Note that the embedding $\T \to K$ when $K$ is a maximal Hardy field is not canonical.
In Section~\ref{sec:hyperseries}, we give a more explicit example of a $\T^* \subseteq \dot{\ca O}_{\HH}$ with $(\HH, \T^*)$ a transserial tame pair. 

\subsection{Uncoarsening}
The rest of this section is dedicated to Proposition~\ref{prop:uncoarsen-dhlTnl}, which involves lifting properties of the differential residue field of a pre-$H$-field with small derivation to the ambient pre-$H$-field equipped with its natural valuation.
Ultimately, Corollary~\ref{cor:tamepairequiv} combines the results of this section to show that transserial tame pairs, models of $T^{\nl,\dhl}$, and $\d$-Hensel-Liouville closed pre-$H$-fields whose differential residue field is a model of $T^{\nl}_{\sm}$ are all equivalent.

To that end, we fix some assumptions for the rest of the section and review some terminology.
Fix an ordered differential field $K$ and a valuation ring $\dot{\ca{O}}$ of $K$ such that $(K, \dot{\ca{O}})$ is a pre-$H$-field with small derivation.
We also have the natural valuation ring $\ca O = \conv_K(C)$ of $K$.
Our goal is to deduce properties of $(K, \ca O)$ from properties of $(K, \dot{\ca O})$ and its ordered differential residue field.

First, since $(K, \dot{\ca O})$ is a pre-$H$-field, we have $C \subseteq \dot{\ca O}$ and thus $\ca O \subseteq \dot{\ca O}$, so $(K, \dot{\ca O})$ is a coarsening of $(K, \ca O)$.
We assume some familiarity with coarsening of valued fields but review the notation we use, following \cite[Section~3.4]{adamtt}, and explain how the derivation interacts with coarsening.
Let $\dot{\cao} \subseteq \cao$ be the maximal ideal of $\dot{\ca{O}}$, so the residue field of $(K, \dot{\ca O})$ is $\dot{K} \coloneqq \dot{\ca O}/\dot{\cao}$.
For $a \in \dot{\ca O}$, we set $\dot{a} \coloneqq a + \dot{\cao} \in \dot{K}$ and construe $\dot{K}$ as a valued differential field with valuation ring $\ca O_{\dot{K}} \coloneqq \{ \dot{a} : a \in \ca O \}$, whose maximal ideal 
is $\cao_{\dot{K}} \coloneqq \{ \dot{a} : a \in \cao \}$, and the induced derivation.
The derivation on $\dot{K}$ makes sense since $(K, \dot{\ca O})$ has small derivation, so the residue map $\dot{\ca O} \to \dot{K}$ is a differential ring homomorphism that satisfies $\dot{a}\ges 0$ whenever $a \ges 0$ in $\dot{\ca O}$.
It follows easily that $(\dot{K}, \ca O_{\dot{K}})$ has small derivation if and only if $(K, \ca O)$ does.
The natural ring homomorphism $\ca O \to \ca O_{\dot{K}}/\cao_{\dot{K}}$ has kernel $\cao$ and satisfies $\dot{a}+\cao_{\dot K}\ges 0$ whenever $a \ges 0$ in $\ca O$, inducing an ordered field isomorphism  $\res(K, \ca O) \to \res(\dot{K}, \ca O_{\dot{K}})$.
If $(K, \ca O)$ has small derivation, then this is an ordered \emph{differential} field isomorphism.
Since $C \subseteq \dot{\ca O}$, the residue map $\dot{\ca O} \to \dot{K}$ is injective on $C$.
Below we need the assumption that $C$ maps moreover onto $C_{\dot{K}}$, in which case we say $(K, \dot{\ca O})$ is \deft{residue constant closed}; this term was used in \cite{pc-preH-gap} but there it also assumed henselianity, which was only for terminological brevity.
It is important that if $(K, \dot{\ca O})$ is residue constant closed, then $\ca O_{\dot{K}} = \conv_{\dot{K}}(\dot{C}) = \conv_{\dot{K}}(C_{\dot{K}})$, so $\ca O_{\dot{K}}$ is the natural valuation ring of $\dot{K}$.
We use this implicitly in the following lemmas.

\begin{lem}\label{lem:uncoarsen-Hf}
Suppose that $(K, \dot{\ca{O}})$ is residue constant closed and $\dot{K}$ is an $H$-field.
Then $K$ is an $H$-field.
\end{lem}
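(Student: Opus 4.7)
The plan is to verify the defining axioms (H1) and (H2) of an $H$-field for $(K, \ca O)$ by pulling the corresponding facts about $\dot K$ back through the residue map $\dot{\ca O} \to \dot K$, using residue constant closedness to lift elements of $C_{\dot K}$ to $C$.

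For (H2), I would take $a \in \ca O$, so $\dot a \in \ca O_{\dot K}$. Applying (H2) in $\dot K$ gives $\dot a = \dot c_0 + u$ with $\dot c_0 \in C_{\dot K}$ and $u \in \cao_{\dot K}$; residue constant closedness supplies $c \in C$ whose image in $\dot K$ is $\dot c_0$, so $a - c \in \ca O$ lies in the kernel of the composite $\ca O \to \ca O_{\dot K} \to \res(K, \ca O)$, identified with $\cao$ in the paragraph preceding the statement. Thus $a = c + (a-c) \in C + \cao$.

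For (H1), take $a \in K$ with $a > C$. If $a > \dot{\ca O}$, then axiom (PH2) for the pre-$H$-field $(K, \dot{\ca O})$ gives $a' > 0$ directly. Otherwise, convexity of $\dot{\ca O}$ (together with $a > 0$) forces $a \in \dot{\ca O}$, and the task reduces to showing $\dot a > C_{\dot K}$ in $\dot K$. Given $c \in C$, the inequality $a > c + 2$ yields $a - c > 2$; since $1 \in \dot{\ca O}^{\x}$, this forces $a - c \notin \dot{\cao}$, so $\dot a - \dot c$ is strictly positive in $\dot K$. Residue constant closedness says every element of $C_{\dot K}$ arises as such a $\dot c$, so $\dot a > C_{\dot K}$. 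Applying (H1) in $\dot K$, $(\dot a)' > 0$; this equals $\dot{a'}$ because the residue map is a differential ring homomorphism, and hence $a' > 0$ in $K$ (the strictly positive residue forces $a' \notin \dot{\cao}$, so the positive representative is $a'$ itself).

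The whole argument is bookkeeping via the standard coarsening formalism, together with the elementary characterization of when an element of $\dot{\ca O}$ has strictly positive residue; I anticipate no obstacle, since every needed tool has been assembled in the paragraphs immediately preceding the statement.
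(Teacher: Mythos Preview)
Your proof is correct and follows essentially the same route as the paper: verify (H2) by lifting the constant part of $\dot a$ via residue constant closedness, and verify (H1) by splitting into the cases $a>\dot{\ca O}$ (handled by (PH2)) and $a\in\dot{\ca O}$ (handled by (H1) for $\dot K$ after checking $\dot a>C_{\dot K}$). The paper is terser about why $\dot a>C_{\dot K}$ and why $\dot{a'}>0$ forces $a'>0$, while you spell these out with the $c+2$ trick and the observation $a'\notin\dot{\cao}$, but the arguments are the same.
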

\begin{proof}
Let $a \in \ca O$.
We need to find $c \in C$ with $a-c \in \cao$.
Since $\dot{K}$ is an $H$-field, we have $b \in \ca O$ and $\varepsilon \in \cao$ such that $\dot{a}=\dot{b}+\dot{\varepsilon}$ and $\dot{b}'=0$. 
Since $(K, \dot{\ca O})$ is residue constant closed, we have $c \in C$ such that $\dot{c}=\dot{b}$.
It follows that $a-c \in \cao$.

Now let $a \in K$ with $a>C$.
We need to show that $a'>0$.
Since $(K, \dot{\ca O})$ is a pre-$H$-field, if $a>\dot{\ca O}$, then $a'>0$.
If $a \in \dot{\ca O}$, then $\dot{a} > C_{\dot{K}}=\dot{C}$, so $\dot{a}' > 0$ since $\dot{K}$ is an $H$-field, and hence $a'>0$.
Therefore $K$ is an $H$-field.
\end{proof}

In the technical next two lemmas, it is convenient to view the coarsening $(K, \dot{\ca O})$ of $(K, \ca O)$ through the lens of the value group instead of the valuation ring, which makes arguments with asymptotic couples possible.
Then letting $\Delta \coloneqq v(\dot{\ca O}^{\x})$, a convex subgroup of the value group $\Gamma$ of $(K, \ca O)$, $(K, \dot{\ca O})$ is the coarsening of $(K, \ca O)$ by $\Delta$, with valuation $\dot{v} \colon K^{\x} \to \Gamma/\Delta$.
The valuation of $(\dot{K}, \ca O_{\dot{K}})$ is $v \colon \dot{K}^{\x} \to \Delta$ defined by $v\dot{a}=va$, where $a \in \dot{\ca O}^{\x}$.

\begin{lem}\label{lem:uncoarsen-newtonian}
Suppose that $(K, \dot{\ca O})$ is $\d$-henselian and $\dot{K}$ is a newtonian $H$-field.
Then $K$ is a newtonian $H$-field.
\end{lem}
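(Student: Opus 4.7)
The plan proceeds in two stages: first verify that $K$ is an $H$-field, then establish newtonianity of $(K,\ca O)$.

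For the first stage, I would apply Lemma~\ref{lem:uncoarsen-Hf}, which requires that $(K,\dot{\ca O})$ be residue constant closed. To see this, use Proposition~\ref{adh:7.1.3}: since $(K,\dot{\ca O})$ is $\d$-henselian, the trivial differential subfield $\Q \subseteq \dot{\ca O}$ extends to a lift $L^* \subseteq \dot{\ca O}$ of $\dot K$ as differential fields. Given $\bar c \in C_{\dot K}$, its preimage $c \in L^*$ satisfies $c' \in L^*$ with $\dot{c'} = \bar c' = 0$, forcing $c' = 0$, so $c \in C$. Thus $C$ surjects onto $C_{\dot K}$, and Lemma~\ref{lem:uncoarsen-Hf} yields that $K$ is an $H$-field.

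For newtonianity, let $P \in K\{Y\}$ with $\ndeg P = 1$; I seek $y \in \ca O$ with $P(y) = 0$. The strategy is descent-and-lift: reduce to a problem about $\bar P \in \dot K\{Y\}$, solve it by newtonianity of $\dot K$, then lift the solution using $\d$-henselianity of $(K,\dot{\ca O})$. Concretely, pick an active $\phi \in K^{\x}$ for $(K,\ca O)$ realizing $\ddeg P^{\phi} = \ndeg P = 1$; after replacing $P$ by $P^\phi$ and rescaling by a unit, I may assume $\ddeg P = 1$ and $P \asymp 1$. The key claim is that $\phi$ remains active for $(\dot K, \ca O_{\dot K})$ and that, under the residue map, the scaled polynomial descends to $\bar P \in \dot K\{Y\}$ with $\ndeg_{\dot K} \bar P = 1$. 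Newtonianity of $\dot K$ then produces $\bar y \in \ca O_{\dot K}$ with $\bar P(\bar y) = 0$; lifting to $y_0 \in \ca O$ and shifting to $P_{+y_0}(Y) \coloneqq P(y_0 + Y)$, axiom (DH2) applied in $(K, \dot{\ca O})$ should yield $\varepsilon \in \dot{\cao}$ with $P_{+y_0}(\varepsilon) = 0$. Then $y \coloneqq y_0 + \varepsilon \in \ca O$ is the desired zero of $P$.

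The main obstacle is the interaction between the two valuations $v$ and $\dot v$. First, one must verify that the asymptotic-couple machinery behaves coherently across the coarsening: since $(K,\dot{\ca O})$ is $\d$-henselian with $\dot K$ newtonian, some form of $\psi(\Delta^{\neq}) \subseteq \Delta$ (where $\Delta = v(\dot{\ca O}^{\x})$) must hold, so that an active $\phi$ in $(K,\ca O)$ descends to an active element of $(\dot K, \ca O_{\dot K})$ and newton degree is preserved under the residue map. Second, one must translate the Hensel-type size estimates of the $\ndeg = 1$ situation from the fine valuation $v$ to the coarse valuation $\dot v$, so that $(P_{+y_0})_0 \dotrel{\prec} 1$ and $(P_{+y_0})_1 \dotrel{\asymp} P_{+y_0} \dotrel{\asymp} 1$ to invoke (DH2); this hinges on the approximate zero $\bar y$ being good enough that the linear part of $P_{+y_0}$ dominates in the $\dot{\ca O}$-sense, which should follow from $\ndeg P = 1$ together with the compatibility already used.
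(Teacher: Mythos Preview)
Your proposal is correct and follows the same descent-and-lift strategy as the paper: reduce modulo $\dot{\cao}$ to a newton-degree-one problem in $\dot K$, solve it there, then refine the approximate zero via (DH2). The only minor differences are that the paper obtains residue constant closedness by direct citation of \cite[Lemmas~7.1.8 and 9.4.10]{adamtt} rather than via a lift, and it makes explicit the step you flag as an obstacle---after checking that $\psi(\Gamma^{\neq})$ has no maximum, one increases $v\phi$ into $\Delta = v(\dot{\ca O}^{\x})$ so that $\phi \dotrel{\asymp} 1$ and $\dot\phi$ is active in $\dot K$.
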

\begin{proof}
We can assume that $\Delta \neq \{0\}$.
Since $(K, \dot{\ca O})$ is $\d$-henselian, it is residue constant closed \cite[Lemmas~7.1.8 and 9.4.10]{adamtt}, so by the previous lemma $K$ is an $H$-field.
Let $(\Gamma, \psi)$ be its asymptotic couple.
It follows that the asymptotic couple of $\dot{K}$ is $(\Delta, \psi|_{\Delta^{\neq}})$, since $\ca O_{\dot{K}}$ is the natural valuation ring of $\dot{K}$.
If there is $\gamma \in \Gamma^{\neq}$ with $\max\psi(\Gamma^{\neq}) = \psi(\gamma)$, then for any $\delta\in \Delta^{\neq}$ with $0<|\delta|<|\gamma|$, we have $\psi(\gamma)=\psi(\delta)\in\Delta$, so $\psi(\delta)$ is the maximum of $\psi(\Delta^{\neq})$, contradicting that $\dot{K}$ is newtonian.
Hence $\psi(\Gamma^{\neq})$ has no maximum.

Let $P \in K\{Y\}$ with $\ndeg P = 1$ and take $\phi \in K$ with $v\phi \les \psi(\gamma)$ for some $\gamma \in \Gamma^{\neq}$ and $\ddeg P^{\phi} = \ndeg P$.
To see that $K$ is newtonian, we need to find a zero of $P$ in $\ca O$.
We can arrange by scaling $P$ that $vP=0$ and by increasing $v\phi$ that $v\phi \in \Delta$, so $\phi \dotrel{\asymp} 1$ and thus $P^{\phi} \dotrel{\asymp} P \asymp 1$ by \cite[Lemma~11.1.1]{adamtt}.
Hence $\dot{(P^{\phi})}=\dot{P}^{\dot{\phi}}$, where $\dot{P}$ and $\dot{(P^{\phi})}$ are the nonzero differential polynomials obtained by applying the residue map $\dot{\ca O} \to \dot{K}$ to the coefficients of $P$ and $P^{\phi}$, respectively, and thus $\ddeg \dot{P}^{\dot{\phi}} = \ddeg P = 1$.
It follows that $\ndeg \dot{P} = 1$, so since $\dot{K}$ is newtonian, we have $a \in \ca O$ with $\dot{P}(\dot{a})=0$.
That is, $P^{\phi}(a)=P(a) \dotrel{\prec} 1$.
Let $P^{\phi}_{+a}$ denote the differential polynomial $P^{\phi}(a+Y) \in K^{\phi}\{Y\}$.
Then $\ddeg P^{\phi}_{+a} = \ddeg P^{\phi}=1$ by \cite[Lemma~6.6.5(i)]{adamtt} and $P^{\phi}_{+a} \asymp P^{\phi} \dotrel{\asymp} 1$ by \cite[Lemma~4.5.1(i)]{adamtt},
so $ (P^{\phi}_{+a})_1 \asymp P^{\phi}_{+a} \dotrel{\asymp} 1$.
Since $(K, \dot{\ca O})$ is $\d$-henselian, we have $b \in \dot{\cao}$ with $P^{\phi}(a+b)=P(a+b)=0$.
It remains to note that $a+b \in \ca O$.
\end{proof}
Lemma~\ref{lem:uncoarsen-newtonian} is similar to \cite[Lemma~1.7.6]{adh-normalizationADA}.
The latter assumes that $(K,\ca O)$ is an $H$-asymptotic field such that $\psi(\Gamma^{\neq})$ has no greatest element, where $\ca O \neq K$ is an arbitrary valuation ring of $K$.
In contrast, Lemma~\ref{lem:uncoarsen-newtonian} does not make any assumptions on $(K,\ca O)$ except $\ca O=\conv_K(C)$, but the first paragraph of the proof establishes that $K$ satisfies the above assumptions.
Then \cite[Lemma~1.7.6]{adh-normalizationADA} yields that $K$ is newtonian.
Unlike the more restrictive setting of Lemma~\ref{lem:uncoarsen-newtonian}, \cite[Lemma~1.7.6]{adh-normalizationADA} does not require $K$ to be ordered.

\begin{lem}\label{lem:uncoarsen-omegafree}
Suppose that $(K, \dot{\ca O})$ is residue constant closed and $\dot{K}$ is an $\upomega$-free $H$-field.
Then the $H$-field $K$ is $\upomega$-free.
\end{lem}
\begin{proof}
We can assume that $\Delta\neq\{0\}$, so the asymptotic couple of $\dot{K}$ is $(\Delta, \psi|_{\Delta^{\neq}})$ as in the previous proof.
Since $\psi(\Delta^{\neq})$ has no maximum, neither does $\psi(\Gamma^{\neq})$ as before.
Let $f \in K$. We need to find $g \succ 1$ in $K$ with $f - 2(g^{\dagger\dagger})' + (g^{\dagger\dagger})^2 \succe (g^{\dagger})^2$.
First, suppose that $f \dotrel{\succ} 1$.
Take any $g \in \dot{\ca O}$ with $g \succ 1$, so $\dot{g}'>0$.
Then $g$ satisfies $g^{\dagger} \dotrel{\asymp} 1$, so $g^{\dagger\dagger} \dotrel{\prece} 1$ and $(g^{\dagger\dagger})' \dotrel{\prece} 1$.
Hence $f - 2(g^{\dagger\dagger})' + (g^{\dagger\dagger})^2 \dotrel{\sim} f \dotrel{\succ} (g^{\dagger})^2$.
Now suppose that $f \dotrel{\prece} 1$.
Since $\dot{K}$ is $\upomega$-free, we have $g \in \dot{\ca O}$ with $\dot{g} \succ 1$ and $\dot{f} - 2(\dot{g}^{\dagger\dagger})' + (\dot{g}^{\dagger\dagger})^2 \succe (\dot{g}^{\dagger})^2$, so $g \succ 1$ and $f - 2(g^{\dagger\dagger})' + (g^{\dagger\dagger})^2 \succe (g^{\dagger})^2$.
\end{proof}
Combining the previous three lemmas yields:

\begin{prop}\label{prop:uncoarsen-dhlTnl}
If $(K, \dot{\ca O})$ is a $\d$-Hensel-Liouville closed pre-$H$-field and $\dot{K} \models T^{\nl}_{\sm}$, then $K \models T^{\nl}_{\sm}$.
\end{prop}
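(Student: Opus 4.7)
The proof is an assembly of the three preceding lemmas together with unpacking the definition of $T^{\nl}_{\sm}$. The plan is first to verify the hypothesis of ``residue constant closedness'' needed by Lemmas~\ref{lem:uncoarsen-Hf} and~\ref{lem:uncoarsen-omegafree}: since $(K, \dot{\ca O})$ is $\d$-henselian, it is residue constant closed, as noted at the start of the proof of Lemma~\ref{lem:uncoarsen-newtonian} via \cite[Lemmas~7.1.8 and 9.4.10]{adamtt}. Consequently $\ca O_{\dot K}$ is the natural valuation ring of $\dot K$, so the assumption $\dot K \models T^{\nl}_{\sm}$, interpreted in the canonical way, says precisely that $(\dot K, \ca O_{\dot K})$ is an $\upomega$-free, newtonian, Liouville closed $H$-field with small derivation.

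With these hypotheses in place, I would then run through the axioms of $T^{\nl}_{\sm}$ one by one. Lemma~\ref{lem:uncoarsen-Hf} gives that $K$ is an $H$-field; Lemma~\ref{lem:uncoarsen-newtonian} gives newtonianity; Lemma~\ref{lem:uncoarsen-omegafree} gives $\upomega$-freeness. Small derivation for $(K, \ca O)$ follows from small derivation for $(\dot K, \ca O_{\dot K})$ via the equivalence recorded just before Lemma~\ref{lem:uncoarsen-Hf}. Finally, Liouville closedness of $K$ is immediate from the hypothesis that $(K, \dot{\ca O})$ is $\d$-Hensel-Liouville closed, since that notion is defined so that $K$ is real closed, closed under integration, and closed under exponential integration, conditions which depend only on $K$ as an ordered differential field and not on the choice of valuation ring.

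There is no real obstacle left; the work has been done in the three lemmas. The only subtle point is the bookkeeping reminder that $T^{\nl}_{\sm}$ is formulated with respect to the \emph{natural} valuation, so one must invoke residue constant closedness to identify $\ca O_{\dot K}$ with the natural valuation ring of $\dot K$ before the conclusions of the preceding lemmas become available as statements about $\dot K \models T^{\nl}_{\sm}$.
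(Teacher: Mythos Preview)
Your proposal is correct and follows essentially the same approach as the paper: combine the three preceding lemmas after noting that $\d$-henselianity gives residue constant closedness via \cite[Lemmas~7.1.8 and 9.4.10]{adamtt}. The paper's proof is terser, leaving the checks of small derivation and Liouville closedness implicit, but your explicit handling of these is exactly right.
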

\begin{proof}
It remains to note again that if $(K, \dot{\ca O})$ is $\d$-henselian, then it is residue constant closed by \cite[Lemmas~7.1.8 and 9.4.10]{adamtt}.
\end{proof}

The final result of the section combines the previous results, showing that we can pass between models of $T^{\nl,\dhl}$, $\d$-Hensel-Liouville closed pre-$H$-fields whose differential residue field models $T^{\nl}_{\sm}$, and transserial tame pairs. These are different perspectives on essentially the same objects.
\begin{cor}\label{cor:tamepairequiv}
Identifying $\dot{K}$ with a lift inside $\dot{\ca O}$ when necessary, the following are equivalent:
\begin{enumerate}
    \item\label{tamepairequiv1} $(K,\ca O, \dot{\ca O})\models T^{\nl,\dhl}$;
    \item\label{tamepairequiv2} $(K, \dot{\ca O})$ is a $\d$-Hensel-Liouville closed pre-$H$-field with $\dot{\ca O} \neq K$ and $\dot{K} \models T^{\nl}_{\sm}$;
    \item\label{tamepairequiv3} $(K, \dot{K})$ is a transserial tame pair.
\end{enumerate}
\end{cor}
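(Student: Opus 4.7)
The plan is to establish the corollary by cycling through three implications, each of which reduces essentially to an earlier result of this section. The implication \ref{tamepairequiv1} $\Rightarrow$ \ref{tamepairequiv2} is immediate from Proposition~\ref{prop:coarsen-Tnldhl}, with the additional clause $\dot{\ca O} \neq K$ already built into axiom \ref{Tnldhl2} of $T^{\nl,\dhl}$.

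For \ref{tamepairequiv2} $\Rightarrow$ \ref{tamepairequiv3}, I would first apply Proposition~\ref{prop:uncoarsen-dhlTnl} to conclude that $K \models T^{\nl}_{\sm}$. Since $(K, \dot{\ca O})$ is $\d$-henselian, Proposition~\ref{adh:7.1.3} lets us extend the differential subfield $\Q \subseteq \dot{\ca O}$ to a lift $L \subseteq \dot{\ca O}$ of the differential residue field $\dot K$. Then $L$ is isomorphic to $\dot K$ as a differential field, so $L \models T^{\nl}_{\sm}$, and $L$ is a proper differential subfield of $K$ because $L \subseteq \dot{\ca O} \neq K$. For tameness, any $a \in \dot{\ca O}$ decomposes as $a = b + (a-b)$, where $b \in L$ is the unique lift of $\dot a$ and $a - b \in \dot{\cao}$; this shows simultaneously that $\dot{\ca O} \subseteq L + \dot{\cao}$ and that $\dot{\ca O} \subseteq \conv_K(L)$, while the reverse inclusion $\conv_K(L) \subseteq \dot{\ca O}$ is trivial from $L \subseteq \dot{\ca O}$. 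Hence $\conv_K(L) = \dot{\ca O}$ and $(K, L)$ is a transserial tame pair.

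Finally, \ref{tamepairequiv3} $\Rightarrow$ \ref{tamepairequiv1} is Lemma~\ref{lem:TTPcoarsenTdhl}: identifying $\dot K$ with a lift $L \subseteq \dot{\ca O}$ as in the statement, the equality $\conv_K(L) = \dot{\ca O}$ (which again holds automatically for any lift, by the argument just given) ensures that the $\dot{\ca O}$ in the lemma's conclusion matches the one we started with, and the lemma yields $(K, \ca O, \dot{\ca O}) \models T^{\nl,\dhl}$. No genuine obstacle arises; the proof is essentially bookkeeping, checking that the three perspectives really describe the same objects under the natural identification of $\dot K$ with a lift, with the only mildly delicate point being to verify that any lift inside $\dot{\ca O}$ has convex hull equal to $\dot{\ca O}$ so that the implications line up cleanly.
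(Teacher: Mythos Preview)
Your proposal is correct and follows exactly the cycle \ref{tamepairequiv1} $\Rightarrow$ \ref{tamepairequiv2} $\Rightarrow$ \ref{tamepairequiv3} $\Rightarrow$ \ref{tamepairequiv1} via Proposition~\ref{prop:coarsen-Tnldhl}, Proposition~\ref{prop:uncoarsen-dhlTnl} with Proposition~\ref{adh:7.1.3}, and Lemma~\ref{lem:TTPcoarsenTdhl}, just as the paper does. You supply more detail than the paper (notably the explicit check that $\conv_K(L)=\dot{\ca O}$ for a lift $L$, using convexity of $\dot{\ca O}$ from (PH1) and $|\varepsilon|<1$ for $\varepsilon\in\dot{\cao}$), but the approach is identical.
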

\begin{proof}
If \ref{tamepairequiv1}, then \ref{tamepairequiv2} by Proposition~\ref{prop:coarsen-Tnldhl}.
If \ref{tamepairequiv2}, then \ref{tamepairequiv3} by Proposition~\ref{prop:uncoarsen-dhlTnl}, using Proposition~\ref{adh:7.1.3} to identify $\dot{K}$ with a lift inside $\dot{\ca O}$.
If \ref{tamepairequiv3}, then \ref{tamepairequiv1} by Lemma~\ref{lem:TTPcoarsenTdhl}.
\end{proof}

\section{Model completeness}\label{sec:modcomp}

The previous section shows how transserial tame pairs are intrinsically linked to $\d$-Hensel-Liouville closed pre-$H$-fields.
For that reason, in this section we obtain our model completeness result for transserial tame pairs as a byproduct of a more general result about pairs consisting of a $\d$-Hensel-Liouville closed pre-$H$-field and a lift of its differential residue field.
Note that the theory of $\d$-Hensel-Liouville closed pre-$H$-fields is incomplete, but becomes complete after fixing a complete theory of the differential residue field.
Likewise, it becomes model complete after fixing a model complete theory of the differential residue field.
This follows from the two-sorted results \cite[Corollaries~7.3 and 7.4]{pc-preH-gap}, but in this paper we work in a one-sorted context with a predicate for a lift of the differential residue field.

Let $(K, \dot{\ca O})$ be a $\d$-Hensel-Liouville closed pre-$H$-field. 
Since $(K,\dot{\ca O})$ is $\d$-henselian, as explained in Section~\ref{sec:prelim} we can equip it with a lift $\bm k$ of its differential residue field $\res(K, \dot{\ca O})$, so $\bm k \subseteq \dot{\ca O}$ and $\bm k$ maps isomorphically as a differential field onto $\res(K, \dot{\ca O})$ under the residue map $\dot{\ca O} \to \res(K, \dot{\ca O})$.
Given any differential field $\bm k$ that is real closed, linearly surjective, and closed under exponential integration, \cite[Section~8.B]{pc-preH-gap} shows that there exists such a $(K, \dot{\ca O})$ with differential residue field (isomorphic to) $\bm k$.
For instance, $\bm k$ could be a model of $T^{\nl}_{\sm}$ as in the previous section or a closed ordered differential field in the sense of \cite{singer-codf}.
Note that these yield different completions of the theory of $\d$-Hensel-Liouville pre-$H$-fields.
In particular, if $\bm k\not\models T^{\nl}_{\sm}$, then $(K, \ca O) \not\models T^{\nl}_{\sm}$, where $\ca O=\conv_K(C)$ is the natural valuation ring of~$K$.
Recall that the theory of closed ordered differential fields is well-studied in its own right.
For example, it has quantifier elimination \cite{singer-codf}, a cell decomposition \cite{bmr-dim-codf}, and o-minimal open core \cite{point-codf}, and is distal \cite{cubidespoint-topfieldsgenericderiv,forna-kap}.

A few words on notation: We switch from $L$ to $\bm k$ here because we reserve $L$ for structures that look like $K$ in some sense. We also reserve $\ca O$ for the natural valuation of $K$, and hence continue to use $\dot{\ca O}$ for a coarsened valuation, even when $\ca O$ does not appear. Although perhaps ungainly, we hope this reduces confusion. This contrasts with \cite{pc-preH-gap}, where the natural valuation played no role and $\ca O$ was any distinguished valuation ring of~$K$.

Let $T^{\dhl}$ be the theory of $\d$-Hensel-Liouville closed pre-$H$-fields with nontrivial valuation in the language $\ca L_{\OR,\der} \cup \{ \dot{\ca O} \}$.
In this paper we work in a one-sorted context, expanding $K$ by the unary relation $\bm k$, and establish in this and the next section relative results for such structures analogous to the two-sorted results of \cite{pc-preH-gap}.
This yields the claimed model completeness of transserial tame pairs in Corollary~\ref{cor:modcompTTP}.
Additionally, we show in Theorem~\ref{thm:modcompTnldhl} that for a transserial tame pair $(K, L)$, its reduct $(K,\ca O,\dot{\ca O})$ remains model complete without $L$, where $\ca O = \conv_K(C)$ is the natural valuation of $K$ and $\dot{\ca O} = \conv_K(L)$.

Although transserial tame pairs were defined just in the language of pairs of differential fields, the theory $T^{\nl}_{\sm}$ is not model complete without the valuation ring.
Therefore, to obtain model completeness for a transserial tame pair $(K, L)$ we need to expand the language at least by the natural valuation ring of $L$, as otherwise $L$ itself would not be model complete. It turns out that we also need the convex hull of $L$ in~$K$.

Let $\ca L_{\OR, \der} \coloneqq \{ +, -, \cdot, 0, 1, \les, \der \}$ be the language of ordered differential fields and $\ca L_{\res}$ be an expansion of $\ca L_{\OR, \der}$ by predicates or function symbols on $\bm k$, which are always interpreted as trivial outside of $\bm k$.
Let $\ca L_{\lift}^{\dot{\ca O}} \coloneqq \ca L_{\res} \cup \{ \bm k, \dot{\ca O} \}$, where $\bm k$ and $\dot{\ca O}$ are unary predicates.
Fix an $\ca L_{\res}$-theory $T_{\res}$ extending the theory of ordered differential fields that are real closed, linearly surjective, and closed under exponential integration, and let $T^{\dhl}_{\lift}$ be the $\ca L_{\lift}^{\dot{\ca O}}$-theory $T^{\dhl}\cup T_{\res}$ together with axioms expressing that $\bm k$ is a differential subfield of $\dot{\ca O}$ and for every $a \dotrel{\asymp} 1$ in $K$, there exists $u \in \bm k^{\x}$ such that $a \dotrel{\sim} u$ (in other words, $\bm k$ is a lift of the differential residue field of $(K, \dot{\ca O})$).
Note that the differential residue field of a model of $T^{\dhl}$ is real closed, linearly surjective, and closed under exponential integration, so these assumptions on $T_{\res}$ are necessary.
Conversely, for any such $T_{\res}$, $T^{\dhl}_{\lift}$ is consistent by \cite[Section~8.B]{pc-preH-gap} together with Proposition~\ref{adh:7.1.3}.

Our first main result, Theorem~\ref{thm:modcompTlift}, is weaker than what we obtain in the next section, but its proof is a good outline for the proofs of Theorem~\ref{thm:modcompTnldhl} and Theorem~\ref{thm:eqthm}, which require more care.
First, we need two embedding lemmas.
\begin{lem}\label{lem:liftembed}
Suppose that $(K, \dot{\ca O})$ is a valued differential field with small derivation and $\bm k$ is a lift of $\res(K, \dot{\ca O})$.
Let $(E, \dot{\ca O}_E)$ be a valued differential subfield of $(K, \dot{\ca O})$ such that $\bm k_E \coloneqq \bm k \cap E$ is a lift of $\res(E, \dot{\ca O}_E)$.
Let $y \in \bm k \setminus \bm k_E$.
Then $(E\langle y \rangle, \dot{\ca O}_{E\langle y \rangle})$, where $\dot{\ca O}_{E\langle y \rangle} \coloneqq \dot{\ca O}\cap E\langle y \rangle$, satisfies:
\begin{enumerate}
    \item\label{liftembed1} $(E\langle y \rangle, \dot{\ca O}_{E\langle y \rangle})$ has the same value group as $(E, \dot{\ca O}_E)$;
    \item\label{liftembed2} $\bm k_E\langle y \rangle$ is a lift of $\res(E\langle y \rangle, \dot{\ca O}_{E\langle y \rangle})$;
    \item\label{liftembed3} for any valued differential field extension $(M, \dot{\ca O}_M)$ of $(E, \dot{\ca O}_E)$ with small derivation and any lift $\bm k_M \supseteq \bm k_E$ of $\res(M, \dot{\ca O}_M)$, every differential field embedding $\bm k_E\langle y\rangle \to \bm k_M$ over $\bm k_E$ extends to a valued differential field embedding $(E\langle y\rangle, \dot{\ca O}_{E\langle y \rangle}) \to (M, \dot{\ca O}_M)$ over~$E$.
\end{enumerate}
If $K$ and $M$ are additionally ordered fields and $E$ is an ordered subfield of $K$ and $M$, and $\dot{\ca O}$ and $\dot{\ca O}_M$ are convex, then all embeddings are additionally taken to be ordered field embeddings.
\end{lem}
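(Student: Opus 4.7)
The plan is to prove (i) and (ii) together by exploiting linear disjointness of $E$ and $\bm k$ over $\bm k_E$ inside $K$, and then deduce (iii) from the resulting explicit description. The key disjointness claim I need is that a $\bm k_E$-linearly independent subset of $\bm k$ remains $E$-linearly independent in $K$. To verify this, suppose $b_1, \ldots, b_m \in \bm k$ are $\bm k_E$-linearly independent; since the residue map restricts to isomorphisms $\bm k \to \res(K, \dot{\ca O})$ and $\bm k_E \to \res(E, \dot{\ca O}_E)$, their residues are $\res(E, \dot{\ca O}_E)$-linearly independent. Given a hypothetical relation $\sum_i e_i b_i = 0$ with $e_i \in E$ not all zero, choose $\pi \in E^{\x}$ with $v\pi = \min_i v(e_i)$; then each $e_i/\pi \in \dot{\ca O}_E$ with at least one a unit, and taking residues produces a nontrivial dependence among the $\overline{b_i}$ in $\res(K, \dot{\ca O})$, a contradiction.

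For \ref{liftembed1} and \ref{liftembed2}, fix $r$ and set $y_i \coloneqq y^{(i)}$ for $i \les r$. Choose any $\bm k_E$-basis $\{\eta_s\}$ of the $\bm k_E$-algebra $\bm k_E[y_0, \ldots, y_r] \subseteq \bm k$; by disjointness, $\{\eta_s\}$ is also an $E$-basis of $E[y_0, \ldots, y_r]$, and its residues form a $\res(E, \dot{\ca O}_E)$-basis of $\res(E, \dot{\ca O}_E)[\overline{y_0}, \ldots, \overline{y_r}]$. The same scaling-and-residue argument then yields $v(\sum_j e_j \eta_j) = \min_j v(e_j)$ for arbitrary $e_j \in E$, so $\Gamma_{E(y_0, \ldots, y_r)} = \Gamma_E$ and the residue field of $E(y_0, \ldots, y_r)$ is $\res(E, \dot{\ca O}_E)(\overline{y_0}, \ldots, \overline{y_r})$. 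Taking the union over $r$ proves \ref{liftembed1} and identifies $\res(E\langle y\rangle, \dot{\ca O}_{E\langle y\rangle})$ with the image of $\bm k_E\langle y\rangle \subseteq \bm k$ under the residue map, giving \ref{liftembed2}.

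For \ref{liftembed3}, the same disjointness applies in $M$ with $\bm k_M$ in place of $\bm k$. Setting $y^* \coloneqq \phi(y)$ and $\eta_s^* \coloneqq \phi(\eta_s) \in \bm k_M$, the assignment $\sum_j e_j \eta_j \mapsto \sum_j e_j \eta_j^*$ defines an $E$-algebra homomorphism $E[y_0, \ldots, y_r] \to M$ extending both $\id_E$ and $\phi$; it commutes with $\der$ because $\phi$ does, and the analogous valuation formula in $M$ shows it preserves valuations. Passing to fraction fields and taking the union over $r$ produces the required valued differential field embedding $E\langle y\rangle \to M$. In the ordered case, any $\alpha \in K^{\x}$ factors as $\alpha = \pi u$ with $\pi \in E^{\x}$ and $u \in \dot{\ca O}^{\x}$, and convexity of $\dot{\ca O}$ means the sign of $u$ is determined by the sign of $\overline{u}$ in the ordered residue field; since the embedding fixes $E$ and preserves residues through the ordered isomorphism induced by $\phi$, it preserves signs. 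The main obstacle is the clean derivation of linear disjointness from the lift hypothesis; once that is in hand, everything else is bookkeeping with the basis decomposition.
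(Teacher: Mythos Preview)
Your argument is correct and takes a genuinely different route from the paper's. The paper splits into the $\d$-algebraic and $\d$-transcendental cases for $y$ over $\bm k_E$: in the first case it takes a minimal annihilator $P \in \bm k_E\{Y\}$, observes that $P$ remains a minimal annihilator over $E$, and then invokes the uniqueness part of \cite[Theorem~6.3.2]{adamtt} (respectively \cite[Lemma~6.3.1]{adamtt} in the $\d$-transcendental case) for adjoining a residue-field element to a valued differential field; items (i) and (ii) are read off from the same cited results, and the ordered statement is obtained by citing \cite[Lemma~3.1]{preH-gap}. Your approach instead makes the linear disjointness of $E$ and $\bm k$ over $\bm k_E$ explicit and derives the Gauss-type valuation formula $v\big(\sum_j e_j\eta_j\big)=\min_j v(e_j)$ directly, from which (i), (ii), and the valuation-preservation in (iii) all follow by hand, with the ordered case handled by the factorization $\alpha=\pi u$ and convexity. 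The paper's route is shorter because it offloads the structural work to existing lemmas; yours is self-contained and makes transparent exactly why the valuation and order are preserved, at the cost of more bookkeeping. One small slip: in the ordered paragraph you should have $\alpha \in E\langle y\rangle^{\times}$ rather than $\alpha \in K^{\times}$.
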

\begin{proof}
Fix an $(M, \dot{\ca O}_M)$ and $\bm k_M$ as in \ref{liftembed3}, and a differential field embedding $i \colon \bm k_E\langle y\rangle \to \bm k_M$ over $\bm k_E$.
First, suppose that $y$ is $\d$-algebraic over $\bm k_E$ and take a minimal annihilator $P \in \bm k_E\{Y\}^{\neq}$ of $y$ over $\bm k_E$, meaning that $P$ is irreducible, $P(y)=0$, and $Q(y)\neq 0$ for every $Q \in \bm k_E\{Y\}^{\neq}$ of strictly smaller order.
Then $P$ is also a minimal annihilator of $y$ over $E$.
Letting $z \coloneqq i(y)$, $P$ is a minimal annihilator of $z$ over $\bm k_E$, and hence over $E$, which yields a differential field isomorphism $j \colon E\langle y \rangle \to E\langle z \rangle$ over $E$ that extends $i$.
By the Zariski--Abhyankar inequality \cite{abhyankar} (see also \cite[Corollary~3.1.11]{adamtt}), both $(E\langle y \rangle, \dot{\ca O}_{E\langle y \rangle})$ and $(E\langle z \rangle, \dot{\ca O}_{E\langle z \rangle})$ have the same value group as $(E, \dot{\ca O}_E)$, where $\dot{\ca O}_{E\langle z \rangle} \coloneqq \dot{\ca O}_M \cap E\langle z \rangle$.
Then the uniqueness in \cite[Theorem~6.3.2]{adamtt} makes $j$ a valued differential field embedding $j \colon (E\langle y\rangle, \dot{\ca O}_{E\langle y \rangle}) \to (M, \dot{\ca O}_M)$.
If instead $y$ is $\d$-transcendental over $\bm k_E$, i.e., there is no $P \in \bm k_E\{Y\}^{\neq}$ with $P(y)=0$, then again we obtain a differential field isomorphism which is a valued differential field embedding by the Zariski--Abhyankar inequality and the uniqueness of \cite[Lemma~6.3.1]{adamtt}.
Property~\ref{liftembed2} is easy to check.

The additional statement in the ordered context follows from \cite[Lemma~3.1]{pc-preH-gap}.
\end{proof}

Here is the main technical embedding lemma from \cite{pc-preH-gap}, which we need a few times.
Its notation is slightly altered to fit this paper (namely, $\dot{\ca O}$ is specified, replacing the implicit $\ca O$).
Its proof also makes essential use of results from \cite{pc-dh}.
\begin{lem}[{\cite[Lemma~7.1]{pc-preH-gap}}]\label{lem:preHgap7.1}
Suppose that $(K, \dot{\ca O})$ is a $\d$-Hensel-Liouville closed pre-$H$-field, and let $(E, \dot{\ca O}_E)$ be a pre-$H$-subfield of $(K, \dot{\ca O})$ with $\res(E, \dot{\ca O}_E) = \res(K, \dot{\ca O})$.
Let $(L, \dot{\ca O}_L)$ be a $\d$-Hensel-Liouville closed pre-$H$-field such that $(L, \dot{\ca O}_L)$ is $|K|^+$-saturated. 
Then any embedding $(E, \dot{\ca O}_E) \to (L, \dot{\ca O}_L)$ can be extended to an embedding $(K, \dot{\ca O}) \to (L, \dot{\ca O}_L)$.
\end{lem}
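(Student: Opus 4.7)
I would prove the lemma by a Zorn-style maximality argument. Let $\mathcal F$ be the poset, ordered by extension, of all pre-$H$-subfield embeddings $\iota' \colon (E', \dot{\ca O}_{E'}) \to (L, \dot{\ca O}_L)$ with $E \subseteq E' \subseteq K$, extending the given embedding, and such that $\res(E', \dot{\ca O}_{E'}) = \res(K, \dot{\ca O})$ as ordered differential fields. Each of these conditions is preserved under unions along chains, so Zorn yields a maximal $\iota'$; I then want to show $E' = K$.

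Suppose for contradiction $E' \subsetneq K$. Then $E'$ must fail at least one of the closure properties that a $\d$-Hensel-Liouville closed pre-$H$-field enjoys: real closure, closure under integration, closure under exponential integration, or $\d$-henselianity. I would address each failure in turn by picking $a \in K \setminus E'$ of the corresponding kind and extending $\iota'$ to $E' \langle a \rangle$, contradicting maximality. In each case the image of $a$ inside $L$ is pinned down by the same closure in $L$ (which is also $\d$-Hensel-Liouville closed), with $|K|^+$-saturation of $L$ used to realize the correct cut or asymptotic type for $a$ over $\iota'(E')$ when multiple candidates exist. Crucially, the residue equality is preserved at every step: because $\res(E', \dot{\ca O}_{E'}) = \res(K, \dot{\ca O})$ already, one can arrange that the new element $a$ either has residue $0$ or value outside $v(E'^{\times})$, so the extension $E' \langle a \rangle$ produces no new residue classes, by the immediacy-type reasoning underlying Lemma~\ref{lem:liftembed}.

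The main obstacle is the $\d$-henselization case, where $E'$ is already Liouville closed but fails $\d$-henselianity. Here one adjoins some $a \in \dot{\ca O}$ with $P(a) = 0$ for a differential polynomial $P$ over $E'$ having $P_0 \prec 1$ and $P \asymp P_1 \asymp 1$ (the zero exists in $K$ by $\d$-henselianity of $(K, \dot{\ca O})$), and must locate a matching zero of $\iota'(P)$ inside $L$ that tracks how $a$ sits over $E'$ valuation-theoretically. This is where the machinery of \cite{dh} on $\d$-henselizations of pre-$H$-fields becomes essential, in combination with $|K|^+$-saturation of $L$ to realize the correct quantifier-free type of $a$ over $\iota'(E')$. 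The other three cases are comparatively routine: the real closure step uses uniqueness of real closure; the integration and exponential integration steps amount to choosing $v(a)$ or $v(a^{\dagger})$ correctly within the asymptotic couple of $E'$, standard facts for which can be found in \cite[Chapter~9]{adamtt}.
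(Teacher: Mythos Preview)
The paper does not prove this lemma; it is quoted from \cite{preH-gap}, with only the remark that the proof there ``makes essential use of results from \cite{dh}.'' So there is no in-paper argument to compare against, but your outline has a genuine gap.

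The false step is the sentence ``Then $E'$ must fail at least one of the closure properties that a $\d$-Hensel-Liouville closed pre-$H$-field enjoys.'' A maximal $E'$ can be $\d$-Hensel-Liouville closed and still satisfy $E' \subsetneq K$. Nothing in the hypotheses forces the value groups of $(E', \dot{\ca O}_{E'})$ and $(K, \dot{\ca O})$ to coincide, so there may be $a \in K^{\times}$ whose value lies outside the value group of $E'$; and even when the value groups agree, $(E', \dot{\ca O}_{E'})$ may admit a proper immediate extension inside $(K, \dot{\ca O})$, for instance via a divergent pseudocauchy sequence of $\d$-transcendental type. Neither situation is detected by real closedness, closure under (exponential) integration, or $\d$-henselianity of $E'$. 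Your four cases therefore establish only that the maximal $E'$ is $\d$-Hensel-Liouville closed, not that $E' = K$.

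A correct argument must additionally handle adjoining an arbitrary $a \in K \setminus E'$: one splits on whether the value of $a$ is new, whether $a$ is a pseudolimit of a divergent pseudocauchy sequence in $E'$, and so on, using $|K|^+$-saturation of $(L, \dot{\ca O}_L)$ to realize the appropriate cut or pseudolimit there. The machinery of \cite{dh} and the $\d$-Hensel-Liouville closure of \cite[Theorem~6.16]{preH-gap} (as invoked in the proof of Theorem~\ref{thm:eqthm}) are what control the $\d$-algebraic part of these extensions and keep the residue field fixed, but they do not by themselves exhaust~$K$.
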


\begin{thm}\label{thm:modcompTlift}
If $T_{\res}$ is model complete in $\ca L_{\res}$, then $T^{\dhl}_{\lift}$ is model complete in~$\ca L_{\lift}^{\dot{\ca O}}$.
\end{thm}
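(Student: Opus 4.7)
The plan is to invoke the standard saturation test for model completeness: it suffices to show that whenever $\ca K = (K, \dot{\ca O}_K, \bm k)$ and $\ca L = (L, \dot{\ca O}_L, \bm\ell)$ are models of $T^{\dhl}_{\lift}$ with $\ca K \subseteq \ca L$, and $\ca K^* = (K^*, \dot{\ca O}_{K^*}, \bm k^*)$ is a $|L|^+$-saturated elementary extension of $\ca K$, the inclusion $\ca K \hookrightarrow \ca K^*$ extends to an $\ca L_{\lift}^{\dot{\ca O}}$-embedding $\ca L \to \ca K^*$ over $\ca K$.

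The first step exploits the assumed model completeness of $T_{\res}$. The $\ca L_{\res}$-inclusion $\bm k \subseteq \bm\ell$ is then elementary, and since $\bm k^*$ is $|\bm\ell|^+$-saturated as an $\ca L_{\res}$-structure (inherited from the saturation of $\ca K^*$), the inclusion $\bm k \hookrightarrow \bm k^*$ extends to an $\ca L_{\res}$-embedding $j \colon \bm\ell \to \bm k^*$.

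The second step extends $j$ together with the ordered valued differential field inclusion $(K, \dot{\ca O}_K) \hookrightarrow (K^*, \dot{\ca O}_{K^*})$ to an ordered valued differential field embedding $i \colon (K\langle\bm\ell\rangle, \dot{\ca O}_L \cap K\langle\bm\ell\rangle) \to (K^*, \dot{\ca O}_{K^*})$ that maps $\bm\ell$ into $\bm k^*$. I proceed by transfinite induction along an enumeration of $\bm\ell \setminus \bm k$: at each successor stage I invoke Lemma~\ref{lem:liftembed}\ref{liftembed3} in its ordered form, and at limit stages I take the union. The key inductive invariant — that at each intermediate stage the current sub-lift is indeed a lift of the differential residue field of the current intermediate differential subfield — is preserved by Lemma~\ref{lem:liftembed}\ref{liftembed2}.

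The third step applies Lemma~\ref{lem:preHgap7.1} with its ``$K$'' played by $L$, its ``$E$'' by $K\langle\bm\ell\rangle$ (equipped with the induced coarsened valuation), and its target ``$L$'' by $(K^*, \dot{\ca O}_{K^*})$, which is a $|L|^+$-saturated $\d$-Hensel-Liouville closed pre-$H$-field. The residue-field hypothesis $\res(K\langle\bm\ell\rangle, \dot{\ca O}_L) = \res(L, \dot{\ca O}_L)$ holds automatically, since $\bm\ell \subseteq K\langle\bm\ell\rangle$ is already a lift of the latter. The resulting extension $\ca L \to \ca K^*$ is an $\ca L_{\lift}^{\dot{\ca O}}$-embedding for free: it respects $\dot{\ca O}$ as a pre-$H$-field embedding and respects $\bm k$ because the subdomain $K\langle\bm\ell\rangle$ already contains $\bm\ell$ and maps it into $\bm k^*$. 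The only genuine obstacle is the bookkeeping in the second step to verify that the lift condition persists along the transfinite induction; all the substantive algebra is packaged into Lemma~\ref{lem:preHgap7.1}, whose proof relies on the machinery of \cite{preH-gap} and \cite{dh}.
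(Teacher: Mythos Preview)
Your proof is correct and follows essentially the same approach as the paper: both use model completeness of $T_{\res}$ together with saturation to extend the embedding on the lifts, then invoke Lemma~\ref{lem:liftembed} element-by-element (the paper phrases this via Zorn rather than transfinite induction) to obtain an embedding of the field generated by the lift, and finally appeal to Lemma~\ref{lem:preHgap7.1}. The only cosmetic difference is the particular form of the saturation test for model completeness you invoke (starting from a model $\ca K \subseteq \ca L$ and a saturated elementary extension $\ca K^*$, rather than the paper's setup with an arbitrary submodel $(E,\bm k_E)$ and an embedding into a saturated $(K^*,\bm k^*)$), but these are equivalent.
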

\begin{proof}
Here and later we use a standard model completeness test via embeddings due to Robinson \cite{robinson-completetheories,robinson-modeltheoryintro}.
Let $(K, \bm k), (K^*, \bm k^*) \models T^{\dhl}_{\lift}$ such that $(K^*, \bm k^*)$ is $|K|^+$-saturated, and let $(E, \bm k_E)$ be a submodel of $(K, \bm k)$.
Given an $\ca L_{\lift}^{\dot{\ca O}}$-embedding $i \colon (E, \bm k_E) \to (K^*, \bm k^*)$, it suffices to extend $i$ to an $\ca L_{\lift}^{\dot{\ca O}}$-embedding $(K, \bm k) \to (K^*, \bm k^*)$.
Supposing that $T_{\res}$ is model complete in $\ca L_{\res}$, we can extend the $\ca L_{\res}$-embedding $i|_{\bm k_E} \colon \bm k_E \to \bm k^*$ to an $\ca L_{\res}$-embedding $j \colon \bm k \to \bm k^*$.
Given $d \in \bm k \setminus \bm k_E$ and $d^* \coloneqq j(d) \in \bm k^* \setminus i(\bm k_{E^*})$, Lemma~\ref{lem:liftembed} yields an extension of $i$ to a pre-$H$-field isomorphism $(E\langle d\rangle, \dot{\ca O}_{E\langle d\rangle}) \to (E^*\langle d^*\rangle, \dot{\ca O}^*_{E^*\langle d^*\rangle})$ sending $d$ to $d^*$.
Thus by Zorn we have a differential subfield $F$ of $K$ such that $E \cup \bm k \subseteq F$ and an extension of $i$ to an $\ca L_{\lift}^{\dot{\ca O}}$-embedding $i^* \colon (F, \bm k) \to (K^*, \bm k^*)$ with $i^*|_{\bm k} = j$.
It remains to apply Lemma~\ref{lem:preHgap7.1}.
\end{proof}

\begin{cor}\label{cor:modcompTTP}
Let $\ca L_{\res} = \ca L_{\OR, \der} \cup \{ \ca O \}$, where $\ca O$ is interpreted in a transserial tame pair $(K, L)$ as $\ca O_L = \conv_L(C)$.
The theory of transserial tame pairs is model complete in~$\ca L_{\lift}^{\dot{\ca O}}$.
\end{cor}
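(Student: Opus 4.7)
The plan is to derive this corollary directly from Theorem~\ref{thm:modcompTlift} by taking $T_{\res}$ to be $T^{\nl}_{\sm}$ formulated in the language $\ca L_{\res} = \ca L_{\OR,\der} \cup \{\ca O\}$. The first step is to identify the theory of transserial tame pairs in $\ca L_{\lift}^{\dot{\ca O}}$ with $T^{\dhl}_{\lift}$ for this choice of $T_{\res}$: interpreting $\bm k$ as $L$ and $\dot{\ca O}$ as $\conv_K(L)$, the equivalence \ref{tamepairequiv3} $\Leftrightarrow$ \ref{tamepairequiv1} of Corollary~\ref{cor:tamepairequiv} (via Lemma~\ref{lem:TTPcoarsenTdhl} in one direction and Proposition~\ref{prop:uncoarsen-dhlTnl} in the other) shows that models of these two theories coincide, once we identify the differential residue field $\dot{K}$ with its lift $L \subseteq \dot{\ca O}$ provided by Proposition~\ref{adh:7.1.3}.

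Next I would check that $T^{\nl}_{\sm}$ meets the standing hypothesis on $T_{\res}$: its models are Liouville closed $H$-fields, hence real closed and closed under exponential integration, and newtonianity together with $\upomega$-freeness forces linear surjectivity. The crucial model-theoretic input is then the model completeness of $T^{\nl}_{\sm}$ in $\ca L_{\res}$, which is \cite[Corollary~16.2.5]{adamtt} as recalled at the end of Section~\ref{subsec:preH}. This is precisely why the natural valuation ring $\ca O_L = \conv_L(C_L)$ of $L$ is included in $\ca L_{\res}$ (and why the corresponding predicate in $\ca L_{\lift}^{\dot{\ca O}}$, interpreted as trivial off $\bm k$, recovers exactly $\ca O_L$): without it, $T^{\nl}_{\sm}$ would not be model complete and the argument would break down.

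With these verifications in hand, Theorem~\ref{thm:modcompTlift} applies and yields model completeness of $T^{\dhl}_{\lift}$ in $\ca L_{\lift}^{\dot{\ca O}}$, which by the identification in the first step is exactly the model completeness of the theory of transserial tame pairs. I do not expect any substantive obstacle: the real work has been absorbed into Theorem~\ref{thm:modcompTlift}, whose proof relies on Lemma~\ref{lem:preHgap7.1} from \cite{preH-gap} for the coarsened pre-$H$-field extension step and on Lemma~\ref{lem:liftembed} to propagate embeddings of the lift through the full structure. The only mild bookkeeping is confirming that the interpretation of $\ca O$ in a transserial tame pair matches the trivial-off-$\bm k$ convention of $\ca L_{\res}$, which is built into the statement of the corollary.
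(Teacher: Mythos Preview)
Your proposal is correct and follows essentially the same approach as the paper: set $T_{\res}=T^{\nl}_{\sm}$, verify that its models are real closed, linearly surjective, and closed under exponential integration, recall from \cite[Corollary~16.2.5]{adamtt} that $T^{\nl}_{\sm}$ is model complete in $\ca L_{\res}$, and apply Theorem~\ref{thm:modcompTlift}. The paper's proof leaves the identification of the theory of transserial tame pairs with $T^{\dhl}_{\lift}$ implicit (it was set up in Section~\ref{sec:coarsenuncoarsen}), whereas you spell it out via Corollary~\ref{cor:tamepairequiv}; this is just extra detail, not a different route.
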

\begin{proof}
Recall that $T_{\res}=T^{\nl}_{\sm}$ is model complete in $\ca L_{\res}$ \cite[Corollary~16.2.5]{adamtt}.
Note also that any model of $T^{\nl}_{\sm}$ is Liouville closed and linearly surjective \cite[Corollary~14.2.2]{adamtt}, so $T^{\nl}_{\sm}$ satisfies the assumptions on~$T_{\res}$.
\end{proof}

In the next result, recall the theory $T^{\nl,\dhl}$ in the language $\ca L_{\OR,\der} \cup \{ \ca O, \dot{\ca O} \}$ from the previous section, and that we have $(K, \dot{\ca O}) \models T^{\dhl}$ whenever $(K, \ca O, \dot{\ca O}) \models T^{\nl,\dhl}$.
From future work on dimension in $\d$-henselian pre-$H$-fields along the lines of \cite{adh-dimension}, no lift of $\res(K,\dot{\ca O})$ is definable in $(K, \ca O, \dot{\ca O})$, so $(K, \ca O, \dot{\ca O})$ is a proper reduct of $(K, \bm k, \ca O, \dot{\ca O})$ for any such lift $\bm k$.
In comparison with the previous result, which interpreted $\ca O$ only as the natural valuation ring of $L$, here it is interpreted as the natural valuation ring of~$K$.
We give two proofs, first deducing the result from Corollary~\ref{cor:modcompTTP}, then presenting an alternate proof following the same outline as Theorem~\ref{thm:modcompTlift}, but with more care taken regarding the interactions of the two valuation rings, since the lifts are not in the language.

\begin{thm}\label{thm:modcompTnldhl}
The theory $T^{\nl,\dhl}$ is model complete in $\ca L_{\OR,\der} \cup \{ \ca O, \dot{\ca O} \}$.
\end{thm}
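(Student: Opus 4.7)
The plan is to deduce the theorem from Corollary~\ref{cor:modcompTTP} by equipping a submodel inclusion of models of $T^{\nl,\dhl}$ with compatible lifts of their differential residue fields. Given $(K_1, \ca O_1, \dot{\ca O}_1) \subseteq (K_2, \ca O_2, \dot{\ca O}_2)$, I first use Proposition~\ref{adh:7.1.3} to choose a lift $L_1 \subseteq \dot{\ca O}_1$ of $\res(K_1, \dot{\ca O}_1)$, then apply the same proposition to the differential subfield $L_1 \subseteq \dot{\ca O}_2$ to extend it to a lift $L_2 \subseteq \dot{\ca O}_2$ of $\res(K_2, \dot{\ca O}_2)$. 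By Corollary~\ref{cor:tamepairequiv}, each $(K_i, L_i)$ is a transserial tame pair with $L_1 \subseteq L_2$.

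The next step is to verify that this yields a submodel inclusion in the language $\ca L_{\lift}^{\dot{\ca O}}$ of Corollary~\ref{cor:modcompTTP}, where $\ca L_{\res} = \ca L_{\OR,\der}\cup\{\ca O\}$ is interpreted on $L_i$ as $\ca O_{L_i}$. The nontrivial check is $\ca O_{L_1} = L_1 \cap \ca O_{L_2}$: Lemma~\ref{lem:elemext} yields $\ca O_{L_i} = L_i \cap \ca O_i$, and the original inclusion gives $\ca O_1 = K_1 \cap \ca O_2$, so $\ca O_{L_1} = L_1 \cap \ca O_1 = L_1 \cap \ca O_2 = L_1 \cap \ca O_{L_2}$. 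Corollary~\ref{cor:modcompTTP} then yields $(K_1, L_1, \ca O_{L_1}, \dot{\ca O}_1) \preceq (K_2, L_2, \ca O_{L_2}, \dot{\ca O}_2)$ in $\ca L_{\lift}^{\dot{\ca O}}$. To descend this to $\ca L_{\OR,\der} \cup \{\ca O, \dot{\ca O}\}$, I note that the natural valuation ring $\ca O_i$ is quantifier-free definable in the richer structure: tameness gives $\dot{\ca O}_i = L_i + \dot{\cao}_i$, and $\dot{\cao}_i \subseteq \cao_i \subseteq \ca O_i$, so for $a \in K_i$,
\[
a \in \ca O_i \iff a \in \dot{\ca O}_i\ \text{and}\ \exists b\,(b \in L_i \land b \in \ca O_{L_i} \land a - b \in \dot{\cao}_i).
\]
Hence every $\ca L_{\OR,\der}\cup\{\ca O, \dot{\ca O}\}$-formula translates to an $\ca L_{\lift}^{\dot{\ca O}}$-formula with the same truth value in both $(K_i, L_i, \ca O_{L_i}, \dot{\ca O}_i)$, which completes the reduction.

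The main obstacle is coordinating the two valuation rings with the chosen lifts, which is handled cleanly by invoking Proposition~\ref{adh:7.1.3} twice as above. An alternate route, following the outline of Theorem~\ref{thm:modcompTlift}, is a back-and-forth inside a $|K_1|^+$-saturated extension of $(K_2, \ca O_2, \dot{\ca O}_2)$: pick a lift $\bm k \subseteq \dot{\ca O}_1$ on the submodel side, match it with a lift $\bm k^* \subseteq \dot{\ca O}_2$ of the target using Proposition~\ref{adh:7.1.3}, transfer a basis of $\bm k$ to $\bm k^*$ using Lemma~\ref{lem:liftembed}, and finally invoke Lemma~\ref{lem:preHgap7.1} to extend to all of $K_1$. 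The extra bookkeeping there arises because $\bm k$ is not in the language of $T^{\nl,\dhl}$, so the interaction between $\ca O$ and $\dot{\ca O}$ must be tracked at every step of the construction.
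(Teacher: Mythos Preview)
Your argument is correct and matches the paper's first proof almost exactly: choose a lift in the smaller model, extend it to a lift in the larger model via Proposition~\ref{adh:7.1.3}, invoke Corollary~\ref{cor:modcompTTP} to get an elementary inclusion of transserial tame pairs, and then observe that $\ca O_i$ is uniformly definable in the richer structure so the elementary inclusion descends; the paper also gives a second, direct back-and-forth proof along the lines you sketch in your final paragraph. One cosmetic slip: you call the definability of $\ca O_i$ ``quantifier-free'' but the formula you display contains $\exists b$ (and expressing $a-b\in\dot{\cao}_i$ with only the unary predicate $\dot{\ca O}$ also requires a quantifier); this is harmless, since mere definability is all that is needed to transfer the elementary inclusion.
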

\begin{proof}[Proof 1]
First we give a shorter proof relying on the model completeness of transserial tame pairs.
Let $(K, \ca O, \dot{\ca O}) \subseteq (K^*, \ca O^*, \dot{\ca O}^*)$ be two models of $T^{\nl,\dhl}$.
Since $(K, \dot{\ca O})$ and $(K^*, \dot{\ca O}^*)$ are $\d$-henselian, by Proposition~\ref{adh:7.1.3} we can equip $(K, \dot{\ca O})$ with a lift $L \subseteq \dot{\ca O}$ of $\res(K, \dot{\ca O})$, and then extend $L$ to a lift $L^* \subseteq \dot{\ca O}^*$ of $\res(K^*, \dot{\ca O}^*)$.
Then $(K, L) \subseteq (K^*, L^*)$ are transserial tame pairs by Corollary~\ref{cor:tamepairequiv}, so we have $(K, L) \prece (K^*, L^*)$ in the language $\ca L_{\lift}^{\dot{\ca O}}$ with $\ca L_{\res} = \ca L_{\OR, \der} \cup \{ \ca O \}$.
In applying that corollary, the relation symbol $\ca O \in \ca L_{\res}$ is interpreted as the natural valuations of $L$ and $L^*$, but then the same holds interpreting $\ca O \in \ca L_{\res}$ as the natural valuations of $K$ and $K^*$, which makes $(K, \ca O, \dot{\ca O})$ and $(K^*, \ca O^*, \dot{\ca O}^*)$ reducts of $(K, L)$ and $(K^*, L^*)$, respectively.
It follows that $(K, \ca O, \dot{\ca O}) \prece (K^*, \ca O^*, \dot{\ca O}^*)$.
\end{proof}
\begin{proof}[Proof 2]
We also give a longer direct proof elaborating on the proof of Theorem~\ref{thm:modcompTlift}.
For this proof, let $\ca L \coloneqq \ca L_{\OR,\der}$, $\ca L^{\ca O} \coloneqq \ca L \cup \{\ca O\}$, $\ca L^{\dot{\ca O}} = \ca L \cup \{\dot{\ca O}\}$, and $\ca L^{\ca O, \dot{\ca O}} \coloneqq \ca L \cup \{\ca O, \dot{\ca O}\}$.

Let $(K, \ca O, \dot{\ca O}), (K^*, \ca O^*, \dot{\ca O}^*) \models T^{\nl,\dhl}$ such that $(K^*, \ca O^*, \dot{\ca O}^*)$ is $|K|^+$-saturated, and fix a submodel $(E, \ca O_E, \dot{\ca O}_E) \models T^{\nl,\dhl}$ of $(K, \ca O, \dot{\ca O})$.
Given an $\ca L^{\ca O, \dot{\ca O}}$-embedding $i \colon (E, \ca O_E, \dot{\ca O}_E) \to (K^*, \ca O^*, \dot{\ca O}^*)$, we need to extend $i$ to an $\ca L^{\ca O, \dot{\ca O}}$-embedding $(K, \ca O, \dot{\ca O}) \to (K^*, \ca O^*, \dot{\ca O}^*)$.
First, equip $E$ with a lift $\bm k_E \subseteq \dot{\ca O}_E$ of the differential residue field $\res(E, \dot{\ca O}_E)$.
In particular, $(E, \bm k_E)$ is a transserial tame pair by Corollary~\ref{cor:tamepairequiv}, so $(\bm k_E, \ca O_{\bm k_E}) \prece (E, \ca O_E)$ by Lemma~\ref{lem:elemext}.
Then $i$ restricts to an (elementary) $\ca L^{\ca O}$-embedding $i|_{\bm k_E} \colon (\bm k_E, \ca O_{\bm k_E}) \to (K^*, \ca O^*)$ with image contained in $\dot{\ca O}^*$.
Now, we can extend $\bm k_E$ to a lift $\bm k \subseteq \dot{\ca O}$ of the differential residue field $\res(K, \dot{\ca O})$; as before, this is even a lift of $\res(K, \dot{\ca O})$ as an ordered \emph{valued} differential field.

Our aim is first to extend $i|_{\bm k_E}$ to $\bm k$.
Let $d \in \bm k \setminus \bm k_E$ and consider
\[
\tp^{\bm k}(d\mid\bm k_E)\ =\ \{ \varphi(x, \overline{u}) : \varphi(x, \overline{y})\in \ca L,\ \overline{u} \in \bm k_E^{|\overline{y}|},\ \bm k \models \varphi(d,\overline{u}) \}.
\]
Note that $\tp^{\bm k}(d\mid\bm k_E) = \tp^{K}(d\mid\bm k_E)$, since $(\bm k, \ca O_{\bm k}) \prece (K, \ca O)$.
By saturation and the model completeness of $T^{\nl}_{\sm}$ we can realize the type
\[
\{ \varphi(x, i(\overline{u})) : \varphi(x, \overline{u})\in \tp^{\bm k}(d\mid\bm k_E) \} \cup \{ x \in \dot{\ca O} \}
\]
in $(K^*, \ca O^*, \dot{\ca O}^*)$ by an element $d^* \in \dot{\ca O}^*$.
This yields an $\ca L$-isomorphism $\bm k_E\langle d \rangle \to i(\bm k_E)\langle d^*\rangle$ extending $i|_{\bm k_E}$, which is moreover elementary as a partial $\ca L^{\ca O}$-map $(K, \ca O) \to (K^*, \ca O^*)$.
Thus by Zorn we get an elementary $\ca L^{\ca O}$-embedding $i^* \colon (\bm k, \ca O_{\bm k}) \to (K^*, \ca O^*)$ with image contained in~$\dot{\ca O}^*$.

Now we use Lemma~\ref{lem:liftembed} as in the proof of Theorem~\ref{thm:modcompTlift} to get a differential subfield $F$ of $K$ with $E \cup \bm k \subseteq F$ and an $\ca L^{\dot{\ca O}}$-embedding $j \colon (F, \dot{\ca O}_F) \to (K^*, \dot{\ca O}^*)$ extending $i$ and $i^*$, where $\dot{\ca O}_F = \dot{\ca O} \cap F$.
Note that $\ca O_F = \ca O \cap F$, since $C_F = C_{\bm k} = C_K$.
Next we check that $j$ is moreover an $\ca L^{\ca O, \dot{\ca O}}$-embedding.
Let $a \in F$. It is clear that if $a \in \ca O_F$, then $j(a) \in \ca O^*$, so it remains to show that if $a \notin \ca O_F$, then $j(a) \notin \ca O^*$.
First, suppose that $|a|>\bm k$. In that case, $|j(a)|>i^*(\bm k)$, so $j(a) \notin \ca O^*$.
Second, suppose that $a \in \dot{\ca O}_F \setminus \ca O_F$. Then $a = u + \varepsilon$, with $u \in \bm k$ and $\varepsilon \in \dot{\cao}_F$, so $j(a)=i^*(u)+j(\varepsilon)$. (Recall, $\dot{\cao}_F=\{ a \in F : |a|<\bm k^> \}$.) Moreover, $|u|>C_{K}$, so also $|i^*(u)|>C_{K^*}$ by the construction of $i^*$. Thus $i^*(u) \notin \ca O^*$, so $j(a) \notin \ca O^*$.
This concludes the proof that $j \colon (F, \ca O_F, \dot{\ca O}_F) \to (K^*, \ca O^*, \dot{\ca O}^*)$ is an $\ca L^{\ca O, \dot{\ca O}}$-embedding.

Finally, we appeal to Lemma~\ref{lem:preHgap7.1} to extend $j$ to an $\ca L^{\dot{\ca O}}$-embedding $(K, \dot{\ca O}) \to (K^*, \dot{\ca O}^*)$. The same reasoning as for $j$ shows that $j^*$ is even an $\ca L^{\ca O, \dot{\ca O}}$-embedding.
\end{proof}
Theorem~\ref{thm:modcompTnldhl} yields Theorem~\ref{thmint:HNomodcomp} in the introduction about the structures $(\HH, \ca O_{\HH}, \dot{\ca O}_{\HH})$, $(\No, \ca O_{\No}, \dot{\ca O}_{\No})$, and $(H, \ca O_{H}, \dot{\ca O}_H)$ for $H$ any maximal Hardy field by Lemma~\ref{lem:largeTnlcoarsendhl}.

To conclude the section, we now improve the above model completeness results to model companion results.
Let $T_{\res,0}$ be an $\ca L_{\res}$-theory, and $T^{\dhl}_{\lift,0}$ be the $\ca L_{\lift}^{\dot{\ca O}}$-theory of structures $(K, \bm k, \dot{\ca O})$ expressing that $(K, \dot{\ca O})$ is a pre-$H$-field with gap~$0$, $\bm k$ is a lift of $\res(K, \dot{\ca O})$, and $\bm k \models T_{\res,0}$.

\begin{lem}
Suppose that every model of $T_{\res,0}$ can be extended to a model of $T_{\res}$.
Then every model of $T^{\dhl}_{\lift,0}$ can be extended to a model of  $T^{\dhl}_{\lift}$.
\end{lem}
\begin{proof}
Let $(K, \bm k, \dot{\ca O}) \models T^{\dhl}_{\lift,0}$.
First, extend $\bm k$ to $\bm k_L \models T_{\res}$.
Next, using \cite[Corollary~3.4 and Theorem~6.16]{pc-preH-gap}, extend $(K, \dot{\ca O})$ to a $\d$-Hensel-Liouville closed pre-$H$-field $(L, \dot{\ca O}_L)$ so that $\res(L, \dot{\ca O}_L) \cong \bm k_L$ as ordered differential fields over $\bm k$.
Using Proposition~\ref{adh:7.1.3} to identify $\bm k_L$ with a lift inside $\dot{\ca O}_L$ extending $\bm k$ yields an extension $(L, \bm k_L, \dot{\ca O}_L) \models T^{\dhl}_{\lift}$ of $(K, \bm k, \dot{\ca O})$.
\end{proof}

Combining this with Theorem~\ref{thm:modcompTlift} yields:
\begin{cor}
If $T_{\res}$ is the model companion of $T_{\res,0}$, then the $\ca L_{\lift}^{\dot{\ca O}}$-theory $T^{\dhl}_{\lift}$ is the model companion of $T^{\dhl}_{\lift,0}$.
\end{cor}

Let $T_{\pair,0}$ be the theory of structures $(K, L, \ca O, \dot{\ca O})$ in the language $\ca L_{\OR,\der} \cup \{ \bm k, \ca O, \dot{\ca O} \}$ such that
\begin{enumerate}
    \item $(K, \dot{\ca O})$ is a pre-$H$-field with gap~$0$;
    \item $L=\bm k$ is a lift of $\res(K,\dot{\ca O})$;
    \item $L$ is an $H$-field with small derivation and $\ca O = \conv_L(C_L)$.
\end{enumerate}
\begin{cor}
The theory of transserial tame pairs is the model companion of $T_{\pair,0}$.
\end{cor}
The introduction of \cite{adamtt} states that $T^{\nl}_{\sm}$ is the model companion of the theory of $H$-fields with small derivation, which is used above.

Let $T^{\nl,\dhl}_0$ be the theory of structures $(K, \ca O, \dot{\ca O})$ in the language $\{ +, -, \cdot, 0, 1, \der, \les, \ca O, \dot{\ca O} \}$ such that
\begin{enumerate}
    \item $(K, \ca O)$ is an $H$-field with small derivation;
    \item $\dot{\ca O} \supsetneq \ca O$ is a convex valuation ring of $K$ with $\dot{\ca O} \neq K$;
    \item for all $a \in K \setminus \dot{\ca O}$, we have $a^{\dagger} \in K \setminus \dot{\ca O}$;
    \item for all $a \in \dot{\ca O}^{\x} \setminus \ca O^{\x}$, we have $a^{\dagger} \in \dot{\ca O}^{\x}$.
\end{enumerate}

\begin{lem}\label{lem:modextTnldhl}
Every model of $T^{\nl,\dhl}_0$ can be extended to a model of $T^{\nl,\dhl}$.
\end{lem}
\begin{proof}
Let $(K, \ca O, \dot{\ca O}) \models T^{\nl,\dhl}_0$.
First, $\res(K, \dot{\ca O})$ is an $H$-field with small derivation, so we can extend it to $\bm k_L \models T^{\nl}_{\sm}$.
Next, $(K, \dot{\ca O})$ is a pre-$H$-field with gap~$0$, so by \cite[Corollary~3.4 and Theorem~6.16]{pc-preH-gap}, we can extend $(K, \dot{\ca O})$ to a $\d$-Hensel-Liouville closed pre-$H$-field $(L, \dot{\ca O}_L)$ so that $\res(L, \dot{\ca O}_L) \cong \bm k_L$ as ordered differential fields.
It remains to apply Corollary~\ref{cor:tamepairequiv}.
\end{proof}

Combining this with Theorem~\ref{thm:modcompTnldhl} yields:
\begin{cor}\label{cor:modcompanTnldhl}
The theory $T^{\nl,\dhl}$ is the model companion of $T^{\nl,\dhl}_0$.
\end{cor}

\section{Ax--Kochen/Ershov, relative quantifier elimination, and related theorems}\label{sec:AKErelQE}

\subsection{Introduction}
In this section, we refine the proofs in the previous section to obtain more precise relative results, ultimately achieving relative quantifier elimination with a standard part map.
These techniques also yield the completeness of transserial tame pairs and the stable embeddedness mentioned in the introduction.

As before, let $\ca L_{\OR, \der} \coloneqq \{ +, -, \cdot, 0, 1, \les, \der \}$ be the language of ordered differential fields and $\ca L_{\res}$ be an expansion of $\ca L_{\OR, \der}$ by predicates or function symbols on $\bm k$, which are always interpreted as trivial outside of $\bm k$.
For model completeness in the previous section, we expanded this language by the unary predicate $\dot{\ca O}$ for a coarsened valuation ring.
For the results of this section we need the binary predicate $\dotrel{\prece}$, so let $\ca L_{\lift}^{\dotrel{\prece}} \coloneqq \ca L_{\res} \cup \{ \bm k, \dotrel{\prece} \}$, where $\bm k$ is a unary predicate and $\dotrel{\prece}$ is the binary asymptotic relation coming from $\dot{\ca O}$.
Fix an $\ca L_{\res}$-theory $T_{\res}$ extending the theory of ordered differential fields that are real closed, linearly surjective, and closed under exponential integration, and let $T^{\dhl}_{\lift}$ be the $\ca L_{\lift}^{\dotrel{\prece}}$-theory $T^{\dhl}\cup T_{\res}$ together with axioms expressing that $\bm k$ is a lift of the differential residue field of $(K, \dot{\ca O})$ as before.

We establish an Ax--Kochen/Ershov theorem and related results for models of $T^{\dhl}_{\lift}$ along the lines of \cite[Section~7.A]{pc-preH-gap}, which contains similar results in a two-sorted setting.
The proofs are similar but take into account the extra subtleties of having the differential residue field as a lift instead of as a separate sort, and consequently some identical parts are left out.
Some aspects are also adapted from \cite{bhardwajvddries-AKElifts}.

\subsection{Equivalence theorem}
Let $\bm K = (K, \bm k)$ and $\bm K^* = (K^*, \bm k^*)$ be models of $T^{\dhl}_{\lift}$.
We aim to construct a back-and-forth system from $\bm K$ to $\bm K^*$ when $\bm K$ and $\bm K^*$ are sufficiently saturated.
To that end, a \deft{good substructure} of $\bm K$ is an $\ca L_{\lift}^{\dotrel{\prece}}$-substructure $\bm E = (E, \bm k_{E})$ of $\bm K$ such that $\bm E$ is a field and $\bm k_E = \bm k \cap E$ is a lift of $\res(E, \dot{\ca O}_E)$. 
In particular, $(E, \dot{\ca O}_E)$ is a pre-$H$-subfield of $(K, \dot{\ca O})$ and $\bm k_E$ is an $\ca L_{\res}$-substructure of $\bm k$.
Let $\bm E$ and $\bm E^*$ be good substructures of $\bm K$ and $\bm K^*$, respectively.
A \deft{good map} from $\bm E$ to $\bm E^*$ is an $\ca L_{\lift}^{\dotrel{\prece}}$-isomorphism $f \colon \bm E \to \bm E^*$ such that $f|_{\bm k_E}$ is elementary as a partial $\ca L_{\res}$-map from $\bm k$ to~$\bm k^*$.

To be careful, this last condition needs a little explaining, namely how we relativize formulas to $\bm k$.
That is, we define $\r$-relative formulas by recursion so that:
\begin{itemize}
    \item every quantifier-free $\ca L_{\res}$-formula is an $\r$-relative formula;
    \item if $\varphi$ and $\theta$ are $\r$-relative formulas, then so are $\neg\varphi$, $\varphi\wedge\theta$, and $\varphi\vee\theta$;
    \item if $\varphi$ is $\r$-relative and $u$ is a variable, then $\exists u (u \in \bm k \wedge \varphi)$ and $\forall u (u \in \bm k \to \varphi)$ are $\r$-relative formulas.
\end{itemize}
In particular, an $\r$-relative formula does not involve the valuation $\dotrel{\prece}$.
By the above definition, the following is clear, and allows us to see the $\ca L_{\res}$-structure $\bm k$ inside $\bm K$, which we now do sometimes without comment.
\begin{lem}\label{lem:rrel}
Let $\varphi(x)$ be an $\ca L_{\res}$-formula, where $x=(x_1, \dots, x_n)$ is a tuple of pairwise distinct variables.
Then there is an $\r$-relative formula $\theta(x)$ such that for all $d \in \bm k^n$:
\[
\bm k \models \varphi(d)\quad \iff\quad \bm K \models \theta(d).
\]
\end{lem}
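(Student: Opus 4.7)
The plan is to induct on the construction of $\varphi$. The definition of $\r$-relative formula is tailored precisely so that the semantics in $\bm K$ on inputs from $\bm k$ tracks the semantics of $\ca L_{\res}$-formulas in $\bm k$, so the induction essentially writes itself once the atomic case is settled.

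For the base case I take $\theta \coloneqq \varphi$ when $\varphi$ is quantifier-free; this is $\r$-relative by the first clause of the recursive definition. The equivalence $\bm k \models \varphi(d) \iff \bm K \models \varphi(d)$ for $d \in \bm k^n$ holds because $\bm k$ is an $\ca L_{\res}$-substructure of $\bm K$ in the relevant sense: the $\ca L_{\OR, \der}$-operations on $\bm k$ are inherited from $\bm K$, and the additional $\ca L_{\res}$-symbols agree on tuples from $\bm k$ by the convention that they are interpreted trivially off $\bm k$. Consequently, any $\ca L_{\res}$-term evaluated at $d$ has the same value in $\bm k$ and in $\bm K$, and every atomic $\ca L_{\res}$-formula has the same truth value.

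For the inductive step, Boolean connectives are handled by the second clause of the definition, taking the corresponding combinations of the formulas supplied by the inductive hypothesis. For $\varphi(x) = \exists y\, \psi(x, y)$, if $\theta'(x, y)$ is $\r$-relative and matches $\psi$ on $\bm k$-tuples, I set $\theta(x) \coloneqq \exists y\,(y \in \bm k \wedge \theta'(x, y))$, which is $\r$-relative by the third clause; a witness $e \in \bm k$ for the existential on one side is a witness for the other. The universal case is analogous with $\forall y\,(y \in \bm k \to \theta'(x, y))$.

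There is no substantive obstacle here: the work is bookkeeping, the only points needing attention being closure of $\bm k$ under the $\ca L_{\res}$-function symbols and agreement of their interpretations on $\bm k$-tuples, both guaranteed by the trivial-off-$\bm k$ convention.
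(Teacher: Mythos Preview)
Your proof is correct and is exactly the routine induction on formulas that the paper has in mind; the paper itself gives no proof, simply remarking that the lemma is clear from the recursive definition of $\r$-relative formulas. You have spelled out the details the paper omits, including the one point requiring care (agreement of atomic formulas on $\bm k$-tuples via the trivial-off-$\bm k$ convention).
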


For $d \in \bm k^n$, we let $\tp_{\r}^{\bm K}(d\mid\bm k_E)$ denote the $\r$-relative type of $d$ over $\bm k_E$ in $\bm K$.
By Lemma~\ref{lem:rrel}, this is equivalent to considering the $\ca L_{\res}$-type of $d$ in~$\bm k$.

\begin{lem}\label{lem:goodresext}
Let $\bm E$ be a common good substructure of $\bm K$ and $\bm K^*$.
Suppose that $\bm k_{F}$ and $\bm k_{F^*}$ are $\ca L_{\res}$-substructures of $\bm k$ and $\bm k^*$, respectively, and let $f \colon \bm k_{F} \to \bm k_{F^*}$ be an $\ca L_{\res}$-isomorphism over $\bm k_E$.
Then there exist good substructures $\bm F = (F,\bm k_{F}) \subseteq \bm K$ and $\bm F^* = (F^*,\bm k_{F^*}) \subseteq \bm K^*$ and an $\ca L_{\lift}^{\dotrel{\prece}}$-isomorphism $g \colon \bm F \to \bm F^*$ such that:
\begin{enumerate}
    \item $\bm F$ and $\bm F^*$ extend $\bm E$;
    \item $(F,\dot{\ca O}_F)$ and $(F^*,\dot{\ca O}^*_F)$ have the same value group as $(E,\dot{\ca O}_E)$;
    \item $g$ extends $f$ and the identity on $\bm E$.
\end{enumerate}
\end{lem}
\begin{proof}
Given $d \in \bm k_F \setminus \bm k_E$ and $d^* \coloneqq f(d) \in \bm k_{F^*} \setminus \bm k_{E}$, use Lemma~\ref{lem:liftembed} to get a pre-$H$-field isomorphism $g \colon (E\langle d\rangle, \dot{\ca O}_{E\langle d\rangle}) \to (E\langle d^*\rangle, \dot{\ca O}^*_{E\langle d^*\rangle})$ over $E$ with $g(d)=d^*$.
Then $(E\langle d\rangle, \dot{\ca O}_{E\langle d\rangle})$ has the same value group as $(E,\dot{\ca O}_E)$ and $\bm k_E\langle d\rangle$ is a lift of $\res(E\langle d\rangle, \dot{\ca O}_{E\langle d\rangle})$, where $\bm k_E\langle d\rangle$ is the differential subfield of $\bm k_F$ generated by $\bm k_E$ and $d$, and $g|_{\bm k_E\langle d\rangle}=f|_{\bm k_E\langle d\rangle}$.
Although $\bm k_E\langle d\rangle$ might not be an $\ca L_{\res}$-structure of $\bm k$, so $(E\langle d\rangle, \bm k_E\langle d\rangle)$ might not be an $\ca L_{\lift}^{\dotrel{\prece}}$-substructure of $\bm K$,
Zorn yields the requisite extensions $\bm F$ and $\bm F^*$, and $\ca L_{\lift}^{\dotrel{\prece}}$-isomorphism $g \colon \bm F \to \bm F^*$.
\end{proof}

\begin{cor}\label{cor:goodresext}
Let $\bm E$ be a common good substructure of $\bm K$ and $\bm K^*$.
Suppose that $d \in \bm k^n$ and $d^* \in \bm k^n$ satisfy 
$\tp_{\r}^{\bm K}(d\mid\bm k_E)=\tp_{\r}^{\bm K^*}(d^*\mid\bm k_E)$.
Then there exist good substructures $\bm F \subseteq \bm K$ and $\bm F^* \subseteq \bm K^*$ and a good map $f \colon \bm F \to \bm F^*$ such that:
\begin{enumerate}
    \item $d \in \bm k_F^n$ and $d^* \in \bm k_{F^*}^n$;
    \item $\bm F$ and $\bm F^*$ extend $\bm E$;
    \item $(F,\dot{\ca O}_F)$ and $(F^*,\dot{\ca O}^*_F)$ have the same value group as $(E,\dot{\ca O}_E)$;
    \item $f$ extends the identity on $\bm E$;
    \item $f(d)=d^*$.
\end{enumerate}
\end{cor}
\begin{proof}
Let $\bm k_F$ be the $\ca L_{\res}$-substructure of $\bm k$ generated by $\bm k_E$ and $d$ and $\bm k_{F^*}$ be the $\ca L_{\res}$-substructure of $\bm k^*$ generated by $\bm k_E$ and $d^*$.
The assumption $\tp_{\r}^{\bm K}(d\mid\bm k_E)=\tp_{\r}^{\bm K^*}(d^*\mid\bm k_E)$ yields an extension of the identity map on $\bm k_E$ to a map $f \colon \bm k_{F} \to \bm k_{F^*}$ that is elementary as a partial $\ca L_{\res}$-map from $\bm k$ to $\bm k^*$ with $f(d)=d^*$.
It remains to apply Lemma~\ref{lem:goodresext}.
\end{proof}

The next theorem underpins all the remaining results.
\begin{thm}[Equivalence Theorem]\label{thm:eqthm}
Every good map $\bm E \to \bm E^*$ between good substructures $\bm E$ and $\bm E^*$ is elementary as a partial map from $\bm K$ to~$\bm K^*$.
\end{thm}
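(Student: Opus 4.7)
The plan is a back-and-forth argument. Passing to $\kappa$-saturated elementary extensions if necessary, we may assume $\bm K$ and $\bm K^*$ are $\kappa$-saturated for $\kappa = (|\bm E|+|\bm E^*|+\aleph_0)^+$. Let $\Phi$ be the family of good maps between good substructures of cardinality $<\kappa$. The given good map $f \colon \bm E \to \bm E^*$ lies in $\Phi$, so it suffices to show that $\Phi$ is a back-and-forth system. Thus, given $f \colon \bm F \to \bm F^*$ in $\Phi$ and $a \in K$, we must produce some $g \in \Phi$ extending $f$ with $a \in \dom(g)$ (and symmetrically starting from an element of $K^*$).

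In the case $a \in \bm k$, Lemma~\ref{lem:rrel} converts the $\ca L_{\res}$-type $\tp^{\bm k}(a\mid\bm k_F)$ into an $\r$-relative $\ca L_{\lift}^{\dotrel{\prece}}$-type over $\bm F^*$, and the elementarity of $f|_{\bm k_F}$ combined with the saturation of $\bm K^*$ produces some $a^* \in \bm k^*$ realizing its image. Lemma~\ref{lem:liftembed} then extends $f$ to an $\ca L_{\lift}^{\dotrel{\prece}}$-isomorphism $\bm F\langle a\rangle \to \bm F^*\langle a^*\rangle$ sending $a$ to $a^*$, and parts~(i)--(ii) of that lemma ensure the extension is again good. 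For general $a \in K$, choose a small $\d$-Hensel-Liouville closed pre-$H$-subfield $F^\flat \subseteq K$ containing $F\langle a\rangle$, extend $\bm k_F$ to a lift $\bm k_{F^\flat} \subseteq \bm k$ of $\res(F^\flat,\dot{\ca O})$ via Proposition~\ref{adh:7.1.3}, and iterate the residue-field move above to extend $f$ to a good map $h \colon \bm G \to \bm G^*$ with $F \cup \bm k_{F^\flat} \subseteq G \subseteq F^\flat$, matching $\bm k_{F^\flat}$ into $\bm k^*$ by an elementary partial $\ca L_{\res}$-map. Since $\bm k_{F^\flat} \subseteq G$ is a lift of $\res(F^\flat,\dot{\ca O})$, the residue fields of $G$ and $F^\flat$ coincide, so Lemma~\ref{lem:preHgap7.1} (applicable because $\bm K^*$ is $|F^\flat|^+$-saturated) extends $h$ to an $\ca L_{\lift}^{\dotrel{\prece}}$-embedding $g \colon \bm F^\flat \to \bm K^*$; the restriction $g|_{\bm k_{F^\flat}}$ coincides with the elementary map built in the residue-field step, so $g$ is good and contains $a$ in its domain.

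The main technical obstacle is maintaining the elementarity of the lift restriction when extending the pre-$H$-field part, since Lemma~\ref{lem:preHgap7.1} only produces a field embedding: a direct application could enlarge the residue field beyond the image of the current lift and so destroy goodness. The resolution sketched above is to prearrange matching lifts on both sides by combining Proposition~\ref{adh:7.1.3} with iterated residue-field moves before invoking Lemma~\ref{lem:preHgap7.1}, so that the lift restriction of the final map is already determined by the earlier elementary map on residues. Limit stages of the transfinite iteration used to match lifts are handled by taking unions, which preserve goodness, and the cardinality bookkeeping keeps everything in~$\Phi$.
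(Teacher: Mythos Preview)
Your overall strategy matches the paper's: a back-and-forth system of good maps between small good substructures, with lift elements handled via type realization and Lemma~\ref{lem:liftembed}, and the remaining extension via Lemma~\ref{lem:preHgap7.1}. However, the general step has a genuine gap. You need a lift $\bm k_{F^\flat}$ of $\res(F^\flat,\dot{\ca O})$ that lies simultaneously in $\bm k$ (so that its image can be built by realizing $\ca L_{\res}$-types, giving an elementary restriction) and in $F^\flat$ (so that $G\subseteq F^\flat$ and Lemma~\ref{lem:preHgap7.1} applies with $F^\flat$ playing the role of the ambient $\d$-Hensel-Liouville closed field). Proposition~\ref{adh:7.1.3}, applied to $(F^\flat,\dot{\ca O}_{F^\flat})$, only produces a lift inside $\dot{\ca O}_{F^\flat}$, with no reason for it to land in $\bm k$; conversely, the preimage of $\res(F^\flat)$ under the isomorphism $\bm k\to\res(K,\dot{\ca O})$ lies in $\bm k$ but need not lie in $F^\flat$. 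With only one containment the chain $F\cup\bm k_{F^\flat}\subseteq G\subseteq F^\flat$ fails and your application of Lemma~\ref{lem:preHgap7.1} does not go through.

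The paper sidesteps this by a different ordering of the work: it first iterates the lift step so that $\bm k_E$ becomes real closed, linearly surjective, and closed under exponential integration, and then (appealing to the argument of \cite[Theorem~7.2]{preH-gap}) arranges for $a\notin\bm k$ that $\bm k_E$ is still a lift of $\res(E\langle a\rangle,\dot{\ca O})$. The $\d$-Hensel-Liouville closure of $(E\langle a\rangle,\dot{\ca O}_{E\langle a\rangle})$ is then an \emph{immediate} extension, so $\bm k_E$ itself serves as the needed lift, automatically contained in both $\bm k$ and the closure. Your route can be repaired by taking $F^\flat$ to be a small elementary $\ca L_{\lift}^{\dotrel\prece}$-substructure of $\bm K$ and setting $\bm k_{F^\flat}\coloneqq\bm k\cap F^\flat$; but then $\kappa$ must also dominate $|\ca L_{\res}|$ (your choice $(|\bm E|+|\bm E^*|+\aleph_0)^+$ does not), and in the $a\in\bm k$ case you must note that $\bm k_F\langle a\rangle$ need not be an $\ca L_{\res}$-substructure of $\bm k$ when $\ca L_{\res}$ has extra function symbols, so a further closure is required before the extension is again good---exactly the point the paper flags before invoking Zorn.
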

\begin{proof}
Let $f$ be a good map from $\bm E$ to $\bm E^*$.
Let $\kappa$ be a cardinal of uncountable cofinality such that $\max\{|E|, |\ca L_{\res}|\}<\kappa$ and $2^\lambda < \kappa$ for every cardinal $\lambda<\kappa$. 
By passing to elementary extensions, we may suppose that $\bm K$ and $\bm K^*$ are $\kappa$-saturated.
We say a good substructure $(E_1, \bm k_1)$ of $\bm K$ is \deft{small} if $|E_1|<\kappa$.
Note that if a good substructure  $(E_1,\bm k_1)$ of $\bm K$ with value group $\Gamma_1$ satisfies $\max\{|\bm k_1|,|\Gamma_1|\}<\kappa$, then $(E_1,\bm k_1)$ is small since $|E_1|\les|\bm k_1|^{|\Gamma_1|}<\kappa$ (see for example \cite[Lemma~2.2.1]{adamtt} for the first inequality).
In particular, if $(E_1, \bm k_1)$ is small and $(E_2, \dot{\ca O}_{E_2})$ is a pre-$H$-subfield of $(K,\dot{\ca O})$ that is an immediate extension of $(E_1, \dot{\ca O}_{E_1})$, then $(E_2, \bm k_1)$ remains a small good substructure of $\bm K$.
To establish the theorem, we adapt the proof of \cite[Theorem~7.2]{pc-preH-gap}, a two-sorted analogue, to show that the set of good maps between small good substructures is a back-and-forth system from $\bm K$ to $\bm K^*$.
For this, given $a \in K \setminus E$, we need to extend $f$ to a good map with small domain containing~$a$.

We first handle the case that $a \in \bm k$.
By the saturation assumption, we have $a^* \in \bm k^*$ such that $\tp_{\r}^{\bm K}(a\mid\bm k_E)=\tp_{\r}^{\bm K^*}(a^*\mid f(\bm k_E))$.
Then Corollary~\ref{cor:goodresext} yields an extension of $f$ to a good map $\bm F \to \bm F^*$ with $a \in \bm k_F$ and $a^* \in \bm k_{F^*}$ and $a \mapsto a^*$.
Note that taking $\bm k_F$ and $\bm k_{F^*}$ as in the proof of Corollary~\ref{cor:goodresext}, we have $|\bm k_F|=|\bm k_{F^*}|<\kappa$, since $\max\{|\bm k_E|, |\ca L_{\res}|\}<\kappa$.
Thus we ensure that $\bm F$ and $\bm F^*$ are small.

With the case $a \in \bm k$ taken care of, the argument is now the same as in the proof of \cite[Theorem~7.2]{pc-preH-gap}, which we sketch.
Suppose that $a \notin \bm k$.
By iterating the above argument, we arrange that $\bm k_E$ is real closed, linearly surjective, and closed under exponential integration, and that $(E, \bm k_E) \subseteq (E\langle a \rangle, \bm k_E)$ are small good substructures of $\bm K$; a key point is that $\bm k_E$ is now a lift of both differential residue fields.
It remains to take the $\d$-Hensel-Liouville closure of $(E\langle a\rangle, \dot{\ca O}_{E\langle a\rangle})$ by \cite[Theorem~6.16]{pc-preH-gap} and apply Lemma~\ref{lem:preHgap7.1}.
\end{proof}

We now obtain the following two consequences: relative completeness and relative model completeness.
The former is often called an Ax--Kochen/Ershov theorem, and it also yields the completeness of transserial tame pairs.
\begin{cor}\label{cor:AKE}
We have $\bm K \equiv \bm K^*$ if and only if $\bm k \equiv \bm k^*$.
\end{cor}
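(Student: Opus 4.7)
The plan is to deduce both directions from the Equivalence Theorem (Theorem \ref{thm:eqthm}) together with Lemma \ref{lem:rrel}.

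For the forward direction, I would apply Lemma \ref{lem:rrel} with $n = 0$: any $\ca L_{\res}$-sentence $\varphi$ translates to an $\r$-relative $\ca L_{\lift}^{\dotrel\prece}$-sentence $\theta$ satisfying $\bm k \models \varphi \Leftrightarrow \bm K \models \theta$, and analogously for $\bm k^*$ and $\bm K^*$. The hypothesis $\bm K \equiv \bm K^*$ then immediately forces $\bm k \equiv \bm k^*$.

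For the backward direction, the plan is to exhibit a good map between suitable good substructures and invoke Theorem \ref{thm:eqthm} applied to sentences (the empty tuple). The candidate is $\bm E \coloneqq (\bm k, \bm k)$, which I would verify is a good substructure of $\bm K$: the set $\bm k$ is a subfield of $K$ closed under all operations of $\ca L_{\res}$, hence an $\ca L_{\lift}^{\dotrel\prece}$-substructure of $\bm K$; and because $\bm k$ is a lift we have $\bm k \cap \dot{\cao} = \{0\}$, so the induced valuation on $\bm k$ is trivial and $\bm k \cap \bm k = \bm k$ is tautologically a lift of its own residue field.

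To produce a good map, I would replace $\bm K^*$ by an elementary extension $\bm K^{**}$ whose $\ca L_{\res}$-reduct $\bm k^{**}$ is $|\bm k|^+$-saturated. This is possible because $\bm k$ is $\emptyset$-definable in $\bm K^{**}$ and $\ca L_{\res}$-types translate via Lemma \ref{lem:rrel} into $\ca L_{\lift}^{\dotrel\prece}$-types, so saturation descends to the $\bm k$-part (as at the start of the proof of Theorem \ref{thm:eqthm}). From $\bm k \equiv \bm k^{**}$ and the saturation of $\bm k^{**}$ there is an $\ca L_{\res}$-elementary embedding $f \colon \bm k \to \bm k^{**}$. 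Setting $\bm E^{**} \coloneqq (f(\bm k), f(\bm k))$, the same verification shows this is a good substructure of $\bm K^{**}$, and $f$ is an $\ca L_{\lift}^{\dotrel\prece}$-isomorphism $\bm E \to \bm E^{**}$ (the predicate $\bm k$ covers each domain and $\dotrel{\prece}$ on $\bm k$ is determined by $= 0$) that is elementary on its $\bm k$-part, hence a good map. Theorem \ref{thm:eqthm} then yields $\bm K \equiv \bm K^{**}$, and combining with $\bm K^* \prece \bm K^{**}$ gives $\bm K \equiv \bm K^*$.

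The main obstacle will be the saturation-and-embedding step: confirming that saturation descends from $\bm K^{**}$ to $\bm k^{**}$ via the $\r$-relative translation, and verifying the good-substructure and good-map conditions for $\bm E$ and $f$.
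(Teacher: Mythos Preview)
Your proof is correct. The forward direction is identical to the paper's. For the backward direction you take a genuinely different route: the paper observes that (after reducing to the case where $\ca L_{\res}$ extends $\ca L_{\OR,\der}$ only by relation symbols) the structure $(\Q,\Q)$ with trivial derivation and trivial valuation is a common good substructure of both $\bm K$ and $\bm K^*$, and the identity on $\Q$ is a good map because $\bm k \equiv \bm k^*$ makes every $\ca L_{\res}$-formula with parameters from $\Q$ hold in $\bm k$ iff it holds in $\bm k^*$. This avoids any saturation argument entirely. Your approach instead takes the larger good substructure $(\bm k,\bm k)$ on the $\bm K$ side and compensates by passing to a saturated elementary extension of $\bm K^*$ to obtain the needed elementary embedding of $\bm k$. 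The paper's argument is shorter and needs no saturation, at the cost of the mild Morleyization step; your argument avoids that reduction and would adapt more readily to situations where no canonical prime substructure is available.
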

\begin{proof}
The left-to-right direction follows from Lemma~\ref{lem:rrel}.
For the converse, suppose that $\bm k \equiv \bm k^*$.
We may assume that $\ca L_{\res}$ is an expansion of $\ca L_{\OR,\der}$ by relation symbols, so then we can identify $\Q$ with an $\ca L_{\res}$-substructure of $\bm k$ and an $\ca L_{\res}$-substructure of $\bm k^*$, respectively, and by assumption, these are $\ca L_{\res}$-isomorphic.
Consider the $\ca L_{\lift}^{\dotrel{\prece}}$-structure $(\Q, \Q)$, where $\Q$ is equipped with its usual ordered ring structure and the trivial derivation and valuation, the predicate $\bm k$ is interpreted as $\Q$ and construed as an $\ca L_{\res}$-structure as explained above.
This structure embeds into both $\bm K$ and $\bm K^*$, inducing an obvious good map between good substructures of $\bm K$ and $\bm K^*$, respectively, which is elementary as a partial map $\bm K \to \bm K^*$.
Hence $\bm K \equiv \bm K^*$.
\end{proof}

\begin{cor}\label{cor:TTPcomp}
The theory of transserial tame pairs is complete.
The theory $T^{\nl,\dhl}$ is complete.
\end{cor}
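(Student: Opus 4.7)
The plan is to derive both completeness statements as straightforward consequences of the Ax--Kochen/Ershov-style result in Corollary~\ref{cor:AKE}; the main work has already been done in establishing the Equivalence Theorem.

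For the completeness of the theory of transserial tame pairs, I would construe such pairs as $\ca L_{\lift}^{\dotrel{\prece}}$-structures with $\ca L_{\res} = \ca L_{\OR, \der} \cup \{ \ca O \}$ and $T_{\res} = T^{\nl}_{\sm}$, noting that $T^{\nl}_{\sm}$ satisfies the standing assumptions on $T_{\res}$ by \cite[Corollary~14.2.2]{adamtt}, exactly as in the proof of Corollary~\ref{cor:modcompTTP}. Given two transserial tame pairs $(K, L)$ and $(K^*, L^*)$, Lemma~\ref{lem:elemext} ensures that $L$ and $L^*$ (with their natural valuation rings) are models of $T^{\nl}_{\sm}$ as $\ca L_{\res}$-structures. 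Since $T^{\nl}_{\sm}$ is the complete theory of $\T$, we have $L \equiv L^*$, and Corollary~\ref{cor:AKE} immediately yields $(K, L) \equiv (K^*, L^*)$.

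For the completeness of $T^{\nl,\dhl}$, I would reduce to the previous case via Corollary~\ref{cor:tamepairequiv}. Given two models $(K_1, \ca O_1, \dot{\ca O}_1), (K_2, \ca O_2, \dot{\ca O}_2) \models T^{\nl,\dhl}$, use Proposition~\ref{adh:7.1.3} to equip each $(K_i, \dot{\ca O}_i)$ with a lift $L_i \subseteq \dot{\ca O}_i$ of its differential residue field. By Corollary~\ref{cor:tamepairequiv}, each $(K_i, L_i)$ is a transserial tame pair, so by the first part they are elementarily equivalent in $\ca L_{\lift}^{\dotrel{\prece}}$. Since $\ca O$ and $\dot{\ca O}$ are interpretable in this richer language (the natural valuation $\ca O$ is part of $\ca L_{\res}$ interpreted on $L_i$, and $\dot{\ca O}$ is interdefinable with $\dotrel{\prece}$), restricting to the $\ca L_{\OR,\der} \cup \{ \ca O, \dot{\ca O} \}$-reduct gives the desired elementary equivalence.

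There is no real obstacle beyond confirming that the reduct in the second paragraph correctly recovers the ambient valuation ring $\ca O = \conv_{K_i}(C_{K_i})$ rather than $\conv_{L_i}(C_{L_i})$; this is precisely the observation already used in Proof~1 of Theorem~\ref{thm:modcompTnldhl}, namely that $C_{K_i} = C_{L_i}$ by Lemma~\ref{lem:elemext}, so the two interpretations of $\ca O$ induce the same valuation ring on $K_i$.
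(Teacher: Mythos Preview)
Your proposal is correct and follows essentially the same approach as the paper: invoke the completeness of $T^{\nl}_{\sm}$ together with Corollary~\ref{cor:AKE} for the first statement, then reduce the second to the first via lifts and Corollary~\ref{cor:tamepairequiv} exactly as in Proof~1 of Theorem~\ref{thm:modcompTnldhl}. One small phrasing issue: in your last paragraph, the two interpretations of $\ca O$ do not literally give the same subset of $K_i$ (the $\ca L_{\res}$-symbol is trivial off $L_i$), but what matters---and what Proof~1 uses---is that $\ca O_{K_i}=\conv_{K_i}(C_{K_i})$ is already definable in the ordered differential field $K_i$, so $(K_i,\ca O_i,\dot{\ca O}_i)$ is a definitional reduct of the $\ca L_{\lift}^{\dotrel{\prece}}$-structure.
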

\begin{proof}
Note that $T^{\nl}_{\sm}$ is complete \cite[Corollary~16.6.3]{adamtt}.
The second statement follows from the first as in the first proof of Theorem~\ref{thm:modcompTnldhl}.
\end{proof}

\begin{cor}\label{cor:relmodcomp}
Let $\bm E = (E, \bm k_{E}) \subseteq \bm K$ with $\bm E \models T^{\dhl}_{\lift}$.
If $\bm k_E \prece \bm k$, then $\bm E \prece \bm K$.
\end{cor}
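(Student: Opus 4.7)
The plan is to deduce this directly from the Equivalence Theorem (Theorem~\ref{thm:eqthm}), applied to the identity inclusion $\bm E \hookrightarrow \bm K$ with $\bm K^* := \bm K$ and $\bm E^* := \bm E$. Once the hypotheses of that theorem are verified, the conclusion $\bm E \prece \bm K$ is immediate, since ``elementary as a partial map from $\bm K$ to $\bm K$'' applied to $\id_{\bm E}$ is exactly $\bm E \prece \bm K$.

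The first step is to check that $\bm E$ qualifies as a good substructure of $\bm K$. Since $\bm E \models T^{\dhl}_{\lift}$, the underlying structure $E$ is a field, and because $\bm E$ is an $\ca L_{\lift}^{\dotrel{\prece}}$-substructure of $\bm K$, the interpretation of the unary predicate $\bm k$ in $\bm E$ coincides with $\bm k \cap E$, which is exactly $\bm k_E$. The lift axioms of $T^{\dhl}_{\lift}$ then force $\bm k_E$ to be a lift of $\res(E, \dot{\ca O}_E)$, so $\bm E$ is a good substructure of $\bm K$ in the precise sense defined before Theorem~\ref{thm:eqthm}.

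The second step is to verify that $f := \id_{\bm E}$ is a good map from the good substructure $\bm E$ of $\bm K$ to itself (viewed as a good substructure of $\bm K^* = \bm K$). It is trivially an $\ca L_{\lift}^{\dotrel{\prece}}$-isomorphism, and its restriction $f|_{\bm k_E} = \id_{\bm k_E}$ is elementary as a partial $\ca L_{\res}$-map from $\bm k$ to $\bm k$ precisely because $\bm k_E \prece \bm k$ by hypothesis. Applying Theorem~\ref{thm:eqthm} then yields that $\id_{\bm E}$ is elementary as a partial map $\bm K \to \bm K$, i.e., $\bm E \prece \bm K$.

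There is essentially no obstacle beyond bookkeeping; the entire content of the corollary is packaged into Theorem~\ref{thm:eqthm}, and the only small thing to watch is that the predicate $\bm k$ in the substructure $\bm E$ is interpreted as $\bm k \cap E$, so the hypothesis $\bm E \models T^{\dhl}_{\lift}$ produces a good substructure without any additional choice of lift. The same reasoning could also be phrased as a symmetric Tarski--Vaught-style application of Corollary~\ref{cor:AKE} to the pair $(\bm K, \bm E)$ in a language naming elements of $E$, but invoking the Equivalence Theorem on $\id_{\bm E}$ is the cleanest route.
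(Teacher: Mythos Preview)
Your proof is correct and takes essentially the same approach as the paper: the paper's one-line proof simply says to view the identity on $(E,\bm k_E)$ as a good map and apply Theorem~\ref{thm:eqthm}, which is exactly what you spell out in detail. Your added bookkeeping (why $\bm E$ is a good substructure and why $\id_{\bm k_E}$ is elementary) is accurate and fills in what the paper leaves implicit.
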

\begin{proof}
View the identity map on $\bm E$ as a map from $\bm E$ to~$\bm K$ and note that it is good.
\end{proof}
Note that Corollary~\ref{cor:relmodcomp} generalizes Theorem~\ref{thm:modcompTlift}.
Next we deduce that the differential residue field is purely stably embedded in the pair.
\begin{cor}\label{cor:kstabembed}
Any subset of $\bm k^n$ definable in the $\ca L_{\lift}^{\dotrel{\prece}}$-structure $\bm K$ \textnormal{(}with parameters from $K$\textnormal{)} is definable in the $\ca L_{\res}$-structure $\bm k$ \textnormal{(}with parameters from $\bm k$\textnormal{)}.
\end{cor}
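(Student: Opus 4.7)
The plan is to apply the Equivalence Theorem via a standard type-transfer argument. Let $X \subseteq \bm k^n$ be defined in $\bm K$ by some $\ca L_{\lift}^{\dotrel{\prece}}$-formula $\varphi(\bar x, \bar a)$ with $\bar a \in K^m$, and pass to a sufficiently saturated $\bm K^* = (K^*, \bm k^*) \succeq \bm K$. It suffices to prove the following claim: whenever $\bar d, \bar d' \in (\bm k^*)^n$ satisfy $\tp^{\bm k^*}_{\ca L_{\res}}(\bar d/\bm k) = \tp^{\bm k^*}_{\ca L_{\res}}(\bar d'/\bm k)$, we have $\bm K^* \models \varphi(\bar d, \bar a) \iff \bm K^* \models \varphi(\bar d', \bar a)$. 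A routine saturation-plus-compactness argument then produces an $\ca L_{\res}$-formula $\psi(\bar x)$ over $\bm k$ such that $\varphi(\bar x, \bar a)$ and $\psi(\bar x)$ agree on $(\bm k^*)^n$; by elementarity $\bm K \prece \bm K^*$, the same $\psi$ defines $X$ in $\bm k$.

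To prove the claim, form good substructures $\bm E = (K\langle \bar d\rangle, \bm k\langle \bar d\rangle)$ and $\bm E' = (K\langle \bar d'\rangle, \bm k\langle\bar d'\rangle)$ of $\bm K^*$, where the angle brackets denote generation as a differential subfield. Iterating parts~\ref{liftembed1} and \ref{liftembed2} of Lemma~\ref{lem:liftembed} along the coordinates of $\bar d$ (skipping any $d_i$ already lying in the lift generated so far) shows that $\bm k\langle\bar d\rangle$ is a lift of $\res(K\langle\bar d\rangle, \dot{\ca O}_{K\langle\bar d\rangle})$. Since the residue map restricts to an injection on $\bm k^* \cap K\langle\bar d\rangle$ with image inside that residue field, this forces $\bm k^* \cap K\langle\bar d\rangle = \bm k\langle\bar d\rangle$, so $\bm E$ is a good substructure, and likewise for $\bm E'$. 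The type equality then lets us define a differential ring isomorphism $\bm k\langle\bar d\rangle \to \bm k\langle\bar d'\rangle$ over $\bm k$ sending $\bar d \mapsto \bar d'$ that is partial-elementary as an $\ca L_{\res}$-map $\bm k^* \to \bm k^*$. Iterating Lemma~\ref{lem:liftembed}\ref{liftembed3}, this isomorphism together with the identity on $K$ extends to a pre-$H$-field isomorphism $\bm f \colon \bm E \to \bm E'$ fixing $K$. Thus $\bm f$ is a good map, so by Theorem~\ref{thm:eqthm} it is elementary as a partial $\ca L_{\lift}^{\dotrel{\prece}}$-map $\bm K^* \to \bm K^*$. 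Since $\bm f$ fixes $\bar a$ and sends $\bar d \mapsto \bar d'$, this yields $\bm K^* \models \varphi(\bar d, \bar a) \iff \bm K^* \models \varphi(\bar d', \bar a)$.

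The main obstacle is the residue-field bookkeeping: as coordinates of $\bar d$ are adjoined one by one, one must check that each intermediate substructure satisfies the hypotheses of Lemma~\ref{lem:liftembed}, in particular that the lift property survives so that the next step applies. This is exactly what parts~\ref{liftembed1} and \ref{liftembed2} guarantee, but ordering the adjunctions correctly---first producing the isomorphism at the residue-lift level via the $\ca L_{\res}$-type equality, and only then extending it over $K$ via part~\ref{liftembed3}---requires some care. Beyond this, the proof is a routine assembly of the Equivalence Theorem with the compactness reduction to type-transfer.
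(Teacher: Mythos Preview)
Your approach is the same as the paper's: show that tuples in the lift with the same $\ca L_{\res}$-type over $\bm k$ have the same $\ca L_{\lift}^{\dotrel{\prece}}$-type over $K$, by building a good map over $\bm K$ sending one to the other and invoking the Equivalence Theorem. The paper phrases the type-transfer using two elementary extensions and the Stone representation theorem, but that is cosmetically equivalent to your single saturated extension plus compactness.

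There is one genuine gap in your construction of the good substructure $\bm E = (K\langle \bar d\rangle, \bm k\langle \bar d\rangle)$. A good substructure is by definition an $\ca L_{\lift}^{\dotrel{\prece}}$-substructure, and $\ca L_{\res}$ may contain function symbols beyond those of $\ca L_{\OR,\der}$. If so, the differential subfield $\bm k\langle \bar d\rangle$ need not be closed under them, and hence $K\langle \bar d\rangle$ need not be an $\ca L_{\lift}^{\dotrel{\prece}}$-substructure of $\bm K^*$; your verification that $\bm k^* \cap K\langle\bar d\rangle = \bm k\langle\bar d\rangle$ (which is correct) does not help with this. This is exactly the issue flagged in the proof of Theorem~\ref{thm:eqthm}, where it is noted that $(E\langle d\rangle, \bm k_E\langle d\rangle)$ ``might not yet be an $\ca L_{\lift}^{\dotrel{\prece}}$-substructure of $\bm K$ since $\bm k_E\langle d\rangle$ might not be an $\ca L_{\res}$-structure of $\bm k$,'' and is resolved there by a Zorn iteration alternating differential-field adjunction (via Lemma~\ref{lem:liftembed}) with $\ca L_{\res}$-closure. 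The paper's proof of the corollary simply cites that construction rather than redoing it; you should do the same, or else insert the iteration explicitly.
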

\begin{proof}
Let $\bm L=(L, \bm k_L)$ and $\bm L^*=(L^*, \bm k_{L^*})$ be elementary $\ca L_{\lift}^{\dotrel{\prece}}$-extensions of $(K, \bm k)$ with $d \in \bm k_L^n$ and $d^* \in \bm k_{L^*}^n$ such that $\tp_{\r}^{\bm L}(d\mid\bm k)=\tp_{\r}^{\bm L^*}(d^*\mid\bm k)$.
Use Corollary~\ref{cor:goodresext} to extend the identity map on $\bm K$ to a good map from a good substructure of $\bm L$ containing $d$ to a good substructure of $\bm L^*$ containing $d^*$ such that $d \mapsto d^*$.
Then Theorem~\ref{thm:eqthm} yields $\tp^{\bm L}(d\mid K)=\tp^{\bm L^*}(d^*\mid K)$, where $\tp^{\bm L}(d\mid K)$ is the $\ca L_{\lift}^{\dotrel{\prece}}$-type of $d$ over $K$ in $\bm L$ and likewise for $\tp^{\bm L^*}(d^*\mid K)$.
The statement of the corollary follows by the Stone representation theorem.
\end{proof}

\begin{cor}\label{cor:Cstabembed}
Let $(K, L)$ be a transserial tame pair. Then:
\begin{enumerate}
    \item Any subset of $L^n$ definable in $(K, L)$ \textnormal{(}with parameters from $K$\textnormal{)} is definable in the differential field $L$ \textnormal{(}with parameters from~$L$\textnormal{)}.
    \item Any subset of $C^n$ definable in $(K, L)$ \textnormal{(}with parameters from $K$\textnormal{)} is definable in the field $C$ \textnormal{(}with parameters from~$C$\textnormal{)}.
\end{enumerate}
\end{cor}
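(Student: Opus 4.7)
\emph{Plan.} Part~(i) is a direct application of Corollary~\ref{cor:kstabembed} in the transserial tame pair setting. Following Corollary~\ref{cor:modcompTTP}, take $\ca L_{\res} = \ca L_{\OR,\der} \cup \{\ca O\}$ with $\ca O$ interpreted as the natural valuation ring of $L$ and $\bm k$ interpreted as $L$; then the transserial tame pair $(K, L)$ is a model of $T^{\dhl}_{\lift}$. Corollary~\ref{cor:kstabembed} then says that any subset of $L^n$ definable in the pair (equivalently, in the $\ca L_{\lift}^{\dotrel{\prece}}$-structure $(K, L)$, since $\dot{\ca O}$ and $\dotrel{\prece}$ are interdefinable) is definable in $L$ as an ordered valued differential field with parameters from $L$. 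But $L$ is real closed, so its ordering is existentially field-definable, and as recalled in Section~\ref{subsec:preH} the natural valuation ring $\ca O_L$ is existentially definable in the ordered differential field $L$ without parameters. Hence $\ca L_{\res}$-definability on $L$ reduces to definability in the pure differential field~$L$.

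For part~(ii), I first apply part~(i) to realize the given $X \subseteq C^n$ as definable in the differential field $L$ with parameters from $L$. By Lemma~\ref{lem:elemext}, $C = C_L$; and since $L \models T^{\nl}_{\sm}$ is Liouville closed, $C$ is a real closed field. It then suffices to show that every $L$-definable subset of $C^n$ is already $C$-definable, i.e., that the constant field is purely stably embedded in any model of $T^{\nl}_{\sm}$. This last statement concerns the theory of $\T$ itself, independently of the transserial tame pair setup.

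The principal obstacle is supplying this last ingredient. The expected route is to appeal to (or extract from) the model theory developed in \cite{adamtt}: the key claim is that any real-closed-field embedding between the constant fields of two models of $T^{\nl}_{\sm}$ extends to an elementary embedding of the ambient $H$-fields, after which the Stone-representation argument from the proof of Corollary~\ref{cor:kstabembed} transfers $L$-definability of $X \subseteq C^n$ to $C$-definability. Absent a direct citation, an alternative is to incorporate the constant field as an additional sort in the framework of Section~\ref{sec:AKErelQE} and rerun the equivalence-theorem machinery; the triviality of the derivation on $C$ should make real-closed-field-elementary maps between constant subfields lift uniquely to $L$-elementary maps above, with the lifting handled exactly as in the proof of Theorem~\ref{thm:eqthm}.
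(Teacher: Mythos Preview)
Your approach is essentially the same as the paper's: apply Corollary~\ref{cor:kstabembed} for part~(i), then reduce part~(ii) to the pure stable embeddedness of $C$ in the model $L$ of $T^{\nl}_{\sm}$. The ``principal obstacle'' you flag is not an obstacle at all: the needed fact is exactly \cite[Proposition~16.6.7]{adamtt}, which the paper cites directly, so there is no need for the alternative route you sketch. (A minor point: the paper takes $\ca L_{\res} = \ca L_{\OR,\der}$ rather than $\ca L_{\OR,\der} \cup \{\ca O\}$, which avoids your extra reduction step, but your version is harmless.)
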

\begin{proof}
The statement about $L$ is immediate from Corollary~\ref{cor:kstabembed} with $\ca L_{\res} = \ca L_{\OR,\der}$.
For $C$, recall that $C \subseteq L$ by Lemma~\ref{lem:elemext}, so a subset of $C^n$ definable in $(K, L)$ is definable in $L$.
The result follows from the pure stable embeddedness of $C$ in $L$, \cite[Proposition~16.6.7]{adamtt}.
\end{proof}

\subsection{Relative quantifier elimination}

In this subsection we eliminate quantifiers down to the lift of the differential residue field construed as an $\ca L_{\res}$-structure.
In the result, we make more explicit use of the $\r$-relative formulas defined before Lemma~\ref{lem:rrel}.
Additionally, we further expand the language by a binary version of the standard part map.
To explain this, fix $(K, \bm k) \models T^{\dhl}_{\lift}$.
For each $a \in \dot{\ca O}^{\x}$, the $u \in \bm k^{\x}$ with $a \dotrel{\sim} u$ is unique.
Hence we have a definable map $\pi \colon K^2 \to \bm k$ defined by, for $a, b \in K$,
\[
\pi(a,b)\ =\ 
\begin{cases}
u & \text{if}\ a \dotrel{\sim} ub,\ u \in \bm k^{\x}, \\
0 & \text{if}\ a \not\dotrel{\asymp} b\ \text{or}\ a=b=0.
\end{cases}
\]
If we were to include a function symbol for multiplicative inversion in the language, we would need only the unary version of $\pi$ (and only $\dot{\ca O}$ instead of $\dotrel{\prece}$).
Let $\ca L_{\lift}^{\dotrel{\prece},\pi} \coloneqq \ca L_{\lift}^{\dotrel{\prece}} \cup \{ \pi \}$, and continue to denote by $T^{\dhl}_{\lift}$ its natural expansion by definition to an $\ca L_{\lift}^{\dotrel{\prece},\pi}$-theory.
To state the relative quantifier elimination result precisely, we also need to define a special kind of formula.
Let $x=(x_1, \dots, x_n)$ be a tuple of pairwise distinct variables.
We call an $\ca L_{\lift}\cup\{\pi\}$-formula $\varphi(x)$ \deft{special} if $\varphi(x)$ is
\[
\varphi_{\r}\big(\pi(\sigma_1(x), \tau_1(x)), \dots, \pi(\sigma_k(x), \tau_k(x))\big)
\]
for some $k \in \N$, $\r$-relative formula $\varphi_{\r}(u_1, \dots, u_k)$, and $\ca L_{\res}^{\pi}$-terms $\sigma_1(x)$, $\tau_1(x)$, \dots, $\sigma_k(x)$, $\tau_k(x)$, where $\ca L_{\res}^{\pi} \coloneqq \ca L_{\res} \cup \{ \pi \}$.

\begin{thm}\label{thm:relQE}
Let $x$ be as above.
If $\varphi(x)$ is an $\ca L_{\lift}^{\dotrel{\prece},\pi}$-formula, then $\varphi(x)$ is $T^{\dhl}_{\lift}$-equivalent to
\begin{equation}\tag{$*$}\label{relQEform}
\big(\theta_1(x) \wedge \varphi_1(x)\big) \vee \dots \vee \big(\theta_N(x) \wedge \varphi_N(x)\big)
\end{equation}
for some $N \in \N$, quantifier-free $\ca L_{\lift}^{\dotrel{\prece}, \pi}$-formulas $\theta_1(x)$, \dots, $\theta_N(x)$, and special formulas $\varphi_1(x)$, \dots, $\varphi_N(x)$.
\end{thm}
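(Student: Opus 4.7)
The plan is to deduce Theorem~\ref{thm:relQE} from the Equivalence Theorem (Theorem~\ref{thm:eqthm}) via a standard type-equivalence argument. Let $\Phi(x)$ denote the set of formulas of the form \eqref{relQEform}. Since $\r$-relative formulas are closed under Boolean combinations by their inductive definition, so are the special formulas; combined with the analogous closure of quantifier-free $\ca L_{\res}^{\dotrel{\prece},\pi}$-formulas, and rewriting via distributive normal form, $\Phi$ is closed (modulo $T^{\dhl}_{\lift}$) under Boolean operations. By the Stone representation theorem and compactness it therefore suffices to prove the type-equivalence statement: if $\bm K, \bm K^* \models T^{\dhl}_{\lift}$ and $a \in K^n$, $a^* \in (K^*)^n$ realize the same $\Phi$-formulas, then they realize the same complete $\ca L_{\lift}^{\dotrel{\prece},\pi}$-type.

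To prove this, first observe that the predicate $\bm k$ is quantifier-free term-definable in $\ca L_{\res}^{\dotrel{\prece},\pi}$ via $y = \pi(y,1)$: from the cases in the definition of $\pi$, one checks that $\pi(y,1)=y$ iff $y \in \bm k$. Hence the quantifier-free $\ca L_{\res}^{\dotrel{\prece},\pi}$-type of $a$ already determines its quantifier-free $\ca L_{\lift}^{\dotrel{\prece},\pi}$-type. We construct $E \subseteq K$ starting from the differential subring generated by $a$ and iteratively closing under fields of fractions inside $K$, the function $\pi$, and the $\ca L_{\res}$-function symbols; construct $E^* \subseteq K^*$ analogously from $a^*$. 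Set $\bm k_E \coloneqq \bm k \cap E$: this is a differential subfield of $\dot{\ca O}_E$, and for each $b \in \dot{\ca O}_E^{\times}$ we have $\pi(b,1) \in \bm k_E$ with $b \dotrel{\sim} \pi(b,1)$, so $\bm k_E$ is a lift of $\res(E, \dot{\ca O}_E)$; similarly for $\bm k_{E^*}$. Hence $(E, \bm k_E)$ and $(E^*, \bm k_{E^*})$ are good substructures of $\bm K$ and $\bm K^*$, and matching quantifier-free $\ca L_{\res}^{\dotrel{\prece},\pi}$-types (which belong to $\Phi$ by taking the special conjunct to be a trivially true special formula) yields an $\ca L_{\lift}^{\dotrel{\prece},\pi}$-isomorphism $f \colon (E, \bm k_E) \to (E^*, \bm k_{E^*})$ sending $a$ to~$a^*$.

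To apply Theorem~\ref{thm:eqthm} we verify that $f$ is good, i.e., that $f|_{\bm k_E}$ is elementary as a partial $\ca L_{\res}$-map $\bm k \to \bm k^*$. Each $u \in \bm k_E$ can be written as $u = \pi(\sigma(a),1)$ for some $\ca L_{\res}^{\pi}$-term $\sigma$, since $u = \pi(u,1)$ and $u$ is built from $a$ by the operations generating $E$. Given an $\ca L_{\res}$-formula $\psi(u_1,\dots,u_k)$ and elements $u_i = \pi(\sigma_i(a),1) \in \bm k_E$, Lemma~\ref{lem:rrel} provides an $\r$-relative formula $\theta$ such that $\bm k \models \psi(u_1,\dots,u_k)$ iff $\bm K \models \theta(\pi(\sigma_1(a),1),\dots,\pi(\sigma_k(a),1))$. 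The right-hand side is the evaluation at $a$ of the special formula $\theta(\pi(\sigma_1(x),1),\dots,\pi(\sigma_k(x),1))$, so by hypothesis it agrees with the corresponding evaluation at $a^*$; unpacking the same equivalence in $\bm K^*$ gives $\bm k^* \models \psi(f(u_1),\dots,f(u_k))$. Hence $f$ is a good map and Theorem~\ref{thm:eqthm} shows it is elementary. The main obstacle in this plan is to arrange the correspondence so that the special-formula hypothesis translates cleanly into elementarity of $f|_{\bm k_E}$; the key point making the absence of $\bm k$ from the $\theta_i$ harmless is that both $\bm k$ itself and every element of $\bm k_E$ are term-definable using only $\pi$ and $\ca L_{\res}^{\pi}$-terms, while the iterative construction of $E$ requires care to guarantee that $\bm k_E$ is genuinely a lift of $\res(E, \dot{\ca O}_E)$.
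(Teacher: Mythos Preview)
Your proof is correct and follows essentially the same strategy as the paper's: reduce to a type-determination statement via Stone/compactness, build good substructures from the tuples, use the special-formula hypothesis to check that the restriction to the lifts is $\ca L_{\res}$-elementary, and invoke Theorem~\ref{thm:eqthm}. The one point to tidy up is your claim that each $u \in \bm k_E$ equals $\pi(\sigma(a),1)$ for an $\ca L_{\res}^{\pi}$-term $\sigma$: since you close $E$ under fraction fields during the construction, elements of $E$ need not literally be terms in $a$, so the justification ``$u=\pi(u,1)$ and $u$ is built from $a$'' does not quite go through; the paper sidesteps this by letting $E$ be the $\ca L_{\res}^{\pi}$-substructure generated by $a$ (so every element, and in particular every element of $\bm k_E=\pi(E,E)$, is a term) and only afterwards passing to the fraction field, noting that $\bm k\cap E$ is unchanged because $p/q\in\bm k^{\times}$ with $p,q\in E$ forces $p/q=\pi(p,q)\in E$.
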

\begin{proof}
Let $\Theta(x)$ be the set of $\ca L_{\lift}^{\dotrel{\prece},\pi}$-formulas displayed in \eqref{relQEform}.
Then $\Theta(x)$ is obviously closed under disjunction and also closed under negation, up to logical equivalence.
It suffices to show that every $x$-type consistent with $T^{\dhl}_{\lift}$ is determined by its intersection with $\Theta(x)$.
Below, $\theta(x)$ ranges over quantifier-free $\ca L_{\lift}^{\dotrel{\prece}, \pi}$-formulas and $\varphi(x)$ ranges over special formulas.
For a model $\bm K = (K, \bm k; \pi)$ of $T^{\dhl}_{\lift}$ and $a \in K^{n}$, we set
\[
\qftp^{\bm K}(a)\ \coloneqq\ \{ \theta(x) : \bm K \models \theta(a) \}
\]
and
\[
\tp_{\sp}^{\bm K}(a)\ \coloneqq\ \{ \varphi(x) : \bm K \models \varphi(a) \}.
\]
Let $\bm K = (K, \bm k; \pi)$ and $\bm K^* = (K^*, \bm k^*; \pi^*)$ be models of $T^{\dhl}_{\lift}$ and $a \in K^{n}$ and $a^* \in (K^*)^{n}$ satisfy $\qftp^{\bm K}(a) = \qftp^{\bm K^*}(a^*)$ and $\tp_{\sp}^{\bm K}(a) = \tp_{\sp}^{\bm K^*}(a^*)$.
We need to show that $\tp^{\bm K}(a) = \tp^{\bm K^*}(a^*)$.

Let $E$ be the $\ca L_{\res}^{\pi}$-structure generated by $a$ inside $\bm K$, so $\bm k_E \coloneqq \bm k \cap E = \pi(E,E)$ is an $\ca L_{\res}$-substructure of $\bm k$.
Defining $(E^*, \bm k_{E^*}^*)$ in $\bm K^*$ likewise, the assumption $\qftp^{\bm K}(a) = \qftp^{\bm K^*}(a^*)$ gives an $\ca L_{\lift}^{\dotrel{\prece},\pi}$-isomorphism $f \colon (E, \bm k_E) \to (E^*, \bm k^*_{E^*})$.
By taking fraction fields, we can arrange that $E$ and $E^*$ are fields, without changing $\bm k_E$ or $\bm k^*_{E^*}$. 
Hence $\bm E \coloneqq (E, \bm k_E)$ is a good substructure of $\bm K$ and $\bm E^* \coloneqq (E^*, \bm k_{E^*}^*)$ is a good substructure of $\bm K^*$.
Since $\tp_{\sp}^{\bm K}(a) = \tp_{\sp}^{\bm K^*}(a^*)$, $f|_{\bm k_E} \colon \bm k_E \to \bm k_{E^*}^*$ is elementary as a partial $\ca L_{\res}$-map $\bm k \to \bm k^*$.
Thus $f$ is a good map and so $\tp^{\bm K}(a) = \tp^{\bm K^*}(a^*)$ by Theorem~\ref{thm:eqthm}.
\end{proof}

Again using Lemma~\ref{lem:rrel}, we have the following special cases.
\begin{cor}
If $T_{\res}$ has quantifier elimination in $\ca L_{\res}$, then $T^{\dhl}_{\lift}$ has quantifier elimination in~$\ca L_{\lift}^{\dotrel{\prece},\pi}$.
\end{cor}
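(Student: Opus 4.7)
The plan is to combine Theorem~\ref{thm:relQE} with a straightforward elimination inside the special formulas appearing in \eqref{relQEform}, using the hypothesis that $T_{\res}$ has quantifier elimination in $\ca L_{\res}$.

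First, given any $\ca L_{\lift}^{\dotrel{\prece},\pi}$-formula $\varphi(x)$, I would apply Theorem~\ref{thm:relQE} to replace $\varphi(x)$ (modulo $T^{\dhl}_{\lift}$) by a disjunction $\bigvee_j (\theta_j(x) \wedge \varphi_j(x))$, where each $\theta_j(x)$ is quantifier-free $\ca L_{\res}^{\dotrel{\prece},\pi}$ (hence already quantifier-free $\ca L_{\lift}^{\dotrel{\prece},\pi}$) and each $\varphi_j(x)$ is special. So it suffices to show that every special formula is $T^{\dhl}_{\lift}$-equivalent to a quantifier-free $\ca L_{\lift}^{\dotrel{\prece},\pi}$-formula.

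Fix a special formula $\varphi_{\r}(\pi(\sigma_1(x), \tau_1(x)), \dots, \pi(\sigma_k(x), \tau_k(x)))$ with $\varphi_{\r}(u_1,\dots,u_k)$ an $\r$-relative formula. By induction on the construction of $\r$-relative formulas (paralleling Lemma~\ref{lem:rrel}), replacing each $\exists u\,(u \in \bm k \wedge \psi)$ by $\exists u\,\psi$ and each $\forall u\,(u \in \bm k \to \psi)$ by $\forall u\,\psi$ yields an $\ca L_{\res}$-formula $\varphi_{\r}^{*}(u_1,\dots,u_k)$ such that, for any $(K,\bm k) \models T^{\dhl}_{\lift}$ and any $b \in \bm k^k$,
\[
(K,\bm k) \models \varphi_{\r}(b)\ \iff\ \bm k \models \varphi_{\r}^{*}(b).
\]
The QE assumption on $T_{\res}$ then furnishes a quantifier-free $\ca L_{\res}$-formula $\chi(u_1,\dots,u_k)$ equivalent to $\varphi_{\r}^{*}$ modulo $T_{\res}$.

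Finally, since the values $\pi(\sigma_i(x), \tau_i(x))$ lie in $\bm k$ by definition of $\pi$ in any model of $T^{\dhl}_{\lift}$, substituting $u_i = \pi(\sigma_i(x), \tau_i(x))$ into the equivalence $\varphi_{\r}^{*} \leftrightarrow \chi$ shows that the special formula $\varphi_j(x)$ is $T^{\dhl}_{\lift}$-equivalent to $\chi(\pi(\sigma_1(x), \tau_1(x)), \dots, \pi(\sigma_k(x), \tau_k(x)))$, a quantifier-free $\ca L_{\res}^{\pi}$-formula, hence a quantifier-free $\ca L_{\lift}^{\dotrel{\prece},\pi}$-formula, completing the argument. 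There is no real obstacle here beyond the bookkeeping in the inductive translation from $\r$-relative formulas to $\ca L_{\res}$-formulas; all the substantive work sits inside Theorem~\ref{thm:relQE}.
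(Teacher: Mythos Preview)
Your proposal is correct and follows the same approach as the paper, which merely says ``Again using Lemma~\ref{lem:rrel}'' before stating the corollary. You have simply spelled out in detail the translation between $\r$-relative formulas and $\ca L_{\res}$-formulas implicit in that lemma, then applied the quantifier elimination hypothesis for $T_{\res}$ and substituted the $\pi$-terms back; this is exactly what the paper intends.
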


\begin{cor}
If $\ca L_{\res} = \ca L_{\OR,\der} \cup \{ \prece, \Uplambda_2, \Upomega_2 \}$, where $\Uplambda_2$, $\Upomega_2$ are the binary predicates from \cite[Chapter~16]{adamtt}, then the theory of transserial tame pairs has quantifier elimination in~$\ca L_{\lift}^{\dotrel{\prece},\pi}$.
\end{cor}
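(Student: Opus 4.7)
The plan is to reduce the statement directly to the immediately preceding corollary by specializing $T_{\res} := T^{\nl}_{\sm}$, and then transfer the resulting conclusion through the identification in Corollary~\ref{cor:tamepairequiv}. In other words, all of the work has essentially been done already; what remains is just to line up the languages and invoke an external QE input for $T^{\nl}_{\sm}$.

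First, I would cite the quantifier elimination theorem for $T^{\nl}_{\sm}$ from \cite[Chapter~16]{adamtt}, which asserts that $T^{\nl}_{\sm}$ admits QE in the language $\ca L_{\OR,\der} \cup \{\Uplambda_2, \Upomega_2\}$. This is strictly stronger than the model completeness of $T^{\nl}_{\sm}$ invoked earlier in this paper and is precisely the refinement for which the binary predicates $\Uplambda_2$, $\Upomega_2$ were introduced in \cite{adamtt}. Since every model of $T^{\nl}_{\sm}$ is Liouville closed and linearly surjective (by \cite[Corollary~14.2.2]{adamtt}, as already used in the proof of Corollary~\ref{cor:modcompTTP}), $T^{\nl}_{\sm}$ does extend the theory of ordered differential fields that are real closed, linearly surjective, and closed under exponential integration, so it meets the standing assumptions on $T_{\res}$ fixed at the start of Section~\ref{sec:AKErelQE}. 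The hypothesis of the preceding corollary is therefore satisfied with $T_{\res} := T^{\nl}_{\sm}$ and $\ca L_{\res} := \ca L_{\OR,\der} \cup \{\Uplambda_2, \Upomega_2\}$, yielding QE for $T^{\dhl}_{\lift}$ in $\ca L_{\lift}^{\dotrel{\prece},\pi}$ under these choices.

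Second, I would pass from $T^{\dhl}_{\lift}$ to the theory of transserial tame pairs through Corollary~\ref{cor:tamepairequiv}. Under that equivalence, a transserial tame pair $(K, L)$ is precisely a model $(K, \bm k) = (K, L)$ of $T^{\dhl}_{\lift}$ in which the lift $\bm k$ inside $\dot{\ca O} = \conv_K(L)$ is literally the smaller field $L$, and where $T_{\res}$ is the theory of $L$ regarded as an $\ca L_{\res}$-structure, namely $T^{\nl}_{\sm}$. Every symbol of $\ca L_{\lift}^{\dotrel{\prece},\pi}$ carries the same interpretation on both sides of this identification: $\bm k = L$, $\dotrel{\prece}$ is the asymptotic relation coming from $\dot{\ca O} = \conv_K(L)$, $\pi$ is the standard part map into $L$, and the symbols of $\ca L_{\res}$ including $\Uplambda_2, \Upomega_2$ are interpreted in the same way on $\bm k = L$. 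Hence the QE for $T^{\dhl}_{\lift}$ just established transfers verbatim to QE for the theory of transserial tame pairs.

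There is no substantive obstacle beyond tracking the languages; the technical weight is already carried by Theorem~\ref{thm:relQE}, by the preceding corollary extracted from it, and by the QE theorem for $T^{\nl}_{\sm}$ from \cite[Chapter~16]{adamtt}. The one point worth verifying (but which causes no trouble) is that $\dotrel{\prece}$ refers throughout to the coarsened valuation $\dot{\ca O} = \conv_K(L)$ rather than to the natural valuation of $K$ or of $L$, so that the handling of $\ca O_L$ quantifier-freely on $L$ is subsumed by the $\ca L_{\res}$-QE for $T^{\nl}_{\sm}$ via $\Uplambda_2, \Upomega_2$, and no additional valuation-theoretic symbol needs to be adjoined.
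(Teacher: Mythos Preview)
Your proposal is correct and follows essentially the same approach as the paper: cite the quantifier elimination for $T^{\nl}_{\sm}$ in the language $\ca L_{\OR,\der}\cup\{\Uplambda_2,\Upomega_2\}$ from \cite[Chapter~16]{adamtt}, then apply the preceding corollary with $T_{\res}=T^{\nl}_{\sm}$. The paper's proof is a single sentence invoking \cite[Theorem~16.0.1]{adamtt} and the surrounding comments; your version simply makes explicit the verification that $T^{\nl}_{\sm}$ meets the standing hypotheses on $T_{\res}$ and that the identification between transserial tame pairs and models of $T^{\dhl}_{\lift}$ via Corollary~\ref{cor:tamepairequiv} respects all the relevant symbols.
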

\begin{proof}
The expansion of $T^{\nl}_{\sm}$ by definitions for $\Uplambda_2$ and $\Upomega_2$ has quantifier elimination by \cite[Theorem~16.0.1]{adamtt} and the comments immediately afterwards and in the ``Notes and comments'' immediately before \cite[Section~16.1]{adamtt}.
\end{proof}

\begin{cor}
The $\ca L_{\lift}^{\dotrel{\prece},\pi}$-theory of transserial tame pairs is the model completion of the natural expansion of $T_{\pair,0}$ to an $\ca L_{\lift}^{\dotrel{\prece},\pi}$-theory, where $\ca L_{\res} = \ca L_{\OR,\der} \cup \{ \prece, \Uplambda_2, \Upomega_2 \}$.
\end{cor}

\begin{thm}\label{thm:TTPdistal}
The $\ca L_{\lift}^{\dotrel{\prece},\pi}$-theory of transserial tame pairs is distal \textnormal{(}hence has NIP\textnormal{)}, where $\ca L_{\res} = \ca L_{\OR,\der} \cup \{ \prece, \Uplambda_2, \Upomega_2 \}$.
\end{thm}
\begin{proof}
Let $(K, L)$ be a transserial tame pair.
As a tame pair of real closed closed fields, it is distal by \cite{hieronyminell}.
We are interested in $(K, L)$ as a structure in the language $\ca L_{\lift}^{\dotrel{\prece},\pi}$, with $\ca L_{\res} = \ca L_{\OR,\der} \cup \{ \prece, \Uplambda_2, \Upomega_2 \}$.
Without the derivation, this is an expansion by the definable sets $\dot{\prece}$ and $\prece$ and the two externally definable sets $\Uplambda_2$ and $\Upomega_2$, so is distal. 
It remains to use \cite[Proposition~7.1]{acgz-distal}, whose condition (3) is easy to check using the definition of a derivation, the quotient rule, and the fact that $\der$ commutes with the unary version of~$\pi$.
\end{proof}
Does distality or NIP always transfer from the $\ca L_{\res}$-theory $T_{\res}$ to the $\ca L_{\lift}^{\dotrel{\prece},\pi}$-theory $T^{\dhl}_{\lift}$?
If $T_{\res}$ has NIP, then by combining results from \cite{jahnkesimon-NIPhenselian} with well-known results for valued fields (without derivations), one can show that special formulas have NIP.
Hence by Theorem~\ref{thm:relQE}, the problem is reduced to showing that the quantifier-free $\ca L_{\lift}^{\dotrel{\prece},\pi}$-formulas have NIP.

\section{The case of hyperseries}\label{sec:hyperseries}

As mentioned in the introduction, the differential field $\HH$ of hyperseries was constructed as a field in \cite{hyperseries}, building on \cite{loghyperseries}, and as an elementary differential field extension of $\T$ in \cite{hyperseries-deriv}. 
Let $\On$ denote the class of ordinals.
Then $\HH$ contains for each $\alpha \in \On$ elements $\exp_{\alpha}(x)$ and $\log_{\alpha}(x)$ to be viewed as $\alpha$ iterates of the exponential and logarithm, respectively.
In particular, $\HH$ is a proper class, although the support of every element is a set, and contains formally transexponential elements such as $\exp_{\omega}(x)$.
These elements allow one to solve functional equations such as $E_{\omega}(x+1)=\exp E_{\omega}(x)$ in $\HH$ that cannot be solved in $\T$.
Nevertheless, as a first-order structure in the language $\ca L_{\OR,\der}^{\ca O}$, it is an elementary extension of~$\T$.

The class $\dot{\ca O} \coloneqq \conv_{\HH}(\T)$ consists exactly of those elements of $\HH$ bounded in absolute value by $\exp_n(x)$ for some $n$.
As explained in Section~\ref{sec:coarsenuncoarsen}, with $\ca O \coloneqq \conv_{\HH}(\R)$ and $\dot{\ca O}$, the structure $(\HH, \ca O, \dot{\ca O}) \models T^{\nl,\dhl}$ and $\T$ can be elementarily extended to $\T^* \subseteq \dot{\ca O}$ so that $(\HH, \T^*)$ is a transserial tame pair; if desired, we can take $\T^*$ to contain the differential field $\LL$ of logarithmic hyperseries from \cite{loghyperseries}.
In particular, all the results of Sections~\ref{sec:modcomp} and \ref{sec:AKErelQE} apply to the pair $(\HH, \T^*)$.
But the existence of such a $\T^*$ is obtained by Zorn, so in this section we provide a more explicit example of a $\T^*$ that works.

Let $\mf{M}$ be the monomial group of $\HH$ and $\mf{B} \coloneqq \mf{M}\cap\dot{\ca O}^{\x}$; that is, $\mf{B}$ is the subgroup of $\mf{M}$ consisting of all exponentially bounded monomials whose multiplicative inverse is exponentially bounded.
Note that $\log_{\alpha}(x) \in \mf{B}$ for all $\alpha \in \On$; in particular, $\mf{B}$ is a proper class.
Now let
\[
\T^*\ \coloneqq\ \{ f \in \HH : \supp f \subseteq \mf{B} \},
\]
where $\supp f$ denotes the support of the series $f \in \HH$.
Clearly, $\T^* \subseteq \dot{\ca O}$.
We also have $\LL \subseteq \dot{\ca O}$.

\begin{lem}
The structure $\T^*$ is a maximal subfield of~$\dot{\ca O}$.
\end{lem}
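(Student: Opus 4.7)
The plan is to verify the two assertions separately: first that $\T^*$ is a subfield of $\dot{\ca O}$, and then that no strictly larger subfield of $\dot{\ca O}$ exists. The argument exploits the Hahn-style series structure of $\HH$ and the fact that $\mf{B}$ is precisely the group of monomials that are $\dotrel{\asymp} 1$.

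For the subfield property, I would check closure under the field operations using standard Hahn-series arithmetic. Closure under addition is immediate from $\supp(f+g) \subseteq \supp f \cup \supp g \subseteq \mf{B}$, and closure under multiplication follows because $\mf{B}$ is a subgroup of $\mf{M}$, so every monomial in $\supp(fg)$ is a product of elements of $\mf{B}$ and hence lies in $\mf{B}$. For inversion, a nonzero $f \in \T^*$ can be written as $f = c\mf{m}(1+\varepsilon)$ where $\mf{m}$ is its leading monomial (hence in $\mf{B}$), $c \in \R^{\x}$, and $\varepsilon \prec 1$ with $\supp\varepsilon \subseteq \mf{B}$ (using again that $\mf{B}$ is a group); expanding $f^{-1} = c^{-1}\mf{m}^{-1}\sum_{n \ges 0}(-\varepsilon)^n$ yields an element of $\HH$ whose support lies in $\mf{m}^{-1}\cdot\mf{B} = \mf{B}$. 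Finally, $\T^* \subseteq \dot{\ca O}$ because any $f \in \T^*$ is asymptotic in the natural valuation to a constant multiple of its leading monomial, which lies in $\mf{B} \subseteq \dot{\ca O}$, and $\dot{\ca O}$ is convex.

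For maximality, suppose towards a contradiction that $\T^* \subsetneq F \subseteq \dot{\ca O}$ with $F$ a subfield, and pick $f \in F \setminus \T^*$. Since $\supp f$ is a reverse well-ordered set, we may split $f = f_1 + f_2$ with $\supp f_1 = \supp f \cap \mf{B}$ and $\supp f_2 = \supp f \setminus \mf{B}$. Then $f_1 \in \T^* \subseteq F$, so $f_2 = f - f_1 \in F$, and $f_2 \neq 0$ since $f \notin \T^*$. The key step, which I expect to require the most care, is to show $f_2^{-1} \notin \dot{\ca O}$, contradicting $f_2^{-1} \in F \subseteq \dot{\ca O}$. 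Let $\mf{n}$ be the leading monomial of $f_2$, so $\mf{n} \notin \mf{B}$ by construction. On the other hand, $f_2 \sim c\mf{n}$ in the natural valuation of $\HH$ for some $c \in \R^{\x}$, and since $f_2 \in \dot{\ca O}$ and $\dot{\ca O}$ is convex, this forces $\mf{n} \in \dot{\ca O}$. Combined with $\mf{n} \notin \mf{B} = \mf{M} \cap \dot{\ca O}^{\x}$, we obtain $\mf{n} \in \dot{\cao}$, so $\mf{n}^{-1} \notin \dot{\ca O}$; then $f_2^{-1} \sim c^{-1}\mf{n}^{-1}$ in the natural valuation, and the same convexity argument gives $f_2^{-1} \notin \dot{\ca O}$.

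The main obstacle is simply keeping the Hahn-series bookkeeping accurate in the hyperseries setting, in particular justifying convergence of the geometric series in the inversion step and the standard identity that the leading monomial of an inverse is the inverse of the leading monomial; these are routine in the framework of \cite{hyperseries}. The conceptual content of the argument is the identification of $\mf{B}$ as the subgroup of monomials $\mf{m}$ with $\mf{m} \dotrel{\asymp} 1$, which ensures that any element of $\dot{\ca O}$ outside $\T^*$ must involve a monomial of $\dot{\cao}$ whose reciprocal exits $\dot{\ca O}$.
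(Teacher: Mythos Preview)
Your proof is correct and follows essentially the same approach as the paper. The only minor difference is in the maximality step: the paper truncates $f$ at the largest monomial $\mf{n}\in\supp f$ with $\mf{n}\notin\mf{B}$ (an initial-segment truncation, so the subtracted piece $\sum_{\mf{m}\succ\mf{n}}f_{\mf{m}}\mf{m}$ is automatically in $\HH$), whereas you split off the entire $\mf{B}$-supported part of $f$, which tacitly uses that $\HH$ is closed under arbitrary subseries restrictions; both routes produce an element of $F$ with leading monomial $\mf{n}\in\dot{\cao}$ and then finish identically.
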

\begin{proof}
Given $f, g \in \HH$ with $\supp f \subseteq \mf{B}$ and $\supp g \subseteq \mf{B}$, it is easy to see that $\supp (f+g) \subseteq \mf{B}$ and $\supp (fg) \subseteq \mf{B}$, and one also checks by transfinite induction that if $f \neq 0$, then $\supp f^{-1} \subseteq \mf{B}$.
It follows from these facts together with the product rule and chain rule that $\T^*$ is a subfield of $\dot{\ca O}$ (as before, this means that $\T^*$ is a subring of $\dot{\ca O}$ that is itself a field).

Moreover, $\T^*$ is a maximal subfield of $\dot{\ca O}$.
To see this, suppose that we have a subfield $L \subseteq \dot{\ca O}$ with $\T^*\subsetneq L$ and take $f \in L \setminus \T^*$.
Let $\mf{m}$ range over $\supp f$ and take $\mf{n} \in \supp f$ with $\mf{n}^{-1} \notin \dot{\ca O}$ such that $\mf{m} \in \mf{B}$ for all $\mf{m}$ with $\mf{m} \succ \mf{n}$.
Then $f = \sum_{\mf{m}\succ \mf{n}} f_{\mf{m}}\mf{m} + f_{\mf{n}}\mf{n} + \sum_{\mf{m} \prec \mf{n}} f_{\mf{m}}\mf{m}$ and $\sum_{\mf{m}\succ \mf{n}} f_{\mf{m}}\mf{m} \in \T^*$, so we can arrange that $f \sim f_{\mf{n}}\mf{n}$.
But this yields $f^{-1} \sim f_{\mf{n}}^{-1}\mf{n}^{-1} \notin \dot{\ca O}$, contradicting that $L$ is a subfield of~$\dot{\ca O}$.
\end{proof}

The proof of the next lemma was supplied by Vincent Bagayoko. It involves technical notions that we define only in the context needed here.
\begin{lem}\label{lem:T*diffsubfield}
The field $\T^*$ is a differential subfield of~$\dot{\ca O}$.
\end{lem}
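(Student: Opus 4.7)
The plan is to prove $\der(\T^*) \subseteq \T^*$, which, combined with the previous lemma, yields the claim. Concretely, given $f \in \T^*$, we must show that every monomial in $\supp \der f$ lies in $\mf{B}$.

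First, since the derivation on $\HH$ is strongly linear with respect to the natural summability structure of $\HH$ (see \cite{hyperseries-deriv}), for $f = \sum_{\mf{m}\in\supp f} f_{\mf{m}} \mf{m} \in \T^*$ we have $\der f = \sum_{\mf{m}\in\supp f} f_{\mf{m}} \der \mf{m}$, and so $\supp \der f \subseteq \bigcup_{\mf{m}\in\supp f} \supp \der \mf{m}$. Because $\mf{B} = \mf{M} \cap \dot{\ca O}^{\x}$ is a subgroup of $\mf{M}$, it suffices to show $\supp \der \mf{m} \subseteq \mf{B}$ for each $\mf{m} \in \mf{B}$. Writing $\der \mf{m} = \mf{m}^\dagger \cdot \mf{m}$ and again using that $\mf{B}$ is a subgroup, the problem reduces further to showing $\supp \mf{m}^\dagger \subseteq \mf{B}$ for each $\mf{m} \in \mf{B}$.

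This last step is the real content. One direction is cheap: by Lemma~\ref{lem:largeTnlcoarsendhl}, $(\HH, \dot{\ca O})$ is a $\d$-Hensel-Liouville closed pre-$H$-field, so it has small derivation, giving $\der \dot{\ca O} \subseteq \dot{\ca O}$ and $\der \dot{\cao} \subseteq \dot{\cao}$; this forces $\mf{m}^\dagger \dotrel{\prece} 1$, so at least every monomial $\mf{n} \in \supp \mf{m}^\dagger$ satisfies $\mf{n} \in \dot{\ca O}$. What is not automatic is that $\mf{n}^{-1} \in \dot{\ca O}$ as well, i.e., that no transexponentially small monomial appears. Here one invokes the explicit description of the derivation on hypermonomials from \cite{hyperseries-deriv}: writing $\mf{m} = \exp(s)$ so that $\mf{m}^\dagger = s'$, the monomials occurring in $s'$ are built from hyperlogarithms attached to the components of $s$, and when $\mf{m} \in \mf{B}$ these hyperlogarithms all lie in $\mf{B}$.

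The main obstacle is the verification that the support of $\mf{m}^\dagger$ contains no transexponentially small monomial. This is a structural property of the derivation in hyperseries and depends on the formulas and summability arguments of \cite{hyperseries-deriv}, rather than on anything purely abstract about pre-$H$-fields; for that reason it is natural that Vincent Bagayoko's argument is invoked here.
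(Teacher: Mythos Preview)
Your reduction is sound and matches the paper's: strong linearity reduces the problem to showing $\supp \der\mf{m} \subseteq \mf{B}$ for each $\mf{m} \in \mf{B}$, and the paper stops there (it does not pass to $\mf{m}^\dagger$, though your further reduction is fine). Your observation that small derivation of $(\HH,\dot{\ca O})$ already forces $\supp\mf{m}^\dagger \subseteq \dot{\ca O}$ is correct and pleasant, though the paper does not isolate this half.

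The gap is the other half, and here your argument is not a proof. Writing $\mf{m}=\exp(s)$ so that $\mf{m}^\dagger=s'$ simply shifts the problem: computing $s'$ requires exactly the same control on supports of derivatives that you are trying to establish for $\mf{m}$, so the step is circular unless backed by an induction or an external structural result. Your final paragraph concedes this by deferring to unspecified ``formulas and summability arguments'' in \cite{hyperseries-deriv}; that is where the content lies, and you have not identified it.

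What the paper actually invokes is the notion of \emph{near-support} from \cite[Definition~1.14]{hyperseries-deriv}. Combining \cite[Proposition~6.14]{hyperseries-deriv} with \cite[Theorem~6.7]{hyperseries-deriv}, one obtains that $\{\prod_{1\les i\les n}\ell_{\gamma_i}^\dagger : \gamma_1,\dots,\gamma_n\in\On\}$ is a near-support of $\der$ on all of $\HH$. Concretely, this means: for $\mf{b}\in\mf{M}$ and $\mf{n}\in\supp\der(\mf{b})$ there exist $\gamma_1,\dots,\gamma_n\in\On$ and $\mf{m}\in\mf{M}$ with $\mf{m}\flatter\mf{b}$ and $\mf{n}=\mf{b}\,\mf{m}\prod_{i}\ell_{\gamma_i}^\dagger$. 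The flatness relation $\mf{m}\flatter\mf{b}$ (i.e.\ $(\max\{\mf{m},\mf{m}^{-1}\})^n<\max\{\mf{b},\mf{b}^{-1}\}$ for all $n$) is the key: when $\mf{b}\in\mf{B}$ it forces $\mf{m}\in\mf{B}$, and since each $\ell_{\gamma_i}^\dagger\in\mf{B}$, the product $\mf{n}$ lies in $\mf{B}$. This handles both directions at once and is precisely the ``structural property'' you allude to but do not supply.
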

\begin{proof}
Let $\LL$ be the differential field of logarithmic hyperseries and $\mf{L}$ its monomial group.
The (operator) support of $\der \colon \LL \to \LL$ is defined in \cite[Section~2.6]{loghyperseries} to be the smallest $\mf{S} \subseteq \mf{L}$ such that $\supp\der(\mf{l}) \subseteq \mf{S}\mf{l}$ for all $\mf{l} \in \mf{L}$ and shown to be $\{ \ell_{\gamma}^{\dagger} : \gamma \in \On \}$ in \cite[Lemma~3.1]{loghyperseries} and the subsequent remark.
This definition makes sense also for the extension of $\der$ to $\der \colon \HH \to \HH$, with $\mf{L}$ replaced by the monomial group $\mf{M}$ of $\HH$.
In \cite[Definition~1.14]{hyperseries-deriv}, the notion of the support of $\der$ is generalized to near-support, as we explain momentarily.
Then \cite[Proposition~6.14]{hyperseries-deriv} shows that $\{ \prod_{1 \les i \les n} \ell_{\gamma_i}^{\dagger} : \gamma_1, \dots, \gamma_{n} \in \On\}$ is a good near-support of $\der$ in $\LL$, where ``good'' means that it is a well-based subclass of $\ca O_{\LL}$ that is closed under finite products.
Goodness is used here only to apply \cite[Theorem~6.7]{hyperseries-deriv}, which gives that $\{ \prod_{1 \les i \les n} \ell_{\gamma_i}^{\dagger} : \gamma_1, \dots, \gamma_{n} \in \On\}$ remains a near-support of $\der \colon \HH \to \HH$.
This means that for $\mf{b} \in \mf{B}$ and $\mf{n} \in \supp \der(\mf{b})$, we have $\gamma_1, \dots, \gamma_n \in \On$ and $\mf{m} \in \mf{M}$ such that $\mf{m} \flatter \mf{b}$ and $\mf{n}=\mf{b}\mf{m}\prod_{1 \les i \les n}\ell_{\gamma_i}^{\dagger}$.
That $\mf{m} \flatter \mf{b}$ means $(\max\{ \mf{m},\mf{m}^{-1} \})^n < \max\{\mf{b},\mf{b}^{-1}\}$ for all $n$, so in particular $\mf{m} \in \mf{B}$.
Thus $\mf{n} \in \mf{B}$, so $\der(\mf{B})\subseteq \T^*$.
It follows that $\T^*$ is closed under~$\der$.
\end{proof}

\begin{prop}
The structure $(\HH, \T^*)$ is a transserial tame pair.
\end{prop}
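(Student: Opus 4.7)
The plan is to reduce to Corollary~\ref{cor:tamepairequiv}. By Lemma~\ref{lem:largeTnlcoarsendhl} we already have $(\HH, \ca O, \dot{\ca O}) \models T^{\nl,\dhl}$, and the two preceding lemmas show that $\T^*$ is a differential subfield of $\dot{\ca O}$, hence of $\HH$. In view of that corollary, it suffices to verify that $\T^*$ is a lift of $\res(\HH, \dot{\ca O})$ inside $\dot{\ca O}$; equivalently, that for every $a \in \dot{\ca O}^{\times}$ there exists $u \in (\T^*)^{\times}$ with $a \dotrel{\sim} u$.

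To produce such a $u$, I would exploit the Hahn-series structure of $\HH$ over its monomial group $\mf{M}$ and split the support of $a$ according to the coarsened valuation. The key observation is that $a \dotrel{\asymp} 1$ forces the dominant monomial of $a$ to lie in $\mf{B}$, and hence every $\mf{m} \in \supp a$ satisfies $\mf{m} \dotrel{\preceq} 1$, i.e., lies either in $\mf{B}$ or in $\{ \mf{n} \in \mf{M} : \mf{n} \dotrel{\prec} 1\}$. Setting
\[
u\ :=\ \sum_{\mf{m} \in \mf{B}} a_{\mf{m}}\mf{m}, \qquad \varepsilon\ :=\ \sum_{\mf{m} \dotrel{\prec} 1} a_{\mf{m}}\mf{m},
\]
both are well-defined elements of $\HH$ since subsets of well-based sets are well-based. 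Then $u \in \T^*$ by construction, the dominant monomial of $u$ coincides with that of $a$ and lies in $\mf{B}$ (so $u \dotrel{\asymp} a \dotrel{\asymp} 1$, and in particular $u \neq 0$), and $\varepsilon \in \dot{\cao}$ because every monomial in its support is $\dotrel{\prec} 1$. Thus $a = u + \varepsilon$ witnesses $a \dotrel{\sim} u$.

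The main obstacle is conceptually minor but requires care: it is the comparison between the natural and coarsened valuations encoded in the inclusion $\supp a \subseteq \{\mf{m} \in \mf{M} : \mf{m} \dotrel{\preceq} 1\}$, which depends on the fact that $a$ is asymptotic in the natural valuation to its dominant monomial and that the natural valuation refines the coarsening. Once this inclusion is in hand, the rest is a purely formal manipulation of supports in the Hahn-series framework, with no further analytic input needed beyond the definition of $\mf{B}$ and the description of $\dot{\cao}$ already recorded after Lemma~\ref{lem:TTPcoarsenTdhl}.
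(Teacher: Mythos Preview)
Your argument is correct. Both you and the paper reduce to Corollary~\ref{cor:tamepairequiv} via the fact that $\T^*$ is a lift of $\res(\HH,\dot{\ca O})$, but you verify this differently. The paper argues abstractly: since $\T^*$ is a \emph{maximal} differential subfield of $\dot{\ca O}$ and Proposition~\ref{adh:7.1.3} says any differential subfield of $\dot{\ca O}$ extends to a lift, $\T^*$ must already be a lift. You instead verify the lift property directly by splitting the support of $a\in\dot{\ca O}^{\times}$ along $\mf B$ versus $\{\mf m\dotrel{\prec}1\}$, explicitly producing the standard part $u\in\T^*$. Your route is more constructive and in fact does not need the maximality statement at all (only that $\T^*$ is a differential subfield); the paper's route is shorter on the page because the support-splitting work has already been done inside the proof of the maximality lemma, and Proposition~\ref{adh:7.1.3} then converts maximality into the lift property for free.
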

\begin{proof}
Since $\T^*$ is a maximal differential subfield of $\dot{\ca O}$, it maps isomorphically as a differential field onto $\res(\HH,\dot{\ca O})$ under the residue map by Proposition~\ref{adh:7.1.3}.
In particular, $\T^* \models T^{\nl}_{\sm}$ and $(\HH, \T^*)$ is a transserial tame pair by Corollary~\ref{cor:tamepairequiv}.
\end{proof}

\begin{cor}
The structures $(\HH, \T^*, \ca O_{\T^*}, \dot{\ca O})$ and $(\HH, \ca O_{\HH}, \dot{\ca O})$ are model complete in their respective languages.
\end{cor}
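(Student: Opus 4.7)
The plan is to observe that both assertions follow directly from the general results of Section~\ref{sec:modcomp} applied to the transserial tame pair $(\HH, \T^*)$ just constructed. No new algebraic work is required; everything reduces to identifying the languages correctly.

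For the first structure, I would apply Corollary~\ref{cor:modcompTTP} taking $\ca L_{\res} = \ca L_{\OR,\der} \cup \{\ca O\}$, so that $\ca L_{\lift}^{\dot{\ca O}} = \ca L_{\OR,\der} \cup \{\ca O, \bm k, \dot{\ca O}\}$. In the transserial tame pair $(\HH, \T^*)$, the predicate $\bm k$ is interpreted as $\T^*$, the predicate $\ca O$ is interpreted as $\ca O_{\T^*} = \conv_{\T^*}(C_{\T^*})$, and the predicate $\dot{\ca O}$ is interpreted as $\conv_{\HH}(\T^*)$. Since $\T \subseteq \T^* \subseteq \dot{\ca O} = \conv_{\HH}(\T)$, we have $\conv_{\HH}(\T^*) = \dot{\ca O}$, matching the notation in the statement. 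Corollary~\ref{cor:modcompTTP} then gives model completeness of the theory of $(\HH, \T^*, \ca O_{\T^*}, \dot{\ca O})$ in its language.

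For the second structure, I would first invoke Lemma~\ref{lem:TTPcoarsenTdhl} applied to the transserial tame pair $(\HH, \T^*)$, which yields $(\HH, \ca O_{\HH}, \dot{\ca O}) \models T^{\nl,\dhl}$, again using $\conv_{\HH}(\T^*) = \dot{\ca O}$. Then Theorem~\ref{thm:modcompTnldhl} asserts that $T^{\nl,\dhl}$ is model complete in $\ca L_{\OR,\der} \cup \{\ca O, \dot{\ca O}\}$, so the theory of $(\HH, \ca O_{\HH}, \dot{\ca O})$ is model complete.

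There is no real obstacle here; the only subtlety worth flagging is the verification that the convex hull of the explicitly constructed $\T^*$ coincides with $\dot{\ca O} = \conv_{\HH}(\T)$, which is immediate from the inclusions $\T \subseteq \T^* \subseteq \dot{\ca O}$. This corollary is essentially a summary statement recording that the abstract machinery of Sections~\ref{sec:coarsenuncoarsen} and~\ref{sec:modcomp} applies concretely to hyperseries equipped with the explicit lift constructed in this section.
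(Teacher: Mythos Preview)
Your proposal is correct and matches the paper's approach exactly: the paper's proof simply says ``These follow from Corollary~\ref{cor:modcompTTP} and Theorem~\ref{thm:modcompTnldhl} by the above,'' and you have spelled out precisely what that means, including the small check that $\conv_{\HH}(\T^*) = \dot{\ca O}$.
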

\begin{proof}
These follow from Corollary~\ref{cor:modcompTTP} and Theorem~\ref{thm:modcompTnldhl} by the above. 
\end{proof}

Perhaps a similar argument can be carried out in the field of surreal numbers equipped with the derivation from \cite{bm-surreals}, into which the differential field $\T$ can be elementarily embedded by \cite{adh-surreals}.

\section*{Acknowledgements}
Thanks are due to Lou van den Dries for suggesting the topic of transserial tame pairs and for a helpful conversation.
Thanks are due to Vincent Bagayoko for a helpful conversation concerning Section~\ref{sec:hyperseries}, particularly providing the proof of Lemma~\ref{lem:T*diffsubfield}.
Thanks are due to the anonymous referee for carefully reading the paper and suggesting many improvements to the exposition.

This material is based upon work supported by the National Science Foundation under Grant No.\ DMS-2154086.
This research was funded in whole or in part by the Austrian Science Fund (FWF) 10.55776/ESP450. For open access purposes, the author has applied a CC BY public copyright licence to any author accepted manuscript version arising from this submission.

\printbibliography

\end{document}